\newtheorem{thm}{Theorem}[section]
\newtheorem{lem}[thm]{Lemma}
\newtheorem{cor}[thm]{Corollary}
\newtheorem{prop}[thm]{Proposition}
\newtheorem{defn}[thm]{Definition}
\newtheorem{rem}[thm]{Remark}
\def\Re{{\rm Re}\,}
\def\Im{{\rm Im}\,}
\def\R{\mathbb{R}}
\def\les{\lesssim}
\def\C{{\mathbb C}}
\def\2q{{\frac{2}{|B|}}}
\newcommand{\N}{\mathbb{N}}
\newcommand{\bmo}{\mathrm{bmo}}
\newcommand{\Z}{\mathbb{Z}}
\newcommand{\newsigma}{\widetilde{\sigma}}
\newcommand{\supp}{\mbox{supp}\,}
\renewcommand{\S}{{\mathcal{S}}}
\renewcommand{\leq}{\leqslant}
\renewcommand{\geq}{\geqslant}
\def\bra#1{{\langle{#1}\rangle}}
\newcommand{\phase}{\varphi}
\definecolor{red}{RGB}{255,0,0}
\definecolor{RED}{RGB}{255,0,0}
\definecolor{blue}{RGB}{0,0,255}
\definecolor{BLUE}{RGB}{0,0,255}
\newcommand{\abs}[1]{\left|#1\right|}
\newcommand{\set}[1]{\left\{#1\right\}}
\newcommand{\brkt}[1]{\left(#1\right)}
\newcommand{\dd}{\,\mathrm{d}}
\newcommand{\ddd}{\,\text{\rm{\mbox{\dj}}}}
\renewcommand{\d}{\partial}
\newcommand{\norm}[1]{\left\Vert#1\right\Vert}
\newcommand{\p}[1]{\langle{#1}\rangle}
\begin{document}

\title[A Seeger-Sogge-Stein theorem for bilinear Fourier integral operators] {A Seeger-Sogge-Stein theorem for bilinear Fourier integral operators}
\author[S.~Rodr\'iguez-L\'opez]{Salvador Rodr\'iguez-L\'opez}
\address{Department of Mathematics, Uppsala University, Uppsala, SE 75106,  Sweden}
\email{salvador@math.uu.se}
\author[D.~Rule]{David Rule}
\address{Mathematics Institute, Link\"oping University, 581 83 Link\"oping, Sweden}
\email{david.rule@liu.se}
\author[W.~Staubach]{Wolfgang Staubach}
\address{Department of Mathematics, Uppsala University, Uppsala, SE 75106,  Sweden}
\email{wulf@math.uu.se}
%\date{ }
\thanks{The first author has been partially supported by the Grant MTM2010-14946. The third author is partially supported by a grant from the Crawfoord foundation}

\subjclass[2000]{35S30, 42B20, 42B99.}

\begin{abstract}
We establish the regularity of bilinear Fourier integral operators with bilinear amplitudes in $S^m_{1,0} (n,2)$ and non-degenerate phase functions, from $L^p \times L^q \to L^r$ under the assumptions that $m\leq  -(n-1)(|\frac{1}{p}-\frac{1}{2}|+|\frac{1}{q}-\frac{1}{2}|)$ and $\frac{1}{p}+\frac{1}{q}=\frac{1}{r}$. This is a bilinear version of the classical theorem of Seeger-Sogge-Stein concerning the $L^p$ boundedness of linear Fourier integral operators. Moreover, our result goes beyond the aforementioned theorem in that it also includes the case of non-Banach target spaces.
\end{abstract}

\maketitle

\section{Introduction}
Given a function $a(x,\xi)\in \mathcal{C}^{\infty}(\R^n\times\R^n)$ with compact support in the $x$-variable, satisfying the estimate
\begin{equation}\label{standard linear symbols}
\begin{aligned}
& |\partial_{\xi}^{\alpha}\partial_{x}^{\beta}a(x,\xi)|\leq C_{\alpha,\beta} (1+|\xi|)^{m-|\alpha|} \\
\end{aligned}
 \end{equation}
for $m \in \R$, and each multi-index $\alpha,\beta$, and given $\phase(x,\xi)\in\mathcal{C}^{\infty}(\R^n\times(\R^n \setminus\{0\}))$, positively homogeneous of degree one in the $\xi$ variable with
$\det\,\d^{2}_{x, \xi} \phase(x,\xi)\neq 0$ on the support of $a(x,\xi)$, L. H\"ormander \cite{H2} defined a {\it{Fourier integral operator}} (FIO for short) $T^{\phase}_{a}$ acting a-priori on Schwartz class functions $f$, by setting

\begin{equation}\label{defn: lin FIO}
T^{\phase}_{a}f(x) = \int a(x,\xi)\widehat{f}(\xi)e^{i\phase(x,\xi)} \ddd\xi.
\end{equation}
Here
$\ddd$ is Lebesgue measure normalised by the factor $(2\pi)^{-n}$ and
\[
        \widehat{f}(\xi)=\int f(x)e^{-i x.\xi}\, \dd x
\]
is the Fourier transform of $f$. The function $a \colon \R^n\times\R^n \to \C$ is called the {\it{amplitude}} of $T^{\phase}_{a}$ and $\phase\colon \R^n\times (\R^n\setminus\{0\}) \to \R$ is called the {\it{phase function}}.

The study of the regularity (i.e. boundedness) of local FIOs in $L^p$ spaces for $1<p<\infty$ goes back, in the case of $p=2$, to the pioneering work of H\"ormander where the $L^2$ boundedness was proven when the amplitude verifies \eqref{standard linear symbols} with $m=0$. The optimal $L^p$ boundedness of FIOs is due to A. Seeger, C. Sogge and E. Stein \cite{SSS} who showed that the operator is $L^p$-bounded for $1<p<\infty$ provided that
$m\leq (n-1)|\frac{1}{p}-\frac{1}{2}|$ in \eqref{standard linear symbols}.\\

The main goal of this paper is to prove the bilinear analogue of the aforementioned result of Seeger-Sogge-Stein. To this end, let $a(x,\xi, \eta)$ be a smooth function with compact support in the $x$ variable satisfying the estimate
\begin{equation}\label{standard symbols}
\begin{aligned}
& |\partial_{\xi}^{\alpha}\partial_{\eta}^{\beta}\partial_{x}^{\gamma} \sigma(x,\xi,\eta)|\leq C_{\alpha,\beta, \gamma} (1+|\xi|+|\eta|)^{m-|\alpha|-|\beta|} \\
\end{aligned}
\end{equation}
for $m \in \R$ and each triple of multi-indices $\alpha, \beta$ and $\gamma$. Let the phase functions $\phase_j (x, \xi) \in \mathcal{C}^\infty(\R^n \times (\R^n\setminus\{0\}))$ be homogeneous of degree one in the frequency variable $\xi$, and verify the {\it{non-degeneracy condition}}
\begin{equation} \label{intro nondeg}
\det\,\d^{2}_{x, \xi} \phase_{j}(x,\xi)\neq 0
\end{equation}
for $j=1,2$. Our goal is to prove that
\begin{equation}\label{intro defn RS FIO}
T^{\phase_1, \phase_2}_{a}(f,g)(x) = \iint a(x,\xi,\eta)\widehat{f}(\xi)\widehat{g}(\eta)e^{i\phase_1(x,\xi)+i\phase_2(x,\eta)} \ddd\xi \ddd\eta,
\end{equation}
defines a bounded bilinear operator on a product of $L^p$ or $H^p$ spaces were the target space could either be a Banach or a non-Banach $L^p$ space, whenever the phases verify the same conditions as in the Seeger-Sogge-Stein theorem. Therefore, our theorem is a bilinear version of this result. This can also be seen as an extension of the results of R. Coifman and Y. Meyer \cite{CM4}, and L. Grafakos and R.Torres for bilinear pseudodifferential operators, which can be regarded as particular examples of operators of the type  \eqref{intro defn RS FIO} with linear phases $\phi_j(x,\xi) = x\cdot\xi$.
\\

We shall briefly review what is known about the boundedness of bilinear FIOs of the type \eqref{intro defn RS FIO}.  The first result in this direction was proven by L. Grafakos and M. Peloso in \cite{GP} where the authors showed the boundedness of $T^{\phase_1, \phase_2}_{a}$ from $L^p \times L^q \to L^r$ under the assumptions that $m< -(n-1)(|\frac{1}{p}-\frac{1}{2}|+|\frac{1}{q}-\frac{1}{2}|)$, $\frac{1}{p}+\frac{1}{q}=\frac{1}{r}$ and $1\leq p,q\leq 2$. In \cite{RS}, S. Rodr\'iguez-L\'opez and W. Staubach extended this theorem to the full range of exponents $1\leq p,q\leq \infty$. The main result of this paper, namely Theorem \ref{main}, extends this boundedness to the endpoint
$m= -(n-1)(|\frac{1}{p}-\frac{1}{2}|+|\frac{1}{q}-\frac{1}{2}|)$.\\

It turns out that proving our main theorem is a technical task and requires some heavy machinery from various parts of harmonic analysis. To clarify why the situation becomes complicated, one could consider operators with phases $\phase_j(x,\xi) = x\cdot\xi + \lambda_j(x,\xi)$ with $\lambda_j(x,\xi)\in\mathcal{C}^{\infty}(\R^n \times\R^n\setminus \{0\} )$, positively homogeneous of degree 1 in $\xi$. We will illustrate here that operators with such phases and amplitudes satisfying \eqref{standard symbols} cannot easily be understood with previously available techniques without slightly artificial additional assumptions which will require more decay from the symbol and rather un-natural restrictions on the phase functions. Following the freezing of the variables argument of Coifman-Meyer (see \cite{CM4} page 157), one introduces the function
\[
F(x,y)= \chi(x) \chi(y)\iint e^{i\lambda_1(y,\xi)+i\lambda_2(y,\eta)}\,\sigma(x,\xi,\eta)\widehat{f}(\xi)\widehat{g}(\eta)e^{ix\cdot(\xi +\eta)}\, \ddd\xi \ddd\eta \]
where $\chi(x)$ is a smooth cut-off function. Using Sobolev's embedding theorem we have that $|F(x,x)| \lesssim\sum_{|\alpha|\leq n+1} \int |\partial^{\alpha}_y F(x,y)|\, dy$ and the desired $L^p \times L^q \to L^r$ (say for $r\geq 1$) boundedness of the Fourier integral operator with phases $x\cdot\xi + \lambda_j(x,\xi)$ would follow by estimating the $L^r$ norm of $F(x,x)$ with an upper bound in terms of the $L^p$ and $L^q$ norms of $f$ and $g$ repectively. Given the smoothness and the compact support in $y$, this in turn amounts to obtaining the aforementioned upper bound for $\Vert \partial^{\alpha}_y F(x,y) \Vert _{L^r_{x}}$.
Now using Leibniz's rule and the chain rule, matters reduce to the study of $L^r_{x}$ boundedness of operators of the form
\begin{equation}\label{type 1 operator}
\begin{split}
\chi(x) \partial^{\alpha_1}_{y}\chi(y) \iint [\partial^{\alpha_2}_{y}(\lambda_1(y,\xi)+\lambda_2(y,\eta))]^k\, \sigma(x,\xi,\eta)\, e^{i\lambda_1(y,\xi)+i\lambda_2(y,\eta)}\times \\ \hat{f}(\xi)\, \hat{g}(\eta)e^{ix\cdot(\xi +\eta)}\, \ddd\xi \ddd\eta,
\end{split}
\end{equation}
where $k\in \mathbb{Z}_{+}$ and $|\alpha_2|\geq 1$.
The operator in \eqref{type 1 operator} is a bilinear pseudodifferential operator with symbol
$$a_{y}(x,\xi,\eta):= \chi(x) \partial^{\alpha_1}_{y}\chi(y)[\partial^{\alpha_2}_{y}(\lambda_1(y,\xi)+\lambda_2(y,\eta))]^k\, \sigma(x,\xi,\eta)\, e^{i\lambda_1(y,\xi)+i\lambda_2(y,\eta)},$$
where $y$ is considered as a parameter. If for $m\leq -n\max \big\{\frac{1}{2},\frac{1}{p}, \frac{1}{q}, 1-\frac{1}{r},\frac{1}{r}-\frac{1}{2}\big\}$ one has the estimates
\begin{align*}
\sup_{x,\xi, \eta}|\partial_{x}^{\alpha} \partial^{\beta}_{\xi} \partial^{\gamma}_{\eta}  \sigma (x, \xi, \eta)| &\leq C_{\alpha,\beta, \gamma}(1+|\xi|+|\eta|)^{m}, \, \text{for all}\,\, \alpha\,, \beta \,\text{and}\, \gamma \\
\sup_{x, \xi} |\partial_{x}^{\alpha} \partial^{\beta}_{\xi} \lambda_{j} (x, \xi)|& \leq C_{\alpha,\beta},\, \text{for all}\, |\alpha|\geq 1\,, \beta\,\text{and}\, \gamma,
  \end{align*}
then in light of a recent result of A. Miyachi and N. Tomita \cite{MT}, the $L^p \times L^q \to L^r$ boundedness with $\frac{1}{p}+\frac{1}{q}=\frac{1}{r}$ is indeed valid. But if we for instance and for the sake of comparison consider the case of the $L^2 \times L^2 \to L^1$ boundedness for operators that satisfy \eqref{standard symbols} with $m=0$, then the conditions above on the phase $\lambda_j$ are not in general enough to yield the desired boundedness. However, in this paper we are able to apply our $L^2 \times L^2 \to L^1$ result to operators with amplitudes satisfying \eqref{standard symbols} with $m=0$ and phases that are for example of the form
\[
	\varphi_j(x,\xi)=x\cdot \xi+ (u\cdot \xi) \bra{x}, \qquad \mbox{$u\in \R^n$ and $\abs{u}<1$},
\]
where the notation $\langle \cdot\rangle$ stands for $(1+|\cdot|^2)^{1/2}$. Observe that that if we set $\lambda_j(x,\xi)=(u\cdot \xi) \bra{x}$ then it is clear that estimate $|\nabla_x\lambda_j (x, \xi)|\leq C$ is not valid and so even if the amplitude of the operator did satisfy \eqref{standard symbols} with $m=\frac{-n}{2}$, the discussion above doesn't apply to the corresponding bilinear operator in order to prove the $L^2 \times L^2 \to L^1$ boundedness. An attempt to eliminate the growth in $x$ by replacing $\bra{x}$ in the example above would not always reduce the operator to one which could be treated by the bilinear pseudodifferential theory. For example, consider an operator in dimension one given by
\[
	T_\varepsilon(f,g)(x)=\iint_{\R^2} a(x,\xi,\eta)e^{i\varepsilon \xi\sin x} \widehat{f}(\xi)\widehat{g}(\eta)e^{ix (\xi+\eta)}\ddd \xi\ddd \eta \quad {\text{for $\varepsilon\in (0,1)$.}}
\]
Once again, viewing this operator as a bilinear pseudodifferential operator, we observe that it has the symbol $a(x,\xi,\eta)e^{i\varepsilon \xi\sin x}$ which at best belongs to the class $S^0_{0,1}(1,2)$ (again see Definition \ref{defn of hormander symbols} below). This is not a favourable class of symbols to study, even for linear operators. Moreover setting $\lambda(x, \xi)= \varepsilon \xi\sin x,$ we still will not have the boundedness of $\partial _x \lambda$ and therefore not even the boundedness result sketched above could be applied to this case. But once again, our result applies in this case and yields the $L^2 \times L^2 \to L^1$ regularity.\\

To show the $L^p \times L^q \to L^r$  boundedness of bilinear FIOs we shall prove the boundedness on $L^2 \times L^2 \to L^1$, $L^2 \times L^\infty \to L^2$, $L^\infty \times L^2 \to L^2$, $L^\infty \times L^\infty \to \mathrm{BMO}$, $H^1 \times L^\infty \to L^1$, $L^\infty \times H^1 \to L^1$, $L^2 \times H^1\to L^\frac{2}{3}$, $H^1\times L^2 \to L^\frac{2}{3}$ and finally $H^1 \times H^1 \to L^{\frac{1}{2}}$. In the investigation of each case, there are certain technical difficulties that need to be overcome, and in tackling these we have taken a more abstract attitude to proving propositions and lemmas that could be useful even outside the context of the specific case. Bilinear interpolation yields then the desired result for the full range of exponents $p$ and $q$.\\

In proving these boundedness results, we start by decomposing the operator into low and high frequency parts. In Section \ref{sec:low_frequency}, we analyse the problem when the operators are localized in one of the frequency variables (see Theorem \ref{main_low}).

Then in Section \ref{sect:high}, in order to study the high frequency part of the operator when the target space is Banach,  we will introduce a continuous version of the symbol decomposition for bilinear paraproducts due to A. Calder\'on and Coifman-Meyer, which we have adapted to the study of bilinear FIOs.

Thereafter, using commutators and our parameter dependant composition formula for pseudodifferential and Fourier integral operators (Theorem \ref{left composition with pseudo} below), we reduce the study of bilinear FIOs to the analysis of main terms and error terms.

 The error terms of the commutators can  be mainly handled by applying known results concerning boundedness of linear FIOs. The analyses of the main terms, however, need to be carried out as separate case studies, where in some cases it behoves us to assume that the dimension of the space is at least two.

In the case of $L^2 \times L^2 \to L^1$, the main term can be reduced to a composition of a bilinear paraproduct and two linear FIOs. For the $L^2 \times L^\infty \to L^2$, $L^\infty \times L^2 \to L^2$ cases we use kernel estimates, quadratic estimates and Carleson measure estimates. For $L^\infty \times L^\infty \to \mathrm{BMO}$, we use once again Carleson measure estimates, the Fefferman-Stein theory of real Hardy spaces and the Stein-Weiss theory of harmonic functions on $\R^{n+1}_{+}$. In the cases of $H^1 \times L^\infty \to L^1$ and $L^\infty \times H^1 \to L^1$, we use Goldberg's theory of local Hardy and bmo spaces. These are all carrried out in subsection \ref{sec:Banach_boundedness_result}.\\

It is rather surprising that some of the endpoint results above actually fail in dimension one (in which case the order of the amplitude is $m=0$). Indeed in Section \ref{sect:1d}, we provide counterexamples to the $L^\infty \times L^\infty \to \mathrm{BMO}$, $L^\infty \times H^1 \to L^1$ and $L^\infty \times L^p \to L^p$ ($1<p<\infty$) boundedness of one dimensional bilinear FIOs. This means that,  for the order $m<0$, the one dimensional results in \cite{RS} concerning the boundedness in $L^\infty \times L^\infty \to L^\infty$ and $L^\infty \times L^1 \to L^1$ are sharp.\\

For the cases $L^2 \times H^1\to L^\frac{2}{3}$, $H^1\times L^2 \to L^\frac{2}{3}$ and $H^1 \times H^1 \to L^{\frac{1}{2}}$, we are outside the realm of Banach spaces, and therefore we will need to use a discrete decomposition of the operator.  Thereafter we use Peetre's maximal function, the Fefferman-Stein vector-valued maximal function estimates and the theory of Triebel-Lizorkin spaces. It is worth mentioning that the method we use to prove the first two boundedness results does not rely on any atomic decomposition of $H^1$ which is the standard approach in establishing regularity in Hardy spaces. These non-Banach results are established in a more general setting in Section \ref{sect:non_banach}. \\

Finally, Section \ref{sec:proof_of_asymptotic} is devoted to the proof of Theorem \ref{left composition with pseudo}, concerning composition of parameter dependant pseudodifferential and Fourier integral operators.
\section{Main results}

Here we recall some definitions and tools from the linear and bilinear theory of Fourier integral operators which will be needed in what follows. The basic definition of amplitudes and symbols goes back to H\"ormander \cite{H1}.

    \begin{defn}\label{defn of hormander symbols}
Let $m\in \R$, $0\leq \delta \leq 1$, $0\leq \rho\leq 1$ and $d\in\{1,2\}$. A function $\sigma \in \mathcal{C}^{\infty}(\R^{n} \times\R^{dn})$ belongs to the class $S^{m}_{\rho,\delta }(n,d)$, if for all multi-indices $\alpha$ and $\beta$, there exist constants $C_{\alpha,\beta}$ such that
   \begin{align*}
      |\partial_{\Xi}^{\alpha}\partial_{x}^{\beta}\sigma(x,\Xi)| \leq C_{\alpha,\beta} \langle\Xi\rangle^{m-\rho\vert \alpha\vert + \delta  |\beta|}, \qquad \text{for all $(x,\Xi)\in \R^n\times \R^{dn}$}.
   \end{align*}
\end{defn}
In the sequel, we shall use the notation $S^m_{\rho,\delta }$ instead of  $S^{m}_{\rho,\delta }(n,1)$. These are the most common classes of amplitudes.

\begin{defn} A function $\phase(x,\xi)\in C^\infty\brkt{\R^n\times (\R^n\setminus \{0\})}$ is called a non-degenerate phase function if it is real-valued, positively homogeneous of degree one in $\xi$ and for $\xi\neq 0$ verifies
\[
	\det \brkt{\frac{\d^2 \phase}{\d x \d\xi}}\neq 0,
\]
for $x$ in the spatial support of the amplitude.
\end{defn}

\begin{defn}
A \emph{linear Fourier integral operator} $T^{\phase}_{\sigma}$ is an operator which is defined to act on a Schwartz function $f$ by the formula
      \begin{equation} \label{eq:Linear_FIO}
      	T_\sigma^{\varphi}f(x)= \int_{\R^n} e^{i\phase(x,\xi)} \sigma(x,\xi) \widehat{f}(\xi) \, \ddd\xi,
      \end{equation}
where
$\ddd$ is Lebesgue measure normalised by the factor $(2\pi)^{-n}$ and
\[
	\widehat{f}(\xi)=\int f(x)e^{-i x.\xi}\, \dd x
\]
is the Fourier transform of $f$.
\end{defn}
We shall also deal with the class $L^{\infty}S^m_{\varrho}$ of rough symbols/amplitudes introduced by C. Kenig and W. Staubach in \cite{KS}.
\begin{defn}\label{defn of amplitudes}
   Let $m\in \R$ and $0\leq\varrho \leq 1$. A function $a(x,\xi)$ which is smooth in the frequency variable $\xi$ and bounded
   measurable in the spatial variable $x$, belongs to the symbol class $L^{\infty}S^{m}_{\varrho}$, if for all multi-indices $\alpha$
   it satisfies
   \begin{align*}
      \sup_{\xi \in \R^n} \langle \xi\rangle ^{-m+\varrho\vert \alpha\vert}
      \Vert \partial_{\xi}^{\alpha}a(\cdot\,,\xi)\Vert_{L^{\infty}(\R^{n})}< +\infty.
   \end{align*}
\end{defn}
In the following theorem we collect the boundedness results for linear FIOs which will be used throughout the paper.
In what follows, we shall use the notation
\[
	a(tD)f(x):=\int_{\R^n} a(t\xi) \widehat{f}(\xi)e^{ix\cdot \xi} \, \ddd\xi,
\]
for $t>0$ and $a$ in a suitable symbol class. Whenever $t=1$ we shall simply write $a(D)$.

Given a bump function $\widehat{\Theta}$  supported in a ball near the origin, such that $\widehat\Theta(0)=1$,  the Hardy space $H^p$ is the class of tempered distributions $f$ such that
\begin{equation*}\label{eq:H^p}
	\norm{f}_{H^p}=\brkt{\int \sup_{t>0} \abs{\widehat{\Theta}(tD) f(x)}^p\dd x}^{\frac 1 p}<+\infty.
\end{equation*}
The local Hardy space $h^p$ for $0<p<\infty$,  is the space of tempered distributions $f$ such that
\begin{equation}\label{eq:h1}
	\norm{f}_{h^p}=\brkt{\int \sup_{0<t<1} \abs{\widehat{\Theta}(tD) f(x)}^p\dd x}^{\frac 1 p}<+\infty.
\end{equation}
We should also mention that these definitions do not depend on the choice of $\Theta$.
It is known (see e.g. \cites{S,FS}) that
\[
	\norm{f}_{H^p}\thickapprox \brkt{\int \sup_{t>0}\sup_{\abs{x-y}<t} \abs{\widehat{\Theta}(tD) f(y)}^p\dd x}^{\frac{1}{p}}.
\]
A similar characterisation is valid for the local Hardy space $h^p$, namely
\begin{equation}\label{eq:h1}
	\norm{f}_{h^p}\thickapprox \brkt{\int \sup_{0<t<1}\sup_{\abs{x-y}<t} \abs{\widehat{\Theta}(tD) f(y)}^p\dd x}^{\frac{1}{p}}.
\end{equation}
In particular it is clear that $H^p$ is a subspace of $h^p$ and, for any $f\in H^p$, $\norm{f}_{h^p}\leq \norm{f}_{H^p}$. It is also well-known that, for $1<p<\infty$, $H^p$ coincides with $L^p$ and the two norms are equivalent.

We refer the reader to the work of D.~Goldberg \cite{G} and the paper of C.~Fefferman and E.M.~Stein \cite{FS} for further properties of $h^p$ and $H^p$ respectively.

The dual of $h^1$ is the space $\bmo$ which is the space of locally integrable functions for which
\begin{equation}\label{eq:bmo}
	\norm{f}_{\bmo}:=\norm{f}_{\mathrm{BMO}}+\norm{\widehat{\Theta}(D) f}_{L^\infty}<+\infty,
\end{equation}
see e.g.~\cites{Taylor, Gol}. Moreover, as in the definition of the $h^1$ norm, different choices of $\Theta$ produce equivalent norms.

\begin{thm}\label{linear_FIO}
      Let $T^\phase_\sigma$ be a Fourier integral operator given by  \eqref{eq:Linear_FIO} with an amplitude $\sigma(x,\xi)$ which is compactly supported in the $x$ variable and a non-degenerate phase function $\varphi(x,\xi)$ as above. If $0<p\leq\infty$, $m\leq -(n-1)\abs{\frac{1}{2}-\frac{1}{p}}$ and $\sigma(x,\xi) \in S^m_{1, 0}$, then there exists a constant $C>0$ such that
          \begin{enumerate}[label=\bf{T.{\arabic*}}]
      		\item\label{eq:SSS} $\Vert T^\phase_\sigma f\Vert_{L^{p}} \leq C \Vert f\Vert_{L^{p}}$ when $1<p<\infty$,
		\item \label{eq:SSS_infty} $\Vert T^\phase_\sigma f\Vert_{\mathrm{BMO}} \leq C \Vert f\Vert_{L^\infty}$ when $p=\infty$,
		\item \label{eq:SSS_1} $\Vert T^\phase_\sigma f\Vert_{L^{1}} \leq C \Vert f\Vert_{H^1}$ when $p=1$, and
		\item \label{eq:Peloso} $\Vert T^\phase_\sigma f\Vert_{h^{p}} \leq C \Vert f\Vert_{h^p}$ when $0<p\leq 1$.
	\end{enumerate}
If $\sigma(x,\xi) \in L^\infty S^m_{1}$ and $m< -(n-1)\abs{\frac{1}{2}-\frac{1}{p}}$, then
     \begin{enumerate}[label=\bf{T.{\arabic*}}]\setcounter{enumi}{4}
     \item \label{eq:DSS} $\Vert T^\phase_\sigma f\Vert_{L^{p}} \leq C \Vert f\Vert_{L^{p}}$ when $1\leq p\leq \infty$.
	\end{enumerate}
\end{thm}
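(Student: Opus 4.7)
The plan is to observe that each of the five statements in Theorem \ref{linear_FIO} is either a direct quotation of a result in the literature or follows from such a result by a standard duality or atomic‐decomposition argument. I will not attempt to reprove Seeger--Sogge--Stein from scratch; rather, I will isolate what must actually be verified in each case.

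For \ref{eq:SSS} I would simply invoke the original theorem of Seeger, Sogge and Stein \cite{SSS}. The essential ingredient is the second dyadic decomposition, by which one writes $\sigma=\sum_{k\geq 0}\sigma_{k}$ with $\sigma_{k}$ supported on $|\xi|\sim 2^{k}$, further decomposed into pieces supported on conical sectors of angular width $2^{-k/2}$ around directions in a maximal $2^{-k/2}$-separated set on $S^{n-1}$. The compact $x$-support of $\sigma$ makes the resulting almost-orthogonal sum rigorous, yielding the stated $L^{p}$ bound for $1<p<\infty$ with $m\leq -(n-1)|1/2-1/p|$.

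For \ref{eq:SSS_1} I would use the atomic characterisation of $H^{1}$: reduce to showing that $\|T^{\phase}_{\sigma}a\|_{L^{1}}$ is uniformly bounded on $H^{1}$-atoms, which is precisely the endpoint argument of \cite{SSS}; here the order $m=-(n-1)/2$ and the microlocalisation are used to beat the failure of $L^{1}$-boundedness. Statement \ref{eq:SSS_infty} then follows by duality: the formal adjoint $(T^{\phase}_{\sigma})^{*}$ is, modulo a smoothing remainder handled by its Schwartz kernel, again a Fourier integral operator of the same order with a non-degenerate phase (the non-degeneracy condition \eqref{intro nondeg} is symmetric in $x$ and $\xi$), so \ref{eq:SSS_1} applied to the adjoint gives the dual $L^{\infty}\to \mathrm{BMO}$ bound on the original operator. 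For \ref{eq:Peloso}, the local-Hardy-space bound in the quasi-Banach range $0<p\leq 1$ is due to Peloso and Sogge and follows the same second-dyadic scheme, now combined with the $h^{p}$-atomic decomposition and the characterisation \eqref{eq:h1}; the compact $x$-support of $\sigma$ ensures that only finitely many scales $t$ contribute, so $h^{p}$ is the natural target rather than $H^{p}$.

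Finally, \ref{eq:DSS} is the rough-symbol version, due to Kenig and Staubach \cite{KS} (extending Dos Santos Ferreira--Staubach in the linear FIO setting). The proof again uses the second dyadic decomposition but, because $\sigma \in L^{\infty}S^{m}_{1}$ has no regularity in the spatial variable, the low‐frequency piece must be handled by direct kernel estimates and the high‐frequency pieces by $TT^{*}$ combined with an almost-orthogonality argument that only costs a factor $\langle\xi\rangle^{\eps}$; this is precisely why the strict inequality $m<-(n-1)|1/2-1/p|$ is needed instead of $m\leq$. The main technical obstacle, were one to redo these proofs in detail rather than cite, would be the endpoint \ref{eq:SSS_1} and the rough case \ref{eq:DSS}: both require a careful handling of the angular decomposition near the characteristic cone where the phase degenerates as a function of $x$, and in \ref{eq:DSS} the lack of $x$-smoothness forces one to replace integration-by-parts arguments in $x$ with pointwise kernel bounds obtained via stationary phase only in $\xi$.
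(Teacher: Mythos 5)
Your proposal captures the spirit of the paper's proof: Theorem~\ref{linear_FIO} is indeed established by quoting the linear FIO literature, so a sketch of how those cited proofs go is not strictly necessary. However, three points should be flagged.

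First, a pair of misattributions. Statement~\ref{eq:Peloso} is due to Peloso and \emph{Secco} \cite{PS}, not Peloso and Sogge. And \ref{eq:DSS} is drawn from Dos Santos Ferreira--Staubach \cite{DSFS} (Theorems 2.1.1, 2.2.1 and 2.3.1 there), not from Kenig--Staubach; the paper \cite{KS} introduced the rough symbol class $L^\infty S^m_\varrho$ in the pseudodifferential setting, but the FIO boundedness result is proved in \cite{DSFS}. Your parenthetical gets the relationship backwards.

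Second, and more substantively, your treatment of \ref{eq:DSS} omits the one step in the paper's proof that is not a bare citation. The DSFS theorems are stated under a \emph{strong non-degeneracy} hypothesis: a uniform positive lower bound on $|\det \d^2_{x,\xi}\varphi|$ over the relevant set, together with the symbol-type bounds $|\partial_\xi^\alpha\partial_x^\beta\varphi(x,\xi)|\lesssim |\xi|^{1-|\alpha|}$. The hypotheses of Theorem~\ref{linear_FIO} only assume the pointwise non-vanishing condition \eqref{intro nondeg}. The paper's proof of \ref{eq:DSS} therefore has to observe that, because the amplitude has compact $x$-support $\mathcal{K}$ and $\varphi$ is homogeneous of degree one and $C^\infty$ in $\xi\neq 0$, the determinant is a continuous nonvanishing function of $(x,\xi/|\xi|)$ on the compact set $\mathcal{K}\times S^{n-1}$, hence bounded below, and likewise the derivatives of $\varphi$ are uniformly controlled there. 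Without this verification one cannot legitimately apply DSFS; you should add it.

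Third, your derivation of \ref{eq:SSS_infty} from \ref{eq:SSS_1} by duality is a genuinely different organization from the paper, which simply cites SSS for both. The duality route is in principle correct, but note that the adjoint of an FIO with amplitude $\sigma(x,\xi)$ and phase $\varphi(x,\xi)$ is not literally of the same form as \eqref{eq:Linear_FIO}: writing it out gives an operator with amplitude depending on the integration variable as well, and one must either invoke the calculus to return to reduced form or re-prove the kernel estimates for the adjoint directly. The original SSS argument sidesteps this by proving the $L^\infty\to\mathrm{BMO}$ bound directly, so if one wants to be rigorous here it is simpler to cite SSS for both endpoints, as the paper does.
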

\begin{proof}
Statements \ref{eq:SSS}, \ref{eq:SSS_infty}, \ref{eq:SSS_1} are classical results of A. Seeger, C. Sogge and E. Stein \cite{SSS}. The case $p=2$ is due to L. H\"ormander \cite{H2}.  Assertion \ref{eq:Peloso} is proved by M. Peloso and S. Secco in \cite{PS}.

For \ref{eq:DSS} observe that if $\mathcal{K}$ denotes the $x$-support of the amplitude, the continuity, homogeneity and non-degeneracy of the phase function yield
\begin{equation}\label{eq:weak_SND}
	\inf_{(x,\xi)\in \mathcal{K}\times (\R^{n}\setminus\{0\})} \abs{\det \brkt{\frac{\d^2 \phase}{\d x \d\xi}}}=c_\mathcal{K}>0.
\end{equation}
For the same reason, for any pair of multi-indices $\alpha$ and $\beta$, there exists a positive constant $C_{\alpha, \beta}$ such that
   \begin{align*}
      \sup_{(x,\,\xi) \in \mathcal{K}\times(\R^n \setminus \{0\})}  |\xi| ^{-1+\vert \alpha\vert}\vert \partial_{\xi}^{\alpha}\partial_{x}^{\beta}\phase(x,\xi)\vert
      \leq C_{\alpha , \beta}.
   \end{align*}
Now a careful examination of the proofs of Theorems 2.1.1, 2.2.1 and 2.3.1 in the paper of D. Dos Santos Ferreira and W. Staubach \cite{DSFS} yield the desired boundedness.
\end{proof}

\begin{defn}
A {bilinear oscillatory integral operator} $T^{\phase_1,\phase_2}_{\sigma}$ is an operator which is defined to act on Schwartz functions $f$ and $g$ by the formula
\begin{equation}\label{defn RS FIO}
T^{\phase_1,\phase_2}_{\sigma}(f,g)(x) = \iint \sigma(x,\xi,\eta)\widehat{f}(\xi)\widehat{g}(\eta)e^{i\phase_1(x,\xi)+i \phase_2(x,\eta)} \ddd\xi \ddd\eta.
\end{equation}
\end{defn}

Now we state the main theorem of this paper.\\

\begin{thm} \label{main}
Let $1\leq p,q\leq \infty$ and $0\leq r\leq \infty$ be such that $1/r=1/p+1/q$. Suppose that
\begin{equation}\label{eq:theorder}
	m\leq -(n-1)\brkt{\abs{\frac{1}{2}-\frac{1}{p}}+\abs{\frac{1}{2}-\frac{1}{q}}},		
\end{equation}
$\sigma \in S^{m}_{1,0}(n,2)$ is compactly supported in the spatial variable and $\phase_1,\phase_2$ are non-degenerate phase functions. Then there exists a constant $C$, depending only on a finite number of derivatives of $\sigma$, such that
\begin{equation}\label{eq:between}
\|T_\sigma^{\phase_1,\phase_2}(f,g)\|_{L^{r}(\R^n)} \leq C\|f\|_{L^p(\R^n)}\|g\|_{L^q(\R^n)}, \quad \mbox{for $1<p,q<\infty$}
\end{equation}
and all Schwartz functions $f$ and $g$. In particular, if $m=0$ we have
\begin{equation}
	\label{eq:TT1}\|T_\sigma^{\phase_1,\phase_2}(f,g)\|_{L^1(\R^n)} \leq C\|f\|_{L^2(\R^n)}\|g\|_{L^2(\R^n)}.
\end{equation}
Moreover, if $n\geq 2$, one has the extremal estimates
\begin{eqnarray}
	\label{eq:TIT}\|T_\sigma^{\phase_1,\phase_2}(f,g)\|_{L^2(\R^n)} \leq C\|f\|_{L^\infty(\R^n)}\|g\|_{L^2(\R^n)},\\
	\label{eq:ITT}\|T_\sigma^{\phase_1,\phase_2}(f,g)\|_{L^2(\R^n)} \leq C\|f\|_{L^2(\R^n)}\|g\|_{L^\infty(\R^n)},\\
	\label{eq:FFt1}\|T_\sigma^{\phase_1,\phase_2}(f,g)\|_{L^{\frac{2}{3}}(\R^n)} \leq C\|f\|_{L^2(\R^n)}\|g\|_{H^1(\R^n)},\\
	\label{eq:FtF1}\|T_\sigma^{\phase_1,\phase_2}(f,g)\|_{L^{\frac{2}{3}}(\R^n)} \leq C\|f\|_{H^1(\R^n)}\|g\|_{L^2(\R^n)},
\end{eqnarray}
for $m=-(n-1)/2$, and
\begin{eqnarray}
	\label{eq:IIBmo}\|T_\sigma^{\phase_1,\phase_2}(f,g)\|_{\mathrm{BMO}(\R^n)} \leq C\|f\|_{L^\infty(\R^n)}\|g\|_{L^ \infty(\R^n)},\\
	\label{eq:IH1}\|T_\sigma^{\phase_1,\phase_2}(f,g)\|_{L^1(\R^n)} \leq C\|f\|_{L^\infty(\R^n)}\|g\|_{H^1(\R^n)},\\
	\label{eq:HI1}\|T_\sigma^{\phase_1,\phase_2}(f,g)\|_{L^1(\R^n)} \leq C\|f\|_{H^1(\R^n)}\|g\|_{L^\infty(\R^n)},\\
	\label{eq:HHhalf}\|T_\sigma^{\phase_1,\phase_2}(f,g)\|_{L^{\frac{1}{2}}(\R^n)} \leq C\|f\|_{H^1(\R^n)}\|g\|_{H^1(\R^n)},
\end{eqnarray}
for $m=-(n-1)$. Except \eqref{eq:TT1}, \eqref{eq:FFt1},\eqref{eq:FtF1} and \eqref{eq:HHhalf}, the previous estimates do not hold in general in the case $n=1$.
\end{thm}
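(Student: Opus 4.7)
The plan is to establish the nine endpoint inequalities listed just before the statement and then fill in the full range via bilinear interpolation. Concretely, I would separately prove the bounds at $L^2\times L^2\to L^1$ (order $m=0$), at $L^2\times L^\infty\to L^2$ and $L^\infty\times L^2\to L^2$, at $L^\infty\times L^\infty\to\mathrm{BMO}$, at $H^1\times L^\infty\to L^1$ and $L^\infty\times H^1\to L^1$, and finally at the three non-Banach points $L^2\times H^1\to L^{2/3}$, $H^1\times L^2\to L^{2/3}$ and $H^1\times H^1\to L^{1/2}$. Since the critical hypersurface $m=-(n-1)(|\tfrac12-\tfrac1p|+|\tfrac12-\tfrac1q|)$ is the convex envelope of these nine endpoints in the appropriate $(\tfrac1p,\tfrac1q,m)$-parameter space, multilinear complex interpolation then yields \eqref{eq:between}--\eqref{eq:HHhalf}.

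For each endpoint, I would first split $\sigma=\sigma_{\mathrm{low}}+\sigma_{\mathrm{high}}$ using a smooth cutoff so that on $\mathrm{supp}\,\sigma_{\mathrm{low}}$ one of the frequency variables stays in a compact set; the piece $T^{\phase_1,\phase_2}_{\sigma_{\mathrm{low}}}$ is absorbed by Theorem~\ref{main_low}. The high-frequency piece is then further split via a continuous version of the Calderón--Coifman--Meyer paraproduct decomposition into three symbols supported respectively on $|\xi|\ll|\eta|$, $|\eta|\ll|\xi|$ and $|\xi|\sim|\eta|$. On each of these three regions I would use a commutator argument combined with the parameter-dependent composition formula for linear pseudodifferential and Fourier integral operators (Theorem~\ref{left composition with pseudo}) to rewrite the high-frequency piece as a principal term plus remainder terms of strictly lower order. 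The remainders are then controlled by iteration and direct application of the linear Seeger--Sogge--Stein bounds collected in Theorem~\ref{linear_FIO}.

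The analysis of the principal terms is the heart of the matter and must be done case by case. For $L^2\times L^2\to L^1$ the principal term reduces to a bilinear paraproduct composed with two linear FIOs of order zero, so that it is disposed of by combining Theorem~\ref{linear_FIO}\eqref{eq:SSS} with standard paraproduct theory. For $L^2\times L^\infty\to L^2$ and $L^\infty\times L^2\to L^2$ I would use kernel estimates, a quadratic (square function) estimate and a Carleson measure bound; for $L^\infty\times L^\infty\to\mathrm{BMO}$ these are supplemented by the Fefferman--Stein real-variable characterisation of Hardy spaces and the Stein--Weiss theory of harmonic extensions on $\R^{n+1}_+$; for $H^1\times L^\infty\to L^1$ and its symmetric partner, one replaces $\mathrm{BMO}$ by Goldberg's $\bmo$ and works in the local Hardy space $h^1$. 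The three non-Banach endpoints require a different apparatus: instead of the continuous paraproduct, I would apply a discrete Littlewood--Paley decomposition of $\sigma$ together with Peetre's maximal function, the Fefferman--Stein vector-valued maximal inequality and the Triebel--Lizorkin realisation of $H^p$, avoiding an atomic decomposition of $H^1$.

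The main obstacle is tracking the precise loss of regularity incurred by Theorem~\ref{left composition with pseudo} and by the paraproduct decomposition, so that the main terms are estimated exactly at the critical order $m=-(n-1)(|\tfrac12-\tfrac1p|+|\tfrac12-\tfrac1q|)$ rather than with the strict inequality that appears in the earlier work \cite{GP, RS}. This critical endpoint nature forces delicate bookkeeping of symbol classes in each of the nine sub-arguments, and for the $\mathrm{BMO}$ and sub-unit $L^r$ targets a careful dimension-by-dimension analysis, which is why the restriction $n\geq 2$ must appear in \eqref{eq:TIT}--\eqref{eq:HHhalf} (except for \eqref{eq:TT1}, \eqref{eq:FFt1}, \eqref{eq:FtF1} and \eqref{eq:HHhalf}), as confirmed by the counterexamples alluded to in Section~\ref{sect:1d}.
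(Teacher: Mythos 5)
Your proposal reproduces the paper's proof strategy essentially verbatim: the same low/high-frequency split with the low-frequency part absorbed by Theorem~\ref{main_low}, the same use of the parameter-dependent composition formula of Theorem~\ref{left composition with pseudo} together with a commutator argument to reduce the high-frequency piece to a paraproduct-type main term plus lower-order errors, the same case-by-case endpoint machinery (quadratic and Carleson-measure estimates for the $L^2$ targets, Fefferman--Stein theory and the Stein--Weiss harmonic-extension argument for $\mathrm{BMO}$, Goldberg's $h^1$/$\bmo$ for the $H^1\times L^\infty$ and $L^\infty\times H^1$ cases, and the discrete Littlewood--Paley/Peetre maximal function/Triebel--Lizorkin route for the non-Banach targets), followed by multilinear interpolation. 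The only genuine departure is small: the paper's high-frequency decomposition is two-sided, into $\sigma_1$ and $\sigma_2$ with \emph{overlapping} supports covering $|\eta|\lesssim|\xi|$ and $|\xi|\lesssim|\eta|$ respectively, rather than the three-piece Bony split you describe; adding a separate resonant region $|\xi|\sim|\eta|$ would leave the output frequency of that piece neither ball- nor annulus-localised, which mildly complicates the $\mathrm{BMO}$, $h^1$ and square-function steps, whereas the two-piece choice lets the paper simply allow the ball multiplier $\widehat{\theta}_0$ to have large support and then compensate with the operator $R_t$.
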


\begin{figure}[!htb]
\centering
\includegraphics[scale=.8]{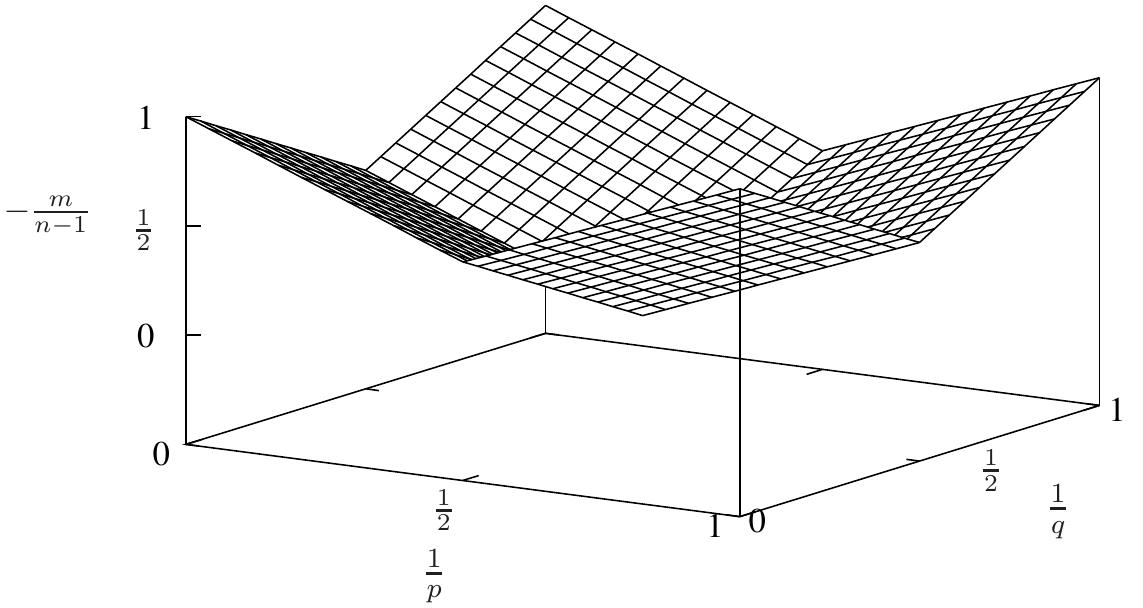}
\caption{Surface $\frac{-m}{n-1}={\abs{\frac{1}{2}-\frac{1}{p}}+\abs{\frac{1}{2}-\frac{1}{q}}}$.}
\label{fig:digraph}
\end{figure}
In light of the results in \cite{RS}, it is enough to prove the theorem for the case where we have an equality in \eqref{eq:theorder} (see Figure \ref{fig:digraph}). The proof will proceed by considering separately the low and high frequency parts of the amplitude.

The estimates on the low frequency part can more or less all be handled at the same time (see Section \ref{sec:low_frequency}).

The estimates for the high frequency parts when $n\geq 2$ will be carried out by obtaining the endpoint estimates \eqref{eq:TT1}-\eqref{eq:HHhalf}, which correspond to the  vertices of the surface depicted in Figure \ref{fig:digraph}.  Figure \ref{fig:Levels} shows the level sets of the surface for the three main values of $m$ corresponding to the endpoints above. Complex interpolation techniques then yield the intermediate cases.

In dimension $n=1$ the boundedness $L^\infty\times L^p\to L^p$, $L^p\times L^\infty\to L^p$ for $1<p<\infty$, $L^\infty\times H^1\to L^1$, $H^1\times L^\infty\to L^1$ and $L^\infty \times L^\infty \to \mathrm{BMO}$ all fail for general amplitudes of order $m=0$ (see Section \ref{sect:1d} for counterexamples in these cases). The positive results in this case are actually a consequence of the boundedness of bilinear paraproducts. Indeed, our method will reduce the study of one-dimensional bilinear Fourier integral operators to that of a composition of a bilinear paraproduct with two linear FIOs.

We will divide our analysis of the high frequency parts into two: First, the cases where the target space is Banach, i.e.~\eqref{eq:TT1}, \eqref{eq:TIT}, \eqref{eq:ITT}, \eqref{eq:IIBmo}, \eqref{eq:IH1} and \eqref{eq:HI1} (see Section \ref{sect:high}) and secondly the cases where the target spaces are non-Banach, i.e.~\eqref{eq:FFt1}. \eqref{eq:FtF1} and \eqref{eq:HHhalf}.

However, before proceeding with the detailed study, we shall describe how these end\-point estimates yield the general result. Real interpolation yields the $L^p\times L^q\to L^r$ boundedness in the cases that $m=-(n-1),-(n-1)/2$ and $0$, whenever $p,q$ satisfy
\[
	m= -(n-1)\brkt{\abs{\frac{1}{2}-\frac{1}{p}}+\abs{\frac{1}{2}-\frac{1}{q}}}
\]
and $1/r=1/p+1/q$ (see Figure \ref{fig:Levels}).
\begin{figure}[!htb]
\centering
\includegraphics[scale=.8]{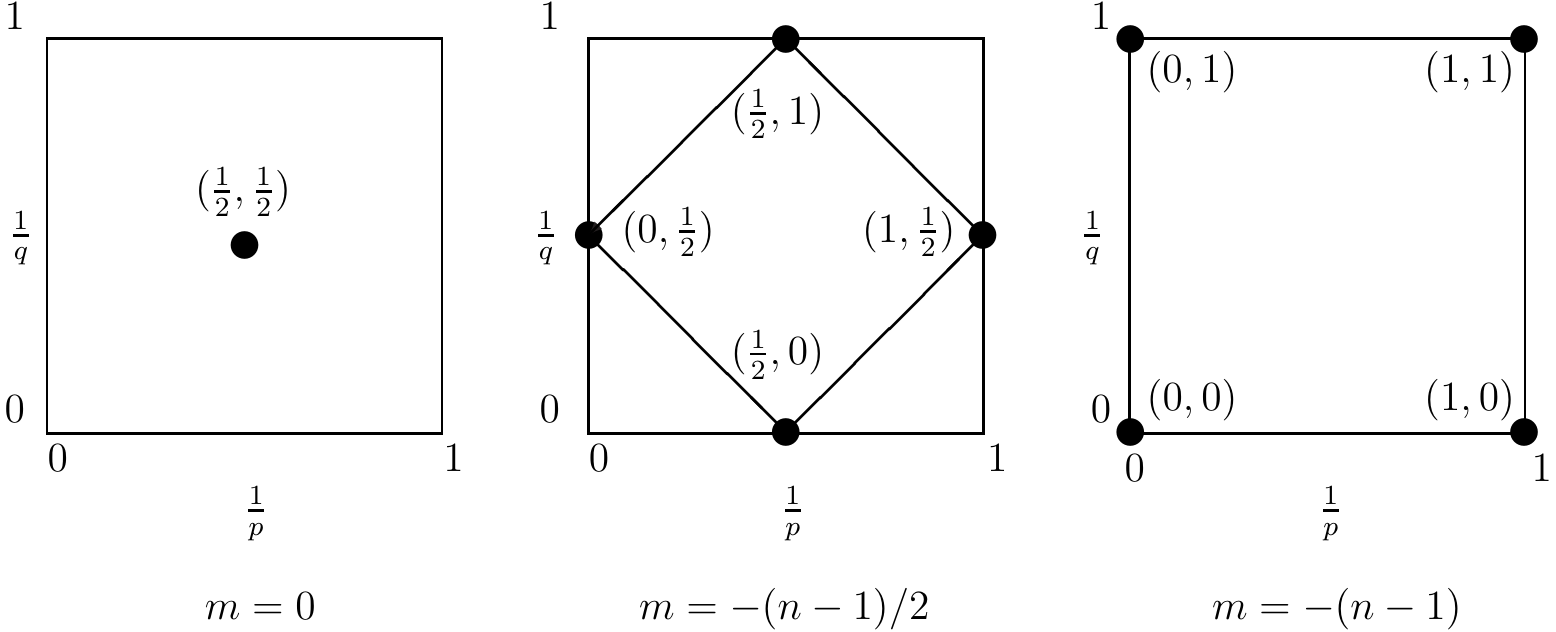}
\caption{Level sets for the surface $\frac{-m}{n-1}={\abs{\frac{1}{2}-\frac{1}{p}}+\abs{\frac{1}{2}-\frac{1}{q}}}$. }
\label{fig:Levels}
\end{figure}
For the other values of $m$, the result follows by bilinear complex interpolation. Here we only mention briefly how this works in the case $L^2\times L^q\to L^r$ with $2\leq q\leq \infty$. The other cases are treated in a similar way.

We fix $-(n-1)/2<m<0$ and $\sigma\in S^{m}_{1,0}(n,2)$, and let $q$ be such that $m=-(n-1)\brkt{\frac{1}{2}-\frac{1}{q}}$. As in the case of linear FIOs, one defines an analytic family of operators $\{T_z\}$ on the strip $0<\Re(z)\leq 1$ by
\[
	 T_z(f,g)(x)=\iint \sigma_z(x,\xi,\eta)\widehat{f}(\xi)\widehat{g}(\eta)e^{i\phase_1(x,\xi)+i \phase_2(x,\eta)} \ddd\xi \ddd\eta,
\]
where $\sigma_z(x,\xi,\eta)=\sigma(x,\xi,\eta)\brkt{1+\abs{\xi}^2+\abs{\eta}^2}^{\frac{\gamma(z)}{2}}$,  $\gamma(z)=-m-\frac{z(n-1)}{2}$. Now, when $\Re(z)=0$ and $\Re(z)=1$, one has that $\sigma_z\in S^0_{1,0}(n,2)$ and
$\sigma_z\in S^{\frac{-(n-1)}{2}}_{1,0}(n,2)$ respectively. In both cases, the seminorms depend polynomially on $\abs{\Im z}$. Using the $L^2\times L^2\to L^1$ boundedness for $\Re(z)=0$,  the $L^2\times L^\infty\to L^2$ boundedness for $\Re(z)=1$ and the interpolation theorem for analytic families of multilinear operators by L. Grafakos and M. Masti{\l}o \cite{GM}*{Theorem 1.1}, the desired $L^{2}\times L^{q}\to L^r$ boundedness follows.

We should mention that \cite{GM}*{Theorem 1.1} also applies to multilinear interpolation in the case of non-Banach $L^p$ spaces. \\

Now we shall turn to the proof of the aforementioned end-point estimates. Since the phase functions $\phase_j$ are smooth, homogeneous and non-degenerate, the mean value theorem yields that there exists a constant $C_1>0$ such that \begin{equation*}
	\abs{\nabla_x \phase_j(x,\xi)}\leq C_1\abs{\xi} \quad {\text{for any $\xi\in \R^n\setminus \{0\}$ and $x\in \mathcal{K}$.}}
\end{equation*}
On the other hand, the strong non-degeneracy condition on the phases, yields that there exists a constant $C_2>0$ \begin{equation*} C_2\abs{\xi}\leq\abs{\nabla_x \phase_j(x,\xi)} \quad {\text{for any $\xi\in \R^n\setminus \{0\}$ and $x\in \mathcal{K}$}}. \end{equation*}
Therefore, there exists $0<\lambda\leq 1$ such that for any $\xi\in \R^n$ and $x\in \mathcal{K}$
\begin{equation}\label{asymptotic condition1}
\lambda |\xi| \leq |\nabla_x\phase_j(x,\xi)| \leq \lambda^{-1}|\xi| \quad \mbox{for $j=1,2$,}
\end{equation}\\

We will prove Theorem \ref{main} in the following few sections. To this end, we introduce a smooth function $\mu \colon \R^n \to \R$ such that $\mu(\xi) = 0$ for $|\xi| \leq 1/(4\lambda)$ and $\mu(\xi) = 1$ for $|\xi| \geq 1/(3\lambda)$, where $\lambda$ is as in \eqref{asymptotic condition1}. Observe that
\begin{equation}\label{eq:first_step}
\begin{aligned}
	T_{\sigma}^{\phase_1,\phase_2}(f,g) & = T_{\sigma}^{\phase_1,\phase_2}(\mu(D)f,\mu(D)g)+T_{\sigma}^{\phase_1,\phase_2}(\mu(D)f,(1-\mu)(D)g)\\
& \qquad +T_{\sigma}^{\phase_1,\phase_2}((1-\mu)(D) f,g).
\end{aligned}
\end{equation}

\section{The low frequency cases}\label{sec:low_frequency}

To estimate the $L^p$-norm of the last two terms in \eqref{eq:first_step}  we can make use of linear boundedness results by viewing the bilinear operator as an iteration of linear operators.
We confine ourselves to the proof for the third term of \eqref{eq:first_step} since the second term is treated similarly.
Before proceeding with the analysis of the low frequency portion of the operator, we state and prove the following lemma.
\begin{lem} \label{lem:BMO_embedding}Let $a\in {\rm BMO}(\R^n)$ be such that it is supported in the cube $Q_R= [-R,R]^n$. Then, for any $1\leq q<\infty$ there exists a constant $\mathfrak{c}_{R,q,n}$ depending only on $R,q$ and $n$ such that
\[
	\norm{a}_{L^q(\R^n)}\leq \mathfrak{c}_{R,q,n} \norm{a}_{{\rm BMO}(\R^n)}.
\]
\begin{proof}
Observe that
\[
	\begin{split}
	&\left|\frac{1}{\abs{Q_R}}\int_{Q_R} a(x)\dd x -\frac{1}{\abs{Q_{2R}}}\int_{Q_{2R}} a(x)\dd x\right| \leq \frac{1}{\abs{Q_{R}}}\int_{Q_{R}}
	\abs{a(x)-{\rm Avg}_{Q_{2R}}a}\dd x \\
	&\qquad\leq %
	\frac{2^n}{\abs{Q_{2R}}}\int_{Q_{2R}}
	\abs{a(x)-{\rm Avg}_{Q_{2R}}a}\dd x \leq 2^n \norm{a}_{{\rm BMO}(\R^n)}.
	\end{split}
\]
On the other  hand, since $a$ is supported in $Q_R$,
\[
	\abs{\frac{1}{\abs{Q_R}}\int_{Q_R} a(x)\dd x -\frac{1}{\abs{Q_{2R}}}\int_{Q_{2R}} a(x)\dd x}=
	\abs{\int_{Q_R} a(x)\dd x}\abs{\frac{1}{\abs{Q_R}}-\frac{1}{\abs{Q_{2R}}}}=\abs{{\rm Avg }_{Q_R}\, a}(1-2^{-n}),
\]
which yields
\[
	\abs{{\rm Avg}_{Q_R}\, a}\leq \frac{2^{2n}}{2^n-1} \norm{a}_{{\rm BMO}(\R^n)}.
\]
Therefore, for every $1\leq q<\infty$,
\[
	\begin{split}
	\norm{a}_{q} &\leq \brkt{\int_ {Q_R} \abs{a(x)-{\rm Avg}_{Q_R} a}^q\dd x}^{1/q}+\abs{{\rm Avg}_{Q_R} a}\abs{Q_R}^{1/q}\\
	&\leq \abs{Q_R}^{1/q}\brkt{\sup_Q \brkt{\frac{1}{\abs{Q}}\int_ {Q} \abs{a(x)-{\rm Avg}_{Q} a}^q\dd x}^{1/q}+\abs{{\rm Avg}_{Q_R} a}}\\
	&\leq \abs{Q_R}^{1/q} (c_q+c_n) \norm{a}_{{\rm BMO}(\R^n)}.
	\end{split}
\]
\end{proof}
\end{lem}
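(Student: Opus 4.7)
The plan is to exploit the fact that $a$ has compact support in $Q_R$ to control its average on $Q_R$ by the $\mathrm{BMO}$ seminorm, and then to use a John--Nirenberg type inequality on $Q_R$ to handle the oscillation part. The key observation is that even though the $\mathrm{BMO}$ seminorm ignores additive constants, the support condition breaks that invariance and lets one recover an honest $L^q$ bound.

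First, I would compare the averages of $a$ over $Q_R$ and $Q_{2R}$. Since $\mathrm{supp}\, a \subset Q_R$, the numerators of these two averages coincide, so
\[
\mathrm{Avg}_{Q_{2R}} a \;=\; \frac{1}{|Q_{2R}|}\int_{Q_R} a \;=\; 2^{-n}\,\mathrm{Avg}_{Q_R} a,
\]
which gives $|\mathrm{Avg}_{Q_R} a - \mathrm{Avg}_{Q_{2R}} a| = (1 - 2^{-n})\,|\mathrm{Avg}_{Q_R} a|$. On the other hand, by the triangle inequality applied to the two averages, that same quantity is bounded above by $\frac{1}{|Q_R|}\int_{Q_R}|a - \mathrm{Avg}_{Q_{2R}} a|\,\mathrm{d}x \leq \frac{2^n}{|Q_{2R}|}\int_{Q_{2R}}|a - \mathrm{Avg}_{Q_{2R}} a|\,\mathrm{d}x \leq 2^n \|a\|_{\mathrm{BMO}}$. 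Combining these estimates, I get $|\mathrm{Avg}_{Q_R} a| \leq \frac{2^{2n}}{2^n-1}\|a\|_{\mathrm{BMO}}$.

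Second, I split $\|a\|_{L^q} = \|a\|_{L^q(Q_R)}$ as
\[
\|a\|_{L^q(Q_R)} \;\leq\; \|a - \mathrm{Avg}_{Q_R} a\|_{L^q(Q_R)} + |\mathrm{Avg}_{Q_R} a|\cdot |Q_R|^{1/q}.
\]
The second summand is already controlled by the previous step times $|Q_R|^{1/q}$. For the first summand, I invoke the $L^q$-characterisation of $\mathrm{BMO}$ (a consequence of John--Nirenberg): for any cube $Q$ and any $1\leq q<\infty$,
\[
\Bigl(\frac{1}{|Q|}\int_Q |a - \mathrm{Avg}_Q a|^q\,\mathrm{d}x\Bigr)^{1/q} \leq c_q\,\|a\|_{\mathrm{BMO}}.
\]
Applying this to $Q = Q_R$ gives $\|a - \mathrm{Avg}_{Q_R} a\|_{L^q(Q_R)} \leq c_q |Q_R|^{1/q}\|a\|_{\mathrm{BMO}}$. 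Putting the pieces together yields the desired bound with $\mathfrak{c}_{R,q,n} = (2R)^{n/q}(c_q + 2^{2n}/(2^n-1))$.

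The only subtle point is the use of the $L^q$-form of the $\mathrm{BMO}$ seminorm; everything else is elementary bookkeeping with nested cubes. In fact, since the author's write-up appears to avoid invoking John--Nirenberg explicitly, one can alternatively bound the first summand by the supremum of $L^q$-oscillations over all cubes, noting only that this supremum is equivalent to $\|a\|_{\mathrm{BMO}}$ by standard theory.
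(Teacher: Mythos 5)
Your proof is correct and follows the paper's argument essentially verbatim: you control $|\mathrm{Avg}_{Q_R}a|$ by comparing averages over $Q_R$ and $Q_{2R}$ exploiting the support condition, then split $\|a\|_{L^q}$ into the oscillation on $Q_R$ plus the average, and bound the oscillation by the John--Nirenberg $L^q$-equivalence of the $\mathrm{BMO}$ seminorm. The paper writes this last step as a supremum of $L^q$-oscillations over cubes dominated by $c_q\|a\|_{\mathrm{BMO}}$, which is the same fact you invoke; there is no substantive difference.
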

\subsection*{Remark} Observe that the constant $c_q$ in the proof of the last theorem blows up as $q$ goes to infinity (see \cite[p. 528]{G}) .

Next we prove the regularity of the frequency-localised part of the operator.
\begin{thm}  \label{main_low} Let $1\leq p,q\leq \infty$, $0<r\leq \infty$ and $m$ be as in \eqref{eq:theorder} and suppose that $\sigma \in S^m_{1,0}(n,2)$  is compactly supported in the spatial variable $x$ and in one of the frequency variables. Assume that $\phase_1,\phase_2$ are real-valued $C^\infty(\R^n\times( \R^n\setminus\{0\}))$ functions homogeneous of degree one in the frequency variable on the support of $\sigma$ and satisfy the non-dengeneracy condition. Then there exists a constant $C$ such that
\[
	\|T_\sigma^{\phase_1,\phase_2}(f,g)\|_{L^r(\R^n)} \leq C\|f\|_{L^{p}(\R^n)}\|g\|_{L^{q}(\R^n)}
\]
for all Schwartz functions $f$ and $g$. If $p$ or $q$ are equal to $1$, the above inequality holds with the Lebesgue space $L^1$ replaced by and $H^1$.
\end{thm}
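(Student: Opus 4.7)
The plan is to interpret the bilinear operator as an iteration of linear Fourier integral operators. Without loss of generality I assume $\sigma$ is compactly supported in $\xi$ (the other case is analogous). Writing
\[
T^{\phase_1,\phase_2}_\sigma(f,g)(x) = T^{\phase_2}_{a_f}(g)(x), \qquad a_f(x,\eta) := \int \sigma(x,\xi,\eta)\, e^{i\phase_1(x,\xi)}\, \widehat{f}(\xi)\, \ddd\xi,
\]
reduces the problem to a linear FIO in $g$ whose amplitude $a_f$ depends on $f$. The key step is to show that $a_f$ belongs to $S^m_{1,0}$ (as a symbol in $\eta$) with seminorms bounded by $\|f\|_{L^p}$ (or $\|f\|_{H^1}$ when $p=1$), and with compact $x$-support inherited from $\sigma$.

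To verify these symbol estimates I would write $a_f$ via its $y$-space kernel:
\[
a_f(x,\eta) = \int \kappa(x,y,\eta)\, f(y)\, \dd y, \qquad \kappa(x,y,\eta) := \int \sigma(x,\xi,\eta)\, e^{i(\phase_1(x,\xi)-y\cdot\xi)}\, \ddd\xi.
\]
The compact $\xi$-support of $\sigma$, combined with the boundedness of the relevant derivatives of $\sigma$ and $\phase_1$ on that support, permits repeated integration by parts in $\xi$, yielding for every $N$
\[
|\partial_x^\beta \partial_\eta^\alpha \kappa(x,y,\eta)| \leq C_{\alpha,\beta,N}(1+|\eta|)^{m-|\alpha|}(1+|y|)^{-N}.
\]
Taking $N$ sufficiently large and applying Hölder's inequality (with $\|f\|_{L^1}\leq\|f\|_{H^1}$ covering the endpoint $p=1$) then transfers these estimates to $a_f$, confirming $a_f\in S^m_{1,0}$ with seminorms controlled by $\|f\|_{L^p}$.

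With $a_f$ in the required symbol class, Theorem \ref{linear_FIO} applies to $T^{\phase_2}_{a_f}$: condition \eqref{eq:theorder} forces $m\leq -(n-1)|1/2-1/q|$, so parts \ref{eq:SSS}, \ref{eq:SSS_1} and \ref{eq:SSS_infty} handle respectively $1<q<\infty$, $q=1$ (with $L^1$ replaced by $H^1$ on the right) and $q=\infty$ (yielding a BMO bound). Since $T^{\phase_1,\phase_2}_\sigma(f,g)$ has compact $x$-support and $r\leq q$ (because $1/r=1/p+1/q\geq 1/q$), Hölder's inequality converts these bounds into the desired $L^r$-estimate; in the case $q=\infty$, Lemma \ref{lem:BMO_embedding} is first used to replace the BMO bound by an $L^{q_1}$ bound for a finite $q_1\geq r$. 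The main technical point will be verifying the $x$-smoothness of $a_f$ uniformly on the spatial support of $\sigma$, which requires careful bookkeeping of the powers of $\xi$ produced by $x$-derivatives of $e^{i\phase_1(x,\xi)}$—all of which are absorbed by the compactness of the $\xi$-support together with the homogeneity of $\phase_1$.
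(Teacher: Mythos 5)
Your approach is genuinely different from the paper's. You freeze the compactly supported frequency variable $\xi$ to view $T^{\phase_1,\phase_2}_\sigma$ as a linear FIO in $g$ with amplitude $a_f(x,\eta)$, and you then control the $S^m_{1,0}$ seminorms of $a_f$ \emph{pointwise in $x$} via a kernel estimate and H\"older. The paper instead views the operator as a linear FIO in $f$ with amplitude $\mathfrak{a}_g(x,\xi)$ depending on $g$, expands $\mathfrak{a}_g$ in a Fourier series in the compactly supported variable $\xi$, and then separately estimates the $L^q$ (or $H^1$, $\mathrm{BMO}$) norms of the Fourier coefficients $a_k(x)$ using the linear FIO theory on $g$, while the pieces $T^{\phase_1}_\zeta(f_k)$ are bounded on $L^p$. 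Your route is shorter and avoids the case split on $p=2$ versus $p\neq 2$ (and the $L^4$/Sobolev detour for $q=\infty$), because H\"older transfers $\|f\|_{L^p}$ directly into the sup-in-$x$ seminorm of $a_f$; the paper's Fourier-series route instead keeps amplitude information in $L^q_x$ norm, which is what forces the more delicate bookkeeping.

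However, there is a genuine gap in your key kernel claim. You assert
\[
|\partial_x^\beta\partial_\eta^\alpha\kappa(x,y,\eta)|\leq C_{\alpha,\beta,N}(1+|\eta|)^{m-|\alpha|}(1+|y|)^{-N}\quad\text{for every }N,
\]
citing ``the boundedness of the relevant derivatives of $\sigma$ and $\phase_1$ on that support.'' But $\phase_1$ is only $C^\infty$ on $\R^n\times(\R^n\setminus\{0\})$ and is positively homogeneous of degree one, so $\partial_\xi^\gamma\phase_1(x,\xi)\sim|\xi|^{1-|\gamma|}$ blows up as $\xi\to 0$ whenever $|\gamma|\geq 2$. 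In the intended application (the low-frequency pieces $T_\sigma(\mu(D)f,(1-\mu)(D)g)$ and $T_\sigma((1-\mu)(D)f,g)$ of the main theorem), the compact frequency support \emph{does} contain a neighbourhood of the origin, so integration by parts in $\xi$ cannot be iterated indefinitely. The obstruction is concrete: the function $\xi\mapsto\sigma(x,\xi,\eta)e^{i\phase_1(x,\xi)}$ is only Lipschitz at $\xi=0$, and the Fourier transform of such a compactly supported Lipschitz-but-not-$C^1$ function has decay of order $|y|^{-(n+1)}$, not $|y|^{-N}$ for all $N$. This fixed rate is in fact still enough for your argument (you need $(1+|y|)^{-N}\in L^{p'}$, and the worst case $p=\infty$ needs $N>n$, which $N=n+1$ supplies), but you must isolate the contribution from $|\xi|$ small and justify the rate, for instance by writing $e^{i\phase_1}=1+i\phase_1+O(|\xi|^2)$ and using that the degree-one homogeneous part contributes $|y|^{-(n+1)}$. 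As written, the claim ``for every $N$'' is false and the argument does not go through without this repair.
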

\begin{proof}
Without loss of generality we assume that the amplitude $\sigma(x,\xi,\eta)$ is compactly supported in the $x$ and $\xi$ variables. Let $\psi$ be a smooth cut-off function that is equal to one on the $\xi$ support of $\sigma$.
 We can write $T_{\sigma}^{\phase_1,\phase_2}(f,g)$  as
\begin{align*}
T_{\sigma}^{\phase_1,\phase_2}(f,g)(x) & = \int\left(\int \sigma(x,\xi,\eta)\widehat{g}(\eta)e^{i\phase_2(x,\eta)} \ddd\eta\right)\psi(\xi)\widehat{f}(\xi)e^{i\phase_1(x,\xi)} \ddd\xi\\
& = \int \mathfrak{a}_g(x,\xi) \widehat{f}(\xi)e^{i\phase_1(x,\xi)} \ddd\xi= T^{\phase_1}_{\mathfrak{a}_g}(f)(x),
\end{align*}
where
$
\mathfrak{a}_g(x,\xi) = \psi(\xi)\int \sigma(x,\xi,\eta)\widehat{g}(\eta)e^{i\phase_2(x,\eta)} \ddd\eta.
$

We need to show that
\[
\|T^{\phase_1}_{\mathfrak{a}_g}(f)\|_{L^{r}(\R^n)} \lesssim \|f\|_{L^{p}(\R^n)}\|g\|_{L^{q}(\R^n)}.
\]

Let $Q$ be a closed cube of side-length $L$ such that $\supp_{\xi} \mathfrak{a}_g(x,\xi) \subset \text{Int}(Q)$. We extend $\mathfrak{a}_g(x,\cdot)|_{Q}$ periodically with period $L$ to $\widetilde{\mathfrak{a}}_g(x,\xi)\in C^{\infty}(\R^{n}_\xi).$ Let $\zeta (\xi)$ be in $C^{\infty}_{0}$ with $\supp \zeta \subset Q$ and $\zeta=1$ on $\xi$-support of $\mathfrak{a}_g(x,\xi)$. Clearly, we have $\mathfrak{a}_g(x,\xi)=\widetilde{\mathfrak{a}}_g(x,\xi) \zeta(\xi)$. Now if we expand $\widetilde{\mathfrak{a}}_g(x,\xi) $ in a Fourier series, then setting $f_k(x)=f(x-\frac{2\pi k}{L})$ for any $k\in \Z^n$ we can write the FIO $T_{\mathfrak{a}_g}^{\phase_1}$ as
\begin{equation}\label{eq:Fourier_Serie}
	T_{\mathfrak{a}_g}^{\phase_1} f(x)=\sum_{k\in \Z^n} a_k(x) T_\zeta^{\phase_1} (f_k)(x),
\end{equation}
where
\[
	a_k(x)=\frac{1}{L^n}\int_{\R^n} \mathfrak{a}_g(x,\xi) e^{-i \frac{2\pi}{L}\p{k,\xi}}\, \ddd \xi,
\]
and $T^{\phase_1}_\zeta(v)(x):=\frac{1}{(2\pi)^n}\int \zeta(\xi) e^{i {\phase_1}(x,\xi)}\widehat{v}(\xi)\, \ddd \xi.$
Then integration by parts yields
\[
	a_k(x)= \frac{c_{n,N}}{|k_{l}|^N}\int_{\R^n} \partial^N_{\xi_l} \mathfrak{a}_g(x,\xi) e^{-i  \frac{2\pi}{L}\p{k,\xi}}\, \ddd \xi,
\]
for $l=1,\ldots,n$.

Assume first that $p\neq 2$, then $m<-(n-1)\abs{\frac{1}{q}-\frac{1}{2}}$. Since $\sigma \in S^m_{1,0}(n,2)$,
\[
	\sup_{x,\xi,\eta\in \R^n}\p{\eta}^{-m+\abs{\beta}}|\partial^\alpha_\xi\partial^\beta_\eta\partial^\gamma_x\sigma(x,\xi,\eta)| \lesssim 1.
\]
So, by {\ref{eq:SSS}} in Theorem \ref{linear_FIO}
and taking into account that  the $x$ and $\xi$ supports of $\mathfrak{a}_g$ are compact, we find that for any $M\leq 0$
\begin{equation}\label{eq:local_good}
\sup_{\xi\in \R^n} \langle \xi\rangle^{-M}\|\partial_\xi^\alpha \mathfrak{a}_g(\cdot,\xi)\|_{L^{q}(\R^n)}\leq c_\alpha \|g\|_{L^{q}(\R^n)},\quad \hbox{if  $1\leq q\leq \infty$}.
\end{equation}
Therefore
\begin{equation*}
	\max_{s=0,\ldots, N}\int_{\R^n} \norm{\partial^s_{\xi_{l}} \mathfrak{a}_g(\cdot,\xi)}_{L^{q} (\R^n)}\, \ddd\xi\leq c_{n, N}\norm{g}_{L^{q}(\R ^n)},
\end{equation*}
which yields
\begin{equation}\label{estimak}
	\norm{a_k}_{L^{q}}\leq c_{n,N,q}\norm{g}_{L^{q}(\R ^n)} (1+\abs{k})^{-N},
\end{equation}
for any $N\geq 0$. Let $\chi$ be a smooth cut-off function in the $x$ variable such that is equal to $1$ on the support of all  the $a_k(x)$. Assume first that $r\geq 1$. By the Minkowski and H\"older inequalities, we have
\begin{equation}\label{eq:bound}
\begin{split}
	\norm{T_{\mathfrak a_g}^{\phase_1} f}_{L^r(\R^n)}\leq  \sum_{k\in \Z^n}  \norm{a_k T_{\zeta}^{\phase_1}(f_k)}_{L^r(\R^n)}
\leq  \sum_{k\in \Z^n} \norm{a_k}_{L^{q}(\R^n)} \norm{\chi T_{\zeta}^{\phase_1}(f_k)}_{L^{p}(\R^n)}.
\end{split}
\end{equation}
Since $\chi(x)\zeta(\xi)\in S^{-\infty}(n,1)$ with compact $x$ and $\xi$ support, {\ref{eq:SSS}} yields
\[
	\norm{\chi T_\zeta^{\phase_1} (f_k)}_{L^{p}}\lesssim \norm{f_k}_{L^{p}}=\norm{f}_{L^{p}}, \quad \text{for $1\leq p\leq \infty$.}
\]
Thus \eqref{estimak} and \eqref{eq:bound}  yield
\[
\begin{split}
\|T_{\sigma}^{\phase_1,\phase_2}(f, g)\|_{L^r(\R^n)} &\lesssim \norm{g}_{L^{q}(\R^n)} \sum_{k\in \Z^n} (1+|k|)^{-N} \norm{f}_{L^{p}(\R^n)}\lesssim \norm{g}_{L^{q}(\R^n)}\norm{f}_{L^{p}(\R^n)}.
\end{split}
\]
For $r<1$, following the same argument, one can prove that
\begin{equation*} %
\begin{split}
	\norm{T_{\mathfrak a_g}^{\phase_1} f}_{L^r(\R^n)}^r\leq  \sum_{k\in \Z^n}  \norm{a_k T_{\zeta}^{\phase_1}(f_k)}_{L^r(\R^n)}^r
\leq  \sum_{k\in \Z^n} \norm{a_k}_{L^{q}(\R^n)}^r \norm{\chi T_{\zeta}^{\phase_1}(f_k)}_{L^{p}(\R^n)}^r,
\end{split}
\end{equation*}
and arguing in a similar way we obtain the result.

Now, if $p=2$  then $m=-(n-1)\abs{\frac{1}{q}-\frac{1}{2}}$, hence by Theorem \ref{linear_FIO} we have for any $M\leq 0$
\begin{eqnarray}
\label{case_p1} \sup_{\xi\in \R^n} \langle \xi\rangle^{-M}\|\partial_\xi^\alpha \mathfrak{a}_g(\cdot,\xi)\|_{L^{q}(\R^n)}&\leq c_\alpha \|g\|_{L^{q}(\R^n)}\quad &\hbox{if  $1<q<\infty$},\\
\label{case_1}
	\sup_{\xi\in \R^n} \langle \xi\rangle^{-M}\|\partial_\xi^\alpha \mathfrak{a}_g(\cdot,\xi)\|_{L^{1}(\R^n)} &\leq c_\alpha \|g\|_{{H^1}(\R^n)} \quad &\hbox{if $q=1$, and}\\
\label{BMO_class}
	\sup_{\xi\in \R^n} \langle \xi\rangle^{-M}\|\partial_\xi^\alpha \mathfrak{a}_g(\cdot,\xi)\|_{{\rm BMO}(\R^n)}&\leq c_\alpha \|g\|_{{L^\infty}(\R^n)} \quad &\hbox{if $q=\infty$}.
\end{eqnarray}
The case $1\leq q<\infty$ can now be treated in the same way as before (where if $q=1$ we replace the $L^1$ norm of $g$ by its $H^1$ norm), with \eqref{case_p1}-\eqref{BMO_class} replacing \eqref{eq:local_good}.

It remains to prove the case $q=\infty$. To this end we observe that by \eqref{BMO_class}, we have
\begin{equation*}
	\max_{s=0,\ldots, N}\int_{\R^n} \norm{\partial^s_{\xi_{l}} \mathfrak{a}_g(\cdot,\xi)}_{{\rm BMO} (\R^n)}\, \ddd\xi\leq c_{n, N}\norm{g}_{L^\infty(\R ^n)}.
\end{equation*}
Therefore, using Lemma \ref{lem:BMO_embedding} with $q=4$ yields
\begin{equation}\label{estimak2}
	\norm{a_k}_{L^{4}}\leq c_{n,N}\norm{g}_{L^\infty(\R ^n)} (1+\abs{k})^{-N},
\end{equation}
for any $N\geq 0$. By the Minkowski and H\"older inequalities,
\begin{equation}\label{eq:bound2}
\begin{split}
	\norm{T_{\mathfrak a_g}^{\phase_1} f}_{L^2(\R^n)}\leq  \sum_{k\in \Z^n}  \norm{a_k T_{\zeta}^{\phase_1}(f_k)}_{L^2(\R^n)}
\leq  \sum_{k\in \Z^n} \norm{a_k}_{L^{4}(\R^n)} \norm{\chi T_{\zeta}^{\phase_1}(f_k)}_{L^{4}(\R^n)}.
\end{split}
\end{equation}
Let $s=n/4$, and observe that $\chi(x)\zeta(\xi)\p{\xi}^{\frac{s}2}\in S^{-\infty}(n,1)$. Then {\ref{eq:DSS}} in Theorem \ref{linear_FIO} and the Sobolev embedding theorem yield
\[
	 \norm{\chi T_{\zeta}^{\phase_1}(f_k)}_{L^{4}(\R^n)}= \norm{\chi T_{\zeta \p{\cdot}^{\frac s 2}}^{\phase_1}((1-\Delta )^{-\frac s 2}f_k)}_{L^{4}(\R^n)}\lesssim \norm{(1-\Delta )^{-\frac s 2}f_k}_{L^{4}(\R^n)}\lesssim \norm{f}_{L^{2}(\R^{n})}.
\]
Therefore \eqref{estimak2} and \eqref{eq:bound2}  yield
\[
\|T_{\sigma}^{\phase_1,\phase_2}(f, g)\|_{L^2(\R^n)} \lesssim \norm{g}_{L^\infty(\R^n)} \sum_{k\in \Z^n} (1+|k|)^{-N} \norm{f}_{L^{2}(\R^n)}\lesssim \norm{g}_{L^\infty(\R^n)}\norm{f}_{L^{2}(\R^n)}.
\]
\end{proof}

\section{The high frequency case: Banach endpoints}\label{sect:high}

To deal with the term $T_{\sigma}^{\phase_1,\phase_2}(\mu(D)f,\mu(D)g)$ from \eqref{eq:first_step} let us introduce two smooth cut-off functions $\chi,\nu \colon \R^{2n} \to \R$, such that $\chi(\xi,\eta) = 1$ for $|(\xi,\eta)| \leq 1$ and $\chi(\xi,\eta) = 0$ for $|(\xi,\eta)| \geq 2$, and $\nu(\xi,\eta) = 0$ for $\lambda^2|\xi| \leq 16 |\eta|$ and $\nu(\xi,\eta) = 1$ for $64|\eta| \leq \lambda^2|\xi|$.

Defining
\begin{align*}
\sigma_1(x,\xi,\eta) & = (1-\chi(\xi,\eta))\nu(\xi,\eta)\sigma(x,\xi,\eta) \quad \text{and} \\
\sigma_2(x,\xi,\eta) & = (1-\chi(\xi,\eta))(1-\nu(\xi,\eta))\sigma(x,\xi,\eta),
\end{align*}
we have that $\sigma_1,\sigma_2 \in S^m_{1,0}(n,2)$ and we can decompose
\begin{equation} \label{low-high}
T_{\sigma}^{\phase_1,\phase_2}(\mu(D)f,\mu(D)g) = T_{\sigma_1}^{\phase_1,\phase_2}(\mu(D)f,\mu(D)g)+ T_{\sigma_2}^{\phase_1,\phase_2}(\mu(D)f,\mu(D)g).
\end{equation}

One introduces an even real-valued smooth function $\psi$ whose Fourier transform is supported on the annulus $\{\xi \, | \, 1/2 \leq |\xi| \leq 2\}$ such that
\begin{equation}\label{eq:annulus}
\int_0^\infty |\widehat{\psi}(t\xi)|^2 \frac{\dd t}{t} = 1
\end{equation}
for $\xi \neq 0$. Let $\theta$ be another real-valued smooth function whose Fourier transform is equal to one on the ball $\{\xi \, | \, |\xi| \leq 1/8\}$ and supported in $\{\xi \, | \, |\xi| \leq 1/4\}$.
Here and in the sequel we denote $\mathcal{K} = \supp_x \sigma$.

We have that
\begin{equation} \label{rep1}
\begin{aligned}
& T_{\sigma_1}^{\phase_1,\phase_2}(\mu(D)f,\mu(D)g)(x) \\
& = \iiint_0^\infty\ \sigma_{1,t}(x,t\xi,t\eta)\left(\widehat{\psi}(t\nabla_x\phase_1(x,\xi))\abs{\nabla_x\phase_1(x,\xi)}^{m}\mu(\xi)\widehat{f}(\xi)e^{i\phase_1(x,\xi)}\right) \times \\
& \quad \times \left(\widehat{\theta}(t\nabla_x\phase_2(x,\eta))\mu(\eta)\widehat{g}(\eta)e^{i\phase_2(x,\eta)}\right) \frac{\dd t}{t}\ddd\xi\ddd \eta,
\end{aligned}
\end{equation}
for
\[
\sigma_{1,t}(x,\xi,\eta) := t^{m}\sigma_1(x,\xi/t,\eta/t)\abs{\nabla_x\phase_1(x,\xi)}^{-m}\widehat{\psi}(\nabla_x\phase_1(x,\xi))\widehat{\theta}(\nabla_x\phase_2(x,\eta)).
\]
We define
\begin{equation}\label{eq:newsigma1}
	\newsigma_1(t,x,\xi,\eta):=\sigma_{1,t}(x,\Psi_1(x,\xi), \Psi_2(x,\eta)),
\end{equation}
where
\[
	\Psi_j(x,\xi)=\brkt{\nabla_x\varphi_j\brkt{x,\cdot}}^{-1}(\xi), \quad j=1,2,
\]
whose existence follows from the assumption of compact support in $x$, homogeneity of degree one in $\xi$ and the non-degeneracy.

\begin{lem}\label{lem:technic_1} For any multiindices $\alpha$, $\beta$ and $\gamma$,
\[
	\sup_{0<t<1}\sup_{x\in \mathcal{K}}\sup_{\xi,\eta\in\R^n\setminus \{0\}}\abs{\d^\alpha_\xi \d^\beta_\eta \d^\gamma_x \newsigma_1(t,x,\xi,\eta)}\lesssim 1.
\]
\end{lem}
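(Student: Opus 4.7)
The plan is to reduce $\newsigma_1$ to a form where the $t^{-1}$ scaling is explicit and then estimate by combining the chain rule with the symbol bounds of $\sigma_1$. Using the defining identity $\nabla_x\varphi_j(x,\Psi_j(x,\xi))=\xi$ ($j=1,2$), the expression \eqref{eq:newsigma1} collapses to
\[
\newsigma_1(t,x,\xi,\eta)=t^{m}\,|\xi|^{-m}\,\widehat\psi(\xi)\,\widehat\theta(\eta)\,\sigma_1\!\bigl(x,\Psi_1(x,\xi)/t,\Psi_2(x,\eta)/t\bigr).
\]
The cut-offs $\widehat\psi,\widehat\theta$ restrict attention to $\tfrac12\leq|\xi|\leq 2$ and $|\eta|\leq\tfrac14$, where the factor $|\xi|^{-m}\widehat\psi(\xi)\widehat\theta(\eta)$ is smooth, compactly supported, and bounded together with all its $\xi,\eta$-derivatives. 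Outside this region the lemma is trivial, so it suffices to control the remaining factor.

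Next, I would record the regularity of $\Psi_j$. Since $\nabla_x\varphi_j(x,\cdot)$ is smooth on $\mathcal K\times(\R^n\setminus\{0\})$ and positively homogeneous of degree one, so is its inverse $\Psi_j(x,\cdot)$. Combined with \eqref{asymptotic condition1}, this gives $\lambda|\xi|\leq|\Psi_j(x,\xi)|\leq\lambda^{-1}|\xi|$, and each mixed derivative $\partial_x^{\gamma'}\partial_\xi^{\alpha'}\Psi_j$ is positively homogeneous of degree $1-|\alpha'|$ in $\xi$, hence uniformly bounded on $\mathcal K\times\{1/2\leq|\xi|\leq 2\}$.

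Setting $s=1/t>1$ and applying Faà di Bruno together with Leibniz to $\partial_\xi^\alpha\partial_\eta^\beta\partial_x^\gamma \sigma_1(x,s\Psi_1,s\Psi_2)$, every resulting term has the form
\[
s^{|\alpha'|+|\beta'|}\,\bigl(\partial_\zeta^{\alpha'}\partial_\omega^{\beta'}\partial_x^{\gamma'}\sigma_1\bigr)\!\bigl(x,s\Psi_1(x,\xi),s\Psi_2(x,\eta)\bigr)\,P(x,\xi,\eta),
\]
with $|\alpha'|\leq|\alpha|$, $|\beta'|\leq|\beta|$, $|\gamma'|\leq|\gamma|$ and $P$ a bounded polynomial in the derivatives of $\Psi_1,\Psi_2$. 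The symbol estimate for $\sigma_1\in S^m_{1,0}(n,2)$ bounds the middle factor by $(1+s|\Psi_1|+s|\Psi_2|)^{m-|\alpha'|-|\beta'|}$, and on the support of the cut-offs one has $1+s|\Psi_1|+s|\Psi_2|\lesssim_\lambda 1+s$. Multiplying by the prefactor $t^m=s^{-m}$ one arrives at the key bound
\[
s^{-m}\cdot s^{|\alpha'|+|\beta'|}\cdot(1+s)^{m-|\alpha'|-|\beta'|}\lesssim 1\qquad\text{for }s\geq 1,
\]
uniformly in $\alpha',\beta'$, which is the heart of the estimate.

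The principal obstacle is merely combinatorial bookkeeping: one has to verify that every term produced by the Faà di Bruno expansion exhibits the correct balance between the powers of $s$ coming from the chain rule and the order gain supplied by $S^{m}_{1,0}(n,2)$, with no residual $s$-growth left over. The strong non-degeneracy \eqref{asymptotic condition1} plays a double role in making this balance possible, both ensuring the smoothness and tame homogeneity of $\Psi_j$ and guaranteeing that $s|\Psi_j(x,\xi)|$ is comparable to $s|\xi|$ on the support of $\widehat\psi\,\widehat\theta$.
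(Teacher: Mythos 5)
Your proof follows essentially the same route as the paper's: simplify $\newsigma_1$ via $\nabla_x\varphi_j(x,\Psi_j(x,\xi))=\xi$, use that $\Theta=(\Psi_1,\Psi_2)$ is a smooth, degree-one homogeneous, non-degenerate change of variables (the paper cites H\"ormander, Prop.~1.1.7, where you argue by hand with Fa\`a di Bruno), and then balance the $s^{|\alpha'|+|\beta'|}$ coming from the chain rule against the decay $(1+s|\Psi_1|+s|\Psi_2|)^{m-|\alpha'|-|\beta'|}$ from the symbol class. Two small slips, neither of which damages the argument: (1) since the exponent $m-|\alpha'|-|\beta'|\le 0$ (here $m\le 0$), the inequality you need on the support of $\widehat\psi\widehat\theta$ is $1+s|\Psi_1|+s|\Psi_2|\gtrsim_\lambda 1+s$ (a \emph{lower} bound on the base, which holds because $|\xi|\ge 1/2$ there), not the $\lesssim$ you wrote; in fact the two sides are comparable, so the conclusion survives. (2) The bookkeeping ``$|\alpha'|\le|\alpha|$, $|\beta'|\le|\beta|$'' is too restrictive: an $x$-derivative falling on the inner arguments $s\Psi_j(x,\cdot)$ also produces a $\zeta$- or $\omega$-derivative of $\sigma_1$ together with one factor of $s$, so all one can assert is $|\alpha'|+|\beta'|\le|\alpha|+|\beta|+|\gamma|$ with a matching power $s^{|\alpha'|+|\beta'|}$. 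Since your key estimate $s^{-m}s^{|\alpha'|+|\beta'|}(1+s)^{m-|\alpha'|-|\beta'|}\lesssim 1$ is uniform in $(\alpha',\beta')$, this doesn't change the outcome, but the statement as written is inaccurate.
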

\begin{proof} Consider the map $(x,\xi,\eta)\mapsto (x,\Theta(x,\xi,\eta))$ where $\Theta=(\Psi_1(x,\xi),\Psi_2(x,\eta))$, which is a smooth map, positively homogeneous of degree 1 in $(\xi,\eta)$ and is such that
\[
	\abs{\xi}+\abs{\eta}\thickapprox \abs{\Theta(x,\xi,\eta)} \qquad {\text{for all } x\in \mathcal{K},\quad \xi,\eta\in \R^n\setminus \{0\}}.
\]
Since $\sigma_1(x,\xi,\eta)\in S^m_{1,0}(n,2)$, Proposition  1.1.7 in
\cite{H2} yields
\[
	\abs{\d^\alpha_\xi \d^\beta_\eta \d^\gamma_x \sigma_1(x,\Theta(x,\xi,\eta))}\lesssim (1+\abs{\xi}+\abs{\eta})^{m-\abs{\alpha}-\abs{\beta}}\quad {\text{for all } x\in \mathcal{K},\, \xi,\eta\in \R^n\setminus \{0\}}.
\]
Therefore, chain rule yields
\[
	\abs{\d^\alpha_\xi \d^\beta_\eta \d^\gamma_x \brkt{\sigma_1(x,t^{-1}\Theta(x,\xi,\eta))}}\lesssim t^{-\abs{\alpha}-\abs{\beta}}(1+t^{-1}(\abs{\xi}+\abs{\eta}))^{m-\abs{\alpha}-\abs{\beta}}\leq t^{-m}\abs{\xi}^{^{m-\abs{\alpha}-\abs{\beta}}}.
\]
The result follows by applying the Leibniz rule to the definition \eqref{eq:newsigma1} of $\newsigma_1$, and using the above estimate and the assumption of the compact support on $\xi$ and $\eta$.
\end{proof}
Using the Fourier inversion formula,
\begin{equation} \label{rep2}
\newsigma_{1}(t,x,\xi,\eta) = \iint e^{i\xi\cdot u + i\eta\cdot v} m(t,x,u,v) \frac{\dd u \dd v}{(1 + |u|^2 + |v|^2)^N}
\end{equation}
where
\begin{equation} \label{eq:exp_m}
m(t,x,u,v) := \iint e^{-i\xi\cdot u - i\eta\cdot v} \big[(1-\Delta _\xi-\Delta _\eta)^N\newsigma_{1}(t,x,\xi,\eta)\big] \ddd\xi\ddd \eta
\end{equation}
for any large fixed $N \in \N$. Since the $(\xi,\eta)$-support of $\newsigma_{1}$ is contained in a compact set independent of $t$ and $x$, Lemma \ref{lem:technic_1} yields that for all multi-indices $\gamma$
\begin{equation}\label{eq:boundedness_of_m}
	\sup_{0<t<1}\sup_{x\in \mathcal{K}}\sup_{u,v\in\R^n} \abs{\partial_x^\gamma m(t,x,u,v)}<+\infty.
\end{equation}
Then, we can write
\begin{equation} \label{rep3}
\begin{aligned}
\sigma_{1,t}(x,\xi,\eta)= \iint e^{i\nabla_x\phase_1(x,\xi)\cdot u + i\nabla_x\phase_2(x,\eta)\cdot v}m(t,x,u,v) \frac{\dd u \dd v}{(1 + |u|^2 + |v|^2)^N}.
\end{aligned}
\end{equation}
Combining \eqref{rep1} and \eqref{rep3} we arrive at the representation
\begin{equation} \label{three}
\begin{aligned}
T_{\sigma_1}^{\phase_1,\phase_2}(\mu(D)f,\mu(D)g)(x)= \iiint_0^\infty T^{\phase_1}_{\nu_1^{t,u}}(f)(x) T^{\phase_2}_{\mu_1^{t,v}}(g)(x) \frac{m(t,x,u,v)}{(1 + |u|^2 + |v|^2)^N} \frac{\dd t}{t} \dd u \dd v
\end{aligned}
\end{equation}
for any large fixed $N \in \N$. Here
\begin{align}
\label{eq:mu1}\mu_1^{t,v}(x,\eta) & = \mu(\eta)e^{it\nabla_x\phase_2(x,\eta)\cdot v}\widehat{\theta}(t\nabla_x\phase_2(x,\eta))\chi(x),\\
\label{eq:nu1}\nu_1^{t,u}(x,\xi) & = \mu(\xi)\abs{\nabla_x\phase_1(x,\xi)}^{m}e^{it\nabla_x\phase_1(x,\xi)\cdot u}\widehat{\psi}(t\nabla_x\phase_1(x,\xi))\chi(x),
\end{align}
where $\chi\in C^\infty_{0}(\R^n)$ and is equal to $1$ for $x\in \mathcal{K}$.

Since $\mu(\xi) = 0$ for $|\xi| \leq 1/(4\lambda)$ and $\widehat{\psi}(t\nabla_x\phase_1(x,\xi)) = 0$ when $|t\nabla_x\phase_1(x,\xi)| \geq 1/4$, then $\mu(\xi)\widehat{\psi}(t\nabla_x\phase_1(x,\xi)) = 0$ for $t > 1$. Consequently we know $T^{\phase_1}_{\mu_1^{t,v}}(f)(x) = 0$ for $t > 1$. Using this fact, \eqref{three} reduces to
\begin{equation} \label{three2}
\begin{aligned}
& T_{\sigma_1}^{\phase_1,\phase_2}(\mu(D)f,\mu(D)g)(x) \\
& = \iiint_0^1 T^{\phase_1}_{\nu_1^{t,u}}(f)(x) T^{\phase_2}_{\mu_1^{t,v}}(g)(x) \frac{m(t,x,u,v)}{(1 + |u|^2 + |v|^2)^N} \frac{\dd t}{t} \dd u \dd v
\end{aligned}
\end{equation}
and, repeating the argument for $\sigma_2$,
\begin{equation} \label{three1}
\begin{aligned}
& T_{\sigma_2}^{\phase_1,\phase_2}(\mu(D)f,\mu(D)g)(x) \\
& = \iiint_0^1 T^{\phase_1}_{\mu_2^{t,u}}(f)(x) T^{\phase_2}_{\nu_2^{t,v}}(g)(x) \frac{m(t,x,u,v)}{(1+|u|^2+|v|^2)^N} \frac{\dd t}{t} \dd u \dd v,
\end{aligned}
\end{equation}
where
\begin{align}
\label{eq:mu2}\mu_2^{t,u}(x,\xi) & = \mu(\xi)e^{it\nabla_x\phase_1(x,\xi)\cdot u}\widehat{\theta}_0(t\nabla_x\phase_1(x,\xi)))\chi(x), \\
\label{eq:nu2}\nu_2^{t,v}(x,\eta) & = \mu(\eta)\abs{\nabla_x\phase_2(x,\eta)}^{m}e^{it\nabla_x\phase_2(x,\eta)\cdot v}\widehat{\psi}(t\nabla_x\phase_2(x,\eta)))\chi(x),
\end{align}
where $\widehat{\theta}_0$ is supported on a ball centred at the origin, but unlike the support of $\widehat{\theta}$ this ball may be large compared to the support of $\widehat{\psi}$.

Since the target space $L^r$ is Banach, it is enough to see the desired boundedness for the inner integrals in \eqref{three2} and \eqref{three1}, with norm growing at most polynomially in $\abs{u}$ and $\abs{v}$.
Observe that if we where in the pseudodifferential case, the amplitudes above would reduce to Fourier multipliers and both inner integrals in \eqref{three2} and \eqref{three1}, could essentially be expressed as paraproducts.

Therefore, our initial goal is to reduce matters to the study of paraproducts. However as we shall see later, this is only possible in dimension one and in the case of the $L^2\times L^2\to L^1$ boundedness of operators of order zero.

A useful tool for this reduction is the following theorem, which is a parameter dependent version of H\"ormander's result on the composition of pseudodifferential and Fourier integral operators (see e.g. \cite{H2}). The advantage here over H\"ormander's classical result, which cannot be obtained directly from the latter, is that
we are able to get a favourable estimate of the dependence on the parameter for those terms beyond the highest order term, which is of utmost importance in implementing the arguments we use in the proof of the main theorem. Furthermore, this theorem also improves substantially on the composition formula in \cite{RRS}, since all the terms (with the exception of the main one) in the resulting expansion are also symbols with better decay compared to the main term.

\begin{thm}\label{left composition with pseudo}
Let $m \leq 0$, $\varepsilon \in (0,1/2)$ and $\Omega=\R^n \times \set{\abs{\xi}>1}$. Suppose that $(x,\xi) \mapsto a_t(x,\xi)$ belongs to $S^m_{1,0}(n,1)$ uniformly in $t \in (0,1)$ and it is supported in $\Omega$, $(x,\xi) \mapsto \rho(\xi)$ belongs to $S^{0}_{1,0}(n,1)$ and $\phase \in \mathcal{C}^{\infty}(\Omega)$ is such that
\begin{enumerate}[label={\upshape (\roman*)}]

\item \label{partone}for constants $C_1, C_2 > 0$,  $C_1 |\xi|\leq |\nabla_{x} \phase(x,\xi)| \leq C_2 |\xi|$ for all $(x,\xi) \in \Omega$, and

\item \label{parttwo}for all $|\alpha|,|\beta|\geq 1$  $|\partial^{\alpha}_{x} \phase(x,\xi)|\lesssim \langle \xi \rangle$ and $|\partial^{\alpha}_{\xi}\partial^{\beta}_{x} \phase(x,\xi)|\lesssim\abs{\xi}^{1-\abs{\alpha}},$ for all $(x,\xi) \in \Omega$.
\end{enumerate}
Let us consider the parameter dependant Fourier multiplier and the FIO
\[
\rho(tD) f(x)= \int \rho(t\xi)\widehat{f}(\xi)e^{ix\cdot\xi}\ddd \xi\quad \mbox{and}\quad T_{a_t}^{\phase}(f)(x) = \int a_t(x,\xi)\widehat{f}(\xi)e^{i\phase(x,\xi)} \ddd\xi.
\]
Let $\sigma_t$ be the amplitude of the composition operator $\rho(tD) T_{a_t}^{\phase}={T_{\sigma_t}^\phase}$ given by
\[
	\sigma_{t}(x,\xi):= \iint a_t(y,\xi)\rho(t\eta) e^{i(x-y)\cdot\eta + i\phase(y,\xi)-i\phase(x,\xi)} {\mathrm{\ddd}} \eta \dd y.
\]
Then, for each $M\geq 1$,  we can write $\sigma_t$ as
\begin{equation}  \label{main left composition estim}
\sigma_{t}(x,\xi) = \rho(t\nabla_{x}\phase(x,\xi))a_t(x,\xi) + \sum_{0<|\alpha| < M}\frac{t^{|\alpha|}}{\alpha!}\, \sigma_{\alpha}(t,x,\xi)+t^{M\varepsilon } r(t,x,\xi) \quad %
\end{equation}
for $t\in (0,1)$. Moreover, for all multi-indices $\beta,\gamma$ one has
\begin{equation}\label{eq:statement_1}
	 \abs{\d^\gamma_\xi \d_x^\beta \sigma_\alpha(t,x,\xi)}\lesssim  t^{\abs{\alpha}(\varepsilon -1)} \bra{\xi}^{m-\abs{\alpha}\brkt{\frac{1}{2}-\varepsilon }-\abs{\gamma}} \quad \text{for\quad  $0<\abs{\alpha}<M$},
\end{equation}
and
\begin{equation}\label{eq:statement_2}
	\sup_{t\in (0,1)}|\partial^{\gamma}_{\xi} \partial^{\beta}_{x} r(t,x,\xi)|  \lesssim \bra{\xi}^{m-M\brkt{\frac{1}{2}-\varepsilon }-\abs{\gamma}}.
\end{equation}
\end{thm}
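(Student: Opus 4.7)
My plan is to adapt the Kumano-go–H\"ormander composition asymptotic to the parameter-dependent setting, tracking carefully how the scaling parameter $t$ enters each term. Start by writing the composition amplitude as
\[
	\sigma_t(x,\xi)=\iint a_t(y,\xi)\,\rho(t\eta)\,e^{i[(x-y)\cdot\eta+\phase(y,\xi)-\phase(x,\xi)]}\ddd\eta \dd y,
\]
linearise the phase in $y$ via $\phase(y,\xi)-\phase(x,\xi)=(y-x)\cdot\nabla_x\phase(x,\xi)+R(x,y,\xi)$, and shift $\eta\mapsto\eta+\nabla_x\phase(x,\xi)$. The phase becomes $(x-y)\cdot\eta+R(x,y,\xi)$, where $\partial_y^\beta R|_{y=x}=\partial_x^\beta\phase(x,\xi)$ for $|\beta|\geq 2$ and vanishes for $|\beta|\leq 1$.

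Next, apply Taylor's theorem with integral remainder to $\rho(t\eta+t\nabla_x\phase(x,\xi))$ about $t\nabla_x\phase(x,\xi)$ to order $M$. For each polynomial term $0\leq|\alpha|<M$, transfer the factor $(t\eta)^\alpha$ onto $a_t(y,\xi)e^{iR(x,y,\xi)}$ by integration by parts in $y$ using $(t\eta)^\alpha e^{i(x-y)\cdot\eta}=(it)^{|\alpha|}\partial_y^\alpha e^{i(x-y)\cdot\eta}$; the residual $\eta$-integral of $e^{i(x-y)\cdot\eta}$ collapses $y$ to $x$. The $\alpha=0$ contribution yields the main term $\rho(t\nabla_x\phase(x,\xi))a_t(x,\xi)$, while for $0<|\alpha|<M$ one obtains
\[
	\sigma_\alpha(t,x,\xi)=(-i)^{|\alpha|}(\partial^\alpha\rho)(t\nabla_x\phase(x,\xi))\cdot\partial_y^\alpha\bigl[a_t(y,\xi)e^{iR(x,y,\xi)}\bigr]_{y=x}.
\]
A Fa\`{a} di Bruno expansion of the $y$-derivative factor writes it as a sum over partitions of $\alpha$ into an $a_t$-block $\alpha'$ and $R$-pieces $\beta_1,\dots,\beta_k$ with $|\beta_j|\geq 2$; using \ref{parttwo} and $a_t\in S^m_{1,0}$, each term is $\lesssim \bra{\xi}^{k+m-|\alpha'|}\leq \bra{\xi}^{m+|\alpha|/2}$, the worst case being $k=\lfloor|\alpha|/2\rfloor$. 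Combining with $|\partial^\alpha\rho(z)|\lesssim\bra{z}^{-|\alpha|}$ and $|t\nabla_x\phase|\asymp t|\xi|$ from \ref{partone} gives the pointwise estimate $|\sigma_\alpha|\lesssim\bra{t\xi}^{-|\alpha|}\bra{\xi}^{m+|\alpha|/2}$. A direct case analysis in the regimes $t|\xi|\gtrsim 1$ and $t|\xi|\lesssim 1$ shows that this implies \eqref{eq:statement_1} for any $\varepsilon\in(0,1/2)$; the $\xi$- and $x$-derivative bounds follow by differentiating under the integral, each $\xi$-derivative producing a $\bra{\xi}^{-1}$ gain thanks to the degree-$0$ homogeneity of $\partial_\xi\nabla_x\phase$ and the symbol bounds on $\rho$.

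The hard part is the remainder $r(t,x,\xi)$ produced by the integral remainder of the Taylor expansion of $\rho$. The factor $(t\eta)^M$ can still be converted to $y$-derivatives, but $(\partial^M\rho)(st\eta+t\nabla_x\phase(x,\xi))$ retains $\eta$-dependence, so the $\eta$-integral no longer collapses. My plan is to handle this by a dyadic split of the $\eta$-integral at the scale $|\eta|\sim \bra{\xi}^{\varepsilon}$: on the inner part use $|(\partial^M\rho)(st\eta+t\nabla_x\phase)|\lesssim\bra{st\eta+t\nabla_x\phase}^{-M}$ together with the lower bound $\bra{st\eta+t\nabla_x\phase}\gtrsim \bra{t\xi}$ (valid once $\bra{\xi}^\varepsilon\ll|\xi|$); on the outer part, integrate by parts in $y$ via $(1-\Delta_y)e^{i(x-y)\cdot\eta}=(1+|\eta|^2)e^{i(x-y)\cdot\eta}$ sufficiently many times to gain arbitrary polynomial decay in $|\eta|$, paying for $y$-derivatives of $a_t\,e^{iR}$ that are controlled by \ref{parttwo} and by the symbol bounds on $a_t$. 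Balancing the two contributions against the $(1/2-\varepsilon)$-budget per derivative and absorbing the $t^{M\varepsilon}$ prefactor yields \eqref{eq:statement_2}. The delicate book-keeping is that each pair of $y$-derivatives landing on $e^{iR}$ can cost a factor $\bra{\xi}$, so the balance is tight and must be priced carefully against the $\varepsilon$-dependent gains from each of the two regions.
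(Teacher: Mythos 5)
Your reduction of the main terms $\sigma_\alpha$ (change of variables $\eta\mapsto\eta+\nabla_x\phase(x,\xi)$, Taylor expansion of $\rho$ to order $M$, collapse of the $\eta$-integral, Fa\`a di Bruno for $\partial_y^\alpha[a_t e^{iR}]|_{y=x}$ and the case analysis $t|\xi|\gtrless 1$) matches the paper's treatment of its near-diagonal piece, so \eqref{eq:statement_1} is essentially recovered the same way. However, you have omitted the paper's initial spatial cutoff decomposition $\sigma_t=\textbf{I}_1+\textbf{I}_2$ with $\chi(x-y)$, and this omission is not cosmetic: the original iterated integral defining $\sigma_t$ is not absolutely convergent (the $\eta$-integrand has no decay and $a_t$ does not decay in $y$), and the cutoff is exactly what makes the near-diagonal $y$-integral finite while the far-diagonal piece is handled by a separate $L_\eta$, $L_y$ integration-by-parts argument that gains from $|x-y|>\epsilon/2$.

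The genuine gap is in the remainder. Your split of the $\eta$-integral at $|\eta|\sim\bra{\xi}^\varepsilon$ and the plan to integrate by parts with $(1-\Delta_y)e^{i(x-y)\cdot\eta}=(1+|\eta|^2)e^{i(x-y)\cdot\eta}$ do not close. Each application of $(1-\Delta_y)$ gains a factor $(1+|\eta|^2)^{-1}$ but, since $R$ has first and second $y$-derivatives of size $\bra{\xi}$ by \ref{parttwo}, costs a factor $\bra{\xi}^2$ whenever it lands on $e^{iR}$; on your outer region the net factor per application is therefore $\gtrsim\bra{\xi}^{2-2\varepsilon}>1$ (and even with a split at scale $\bra{\xi}$ the net factor is $O(1)$, which is not a gain), so no number of integrations by parts produces the required decay. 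The paper avoids this by splitting at scale $|\zeta|\sim\bra{\xi}$ and, on the outer part, integrating by parts with respect to the \emph{full} phase $\Psi(x,y,\xi,\zeta)=(x-y)\cdot\zeta+\Phi(x,y,\xi)$ rather than just $(x-y)\cdot\zeta$; the oscillating factor $e^{i\Phi}$ is then never differentiated. Crucially, the lower bound $|\nabla_y\Psi|\gtrsim|\zeta|+\bra{\xi}$ that makes this non-stationary-phase argument work requires both $|\zeta|\gtrsim\bra{\xi}$ \emph{and} the spatial cutoff $|x-y|<\epsilon$ so that $|\nabla_y\phase(y,\xi)-\nabla_x\phase(x,\xi)|\lesssim\epsilon\bra{\xi}$ is small compared with $|\zeta|$. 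Your choice of threshold $\bra{\xi}^\varepsilon$ destroys this lower bound, and dropping the spatial cutoff removes the other half of it. The inner-region estimate also needs a decay mechanism in $|x-y|$ to make the $y$-integral converge; the paper gets it from the operator $L_\zeta=(1-\bra{\xi}^2\Delta_\zeta)/(1+\bra{\xi}^2|x-y|^2)$, which exploits the $\bra{\xi}$-scale of the $\zeta$-support, and your plan does not address this.
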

\begin{rem}
It is worth mentioning that although in the applications of the above theorem, the operator $\rho(tD)$ will be a Littlewood-Paley operator $($i.e. $\rho(\xi)$ will be either ball-or annulus-supported$)$, Theorem $\ref{left composition with pseudo}$ doesn't put any conditions on the support of $\rho$. Therefore, the parameter $t$ and the frequency variable $\xi$ are entirely independent of each other.
\end{rem}

\subsection*{Notation} We shall use the notation
\begin{equation}\label{eq:PsandQs}
	Q_t^{u}:=\widehat{\Psi^u}\brkt{tD} \quad \mbox{and}\quad P_t^{u}:=\widehat{\Theta^u}\brkt{tD},
\end{equation}
where
\[
	\widehat{\Psi^u}(\xi):=\widehat{\psi}(\xi) e^{i \xi\cdot u},\qquad \widehat{\Theta^u}(\xi):=\widehat{\theta}(\xi) e^{i \xi\cdot u},
\]
and $\widehat{\psi},\widehat{\theta}\in C^\infty_0(\R^n)$ are any functions supported in an annulus and in a ball respectively. In the case that $u=0$ we shall omit the superscript in the above notation.

\subsection{Boundedness of $T_{\sigma_1}^{\phase_1,\phase_2}$: Endpoint Banach cases}\label{sec:Banach_boundedness_result}

From now on, we confine ourselves to the study of the boundedness of the operator $T_{\sigma_1}^{\phase_1,\phase_2}$.  Indeed, in the frequency support of $T_{\sigma_1}^{\phase_1,\phase_2}$, one has $\abs{\eta}\lesssim\abs{\xi}$, and since the situation is reversed for $T_{\sigma_2}^{\phase_1,\phase_2}$, the boundedness of  the latter will follow from that of the former and symmetry considerations.

Taking Theorem \ref{left composition with pseudo} (whose proof is postponed until Section \ref{sec:proof_of_asymptotic}) for granted, we shall now proceed with the proof of the main theorem for $T_{\sigma_1}$. To this end let
\begin{equation*}
	 m_1=-(n-1)\abs{\frac{1}{p}-\frac{1}{2}},\qquad \text{and}\qquad m_2=-(n-1)\abs{\frac{1}{q}-\frac{1}{2}}.
\end{equation*}
Theorem \ref{left composition with pseudo} yields
\begin{equation}\label{eq:lack_of_coffe_1}
	\begin{split}
	T^{\phase_1}_{\nu_1^{t,u}}(f)(x) &=\int  \mu(\xi)\abs{\nabla_x\phase_1(x,\xi)}^{m}e^{it\nabla_x\phase_1(x,\xi)\cdot u}\widehat{\psi}(t\nabla_x\phase_1(x,\xi)))\chi(x) \widehat{f}(\xi) e^{i\varphi_1(x,\xi)}\ddd \xi\\
	&=t^{-m_2}Q_t^u T_{\nu_{m_1}}^{\varphi_1}f+ t^{-m_2} E_t^1(f)
	\end{split}
\end{equation}
where the symbol of $Q_t^u$ is given by
\[
	\abs{t\xi}^{m_2} \widehat{\psi}(t\xi)e^{it u\cdot\xi}\in \mathcal{S},
\]
$T_{\nu_{m_1}}^{\varphi_1}$ is an FIO with phase $\varphi_1$ and amplitude
\begin{equation}\label{eq:amplitude}
	\nu_{m_1}(x,\xi)=\mu(\xi)\chi(x)\abs{\nabla_x\phase_1(x,\xi)}^{m_1}\in S^{m_1}_{1,0}.
\end{equation}
The operator $E_t^1$ is an FIO with the same phase and has an amplitude in $S^{m_1-(\frac{1}{2}-\varepsilon )}_{1,0}$ with seminorms bounded by the product of $t^\varepsilon $ for $\varepsilon \in (0,1/2)$ with a polynomial expression in $\abs{u}$.

When $m_2\neq 0$, let $\widehat{\theta}_1$ be a compactly supported smooth bump function that is equal to $1$ on the support of $\widehat{\theta}$. When $m_2=0$ we will take $\widehat{\theta}_1$ to be  identically equal to $1$.

Applying Theorem \ref{left composition with pseudo} we have
\begin{equation}\label{eq:lack_of_coffe_2}
	t^{-m_2}T^{\phase_2}_{\mu_1^{t,v}}(g)(x)=P_t^v T_{\mu_1^t}^{\phase_2} g+E_t^2(g)(x),
\end{equation}
where the symbol of $P_t^v$ is given by $\widehat{\theta}(t\eta)e^{it v\cdot \eta}$, which belongs to  $S^{0}_{1,0}$ (uniformly in $t\in (0,1]$) with seminorms that behave polynomially in $\abs{v}$, the operator $T_{\mu_1^t}^{\phase_2}$ is an FIO with phase function $\phase_1$ and amplitude
\begin{equation}\label{eq:technic_amplitude}
	\mu_1^t(x,\eta)=\mu(\eta)\chi(x)\widehat{\theta_1}(t\nabla_x\phase_2(x,\eta))t^{-m_2}\in S^{m_2}_{1,0},
\end{equation}
uniformly in $t\in (0,1]$, and $E_t^2$ is an FIO with the same phase and an amplitude in $S^{m_2-(\frac{1}{2}-\varepsilon )}_{1,0}$ with seminorms bounded by the product of $t^\varepsilon $ and a polynomial in $\abs{v}$.

Therefore, using \eqref{eq:lack_of_coffe_1} and \eqref{eq:lack_of_coffe_2} in  \eqref{three} we obtain that
\begin{equation*}
	\begin{split}
		&\int_0^1 T^{\phase_1}_{\nu_1^{t,u}}(f)(x)T^{\phase_2}_{\mu_1^{t,v}}(g)(x) m(t,x) \frac{\dd t}{t}=\int_0^1 \brkt{Q_t^u T_{\nu_{m_1}}^{\varphi_1}f(x)} \brkt{P_t^v T_{\mu_1^t}^{\phase_2} g(x)} m(t,x) \frac{\dd t}{t}+\\
		&+\int_0^1 \brkt{Q_t^u T_{\nu_{m_1}}^{\varphi_1}f(x)}\brkt{E_t^2(g)(x)}m(t,x) \frac{\dd t}{t}
		+\int_0^1 E_t^1(f)(x)t^{-m_2} \brkt{T^{\phase_2}_{\mu_1^{t,v}}(g)(x)}m(t,x) \frac{\dd t}{t}\\
		&=I_1+I_2+I_3,
	\end{split}
\end{equation*}
where we have set $m(t,x) = m(t,x,u,v)$ for brevity.

From now on, we shall denote by $C(u)$, $C(v)$  and $C(u,v)$ universal constants which depend polynomially on $\abs{u}$ and $\abs{v}$.

Now we claim that for any $\varepsilon >0$,
\begin{equation}\label{eq:lack_of_cake_1}
	\sup_{t\in (0,1]}\norm{t^{-m_2+\frac{\varepsilon }{2}}T^{\phase_2}_{\mu_1^{t,v}}(g)}_{L^q}\leq C(v) \norm{g}_{L^q}, \quad \mbox{for $1 \leq q \leq \infty$.}
\end{equation}
Indeed
\[
	t^{-m_2+\frac{\varepsilon }{2}}e^{it \xi\cdot v}\widehat{\theta}(t\eta)\in S^{m_2-\varepsilon /2}_{1,0},
\]
uniformly in $t\in (0,1]$ with seminorms depending polynomially on $\abs{v}$. Then, using \cite{H2}*{Prop. 1.1.7} we get that
\[
	t^{-m_2+\frac{\varepsilon }{2}}\mu_1^{t,v}(x,\eta)\in S^{m_2-\varepsilon /2}_{1,0},
\]
which by {\ref{eq:DSS}} in Theorem \ref{linear_FIO} is sufficient to prove the claim. Moreover, it follows from the properties of the amplitude of $E^1_t$ and {\ref{eq:DSS}}  that for any fixed $\varepsilon \in (0,\frac{1}{2})$
\begin{equation}\label{eq:lack_of_cake_2}
	\norm{E_t^1 f}_{L^p}\leq C(u) t^{\varepsilon }\norm{f}_{L^p}.
\end{equation}

Since $1\leq r\leq \infty$, the Minkowski integral inequality, \eqref{eq:boundedness_of_m}, \eqref{eq:lack_of_cake_1} and \eqref{eq:lack_of_cake_2} yield
\[
	\begin{split}
	\norm{I_3}_{L^r}&\lesssim \int_0^1 \norm{E_t^1(f)}_{L^p}t^{\frac{-\varepsilon }{2}}\norm{t^{-m_2+\frac{\varepsilon }{2}} T^{\phase_2}_{\mu_1^{t,v}}(g)}_{L^q} \frac{\dd t}{t}\lesssim C(u,v)
	\brkt{\int_0^1 t^{\frac{\varepsilon }{2}-1}\dd t} \norm{f}_{L^p}\norm{g}_{L^q}.
	\end{split}
\]

To deal with $I_2$ assume first that $1<p<\infty$.
In this case, the Minkowski integral inequality and {\ref{eq:SSS}} in Theorem \ref{linear_FIO} yield
\begin{equation}\label{eq:lack_of_cake_3_1}
 	\norm{Q_t^u T_{\nu_{m_1}}^{\varphi_1}f}_{L^p}\lesssim \norm{f}_{L^p},
\end{equation}
uniformly in $t$. Therefore, combining this with the analogue of \eqref{eq:lack_of_cake_2} for $E^2_t$, the Minkowski integral inequality and H\"older's inequality imply
 \[
 	\norm{I_2}_{L^r}\lesssim \int_0^1 t^{\varepsilon-1} \dd t \norm{f}_{L^p}\norm{g}_{L^q}\lesssim \norm{f}_{L^p}\norm{g}_{L^q}.
 \]
With minor modifications to the above argument, one proves that
$\norm{I_2}_{L^1}\lesssim \norm{f}_{H^1}\norm{g}_{L^\infty}.$
For the case $p=\infty$, we observe that the result would follow if one has that
\begin{equation}\label{eq:lack_of_cake_3}
	\norm{Q_t^u T_{\nu_{m_1}}^{\varphi_1}f}_{L^\infty}\leq C(u) \norm{f}_{L^\infty},
\end{equation}
uniformly in $t$.
But the latter is a consequence of {\ref{eq:SSS_infty}} in Theorem  \ref{linear_FIO} and the following lemma, whose proof can be found in \cite{S}*{p. 161}.

\begin{lem}\label{lem:stein} Given $f\in \mathrm{BMO}$ there exists a constant $C$ such that for any $t>0$
\begin{equation}\label{eq:Q1}
	\norm{Q_t f}_{L^\infty}\leq C \norm{f}_{\mathrm{BMO}},
\end{equation}
and by duality, for any $f\in L^1$,
\begin{equation}\label{eq:Q2}	
	\norm{Q_t f}_{H^1}\leq C \norm{f}_{L^1}.
\end{equation}

\end{lem}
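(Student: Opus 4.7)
The plan is to establish \eqref{eq:Q1} directly using the cancellation built into $\psi$, and then obtain \eqref{eq:Q2} via the Fefferman-Stein $H^{1}$-$\mathrm{BMO}$ duality applied to the adjoint $Q_{t}^{*}$.

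For \eqref{eq:Q1}, I would start by writing $Q_{t}f(x) = \int t^{-n}\psi(t^{-1}(x-y))f(y)\,dy$, where $\psi$ is the Schwartz function whose Fourier transform is supported in an annulus bounded away from the origin. This support condition forces $\int \psi\,dx = \widehat{\psi}(0) = 0$, so for any constant $c$ one may write
\[
Q_{t}f(x) = \int t^{-n}\psi(t^{-1}(x-y))\bigl(f(y)-c\bigr)\,dy.
\]
Choosing $c = f_{B(x,t)}$, the average of $f$ over the ball $B(x,t)$, and decomposing $\R^{n}$ into $B(x,t)$ together with the dyadic annuli $A_{k} = B(x,2^{k}t)\setminus B(x,2^{k-1}t)$ for $k\geq 1$, I would exploit the Schwartz decay $|\psi(z)|\leq C_{N}(1+|z|)^{-N}$ together with the standard BMO telescoping bound $|f_{B(x,2^{k}t)} - f_{B(x,t)}|\lesssim k\|f\|_{\mathrm{BMO}}$ and the John-Nirenberg-type inequality $\|f - f_{B(x,2^{k}t)}\|_{L^{1}(B(x,2^{k}t))}\lesssim (2^{k}t)^{n}\|f\|_{\mathrm{BMO}}$. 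Each dyadic piece is then bounded by $C_{N}\,2^{-kN}\cdot 2^{kn} k \|f\|_{\mathrm{BMO}}$; summing over $k\geq 0$ with $N$ chosen large yields $|Q_{t}f(x)|\lesssim \|f\|_{\mathrm{BMO}}$ uniformly in $x$ and $t$, which is exactly \eqref{eq:Q1}.

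For \eqref{eq:Q2}, I would invoke the Fefferman-Stein theorem, which identifies $(H^{1})^{*}=\mathrm{BMO}$ and gives
\[
\|Q_{t}f\|_{H^{1}} \sim \sup_{\|g\|_{\mathrm{BMO}}\leq 1} |\langle Q_{t}f,g\rangle|,
\]
with the supremum taken over a suitable dense subclass of $\mathrm{BMO}$. By adjointness, $\langle Q_{t}f,g\rangle = \langle f, Q_{t}^{*}g\rangle$, where $Q_{t}^{*}$ is the Fourier multiplier with symbol $\overline{\widehat{\psi}(t\xi)}$. Since this symbol is supported in the same annulus, estimate \eqref{eq:Q1} applied to $Q_{t}^{*}$ gives $\|Q_{t}^{*}g\|_{L^{\infty}}\lesssim \|g\|_{\mathrm{BMO}}$. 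Consequently $|\langle Q_{t}f,g\rangle|\leq \|f\|_{L^{1}}\|Q_{t}^{*}g\|_{L^{\infty}}\lesssim \|f\|_{L^{1}}\|g\|_{\mathrm{BMO}}$, which yields \eqref{eq:Q2}.

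The main obstacle is the first step, and specifically controlling the contribution of the far dyadic annuli; the logarithmic growth $|f_{B(x,2^{k}t)} - f_{B(x,t)}|\lesssim k\|f\|_{\mathrm{BMO}}$ is the classical mechanism that makes the argument work, and is ultimately what encodes the $L^{\infty}$-$\mathrm{BMO}$ character of the estimate. The rest is essentially bookkeeping: verifying that the cancellation property $\widehat{\psi}(0)=0$ (a consequence of annulus support) allows the average subtraction, and that the Schwartz decay of $\psi$ dominates the polynomial growth $2^{kn}k$ from the BMO side.
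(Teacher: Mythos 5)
Your argument is correct and is essentially the standard proof found in the reference the paper cites (Stein, \emph{Harmonic Analysis}, p.~161): the cancellation $\widehat\psi(0)=0$ permits the BMO subtraction, dyadic annuli plus Schwartz decay handle the tails, and \eqref{eq:Q2} then follows by $H^1$-$\mathrm{BMO}$ duality applied to $Q_t^*$, which is of the same annulus type. The only point worth tightening is the duality step: one should note that $Q_t f$ is a priori in $L^1$ (Schwartz kernel convolved with an $L^1$ function), so the bound on the pairing against a suitable dense class of BMO representatives does indeed upgrade to $Q_t f\in H^1$ with the claimed norm; as written this is implicit but benign.
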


Finally, we need to deal with the boundedness of the term $I_1$. To this end we need to consider several cases separately.

\subsection*{Case I: $m=0$}

In particular, $m_2=0$, so in this case, we have taken $\widehat{\theta}_1$ to be the constant function equal to $1$ in \eqref{eq:technic_amplitude}. Thus we have
\begin{equation}\label{eq:eq:I1}
	I_1=\int_0^1 Q_t^u \brkt{T^{\varphi_1}_{\nu_{m_1}} f} P_t^v \brkt{T^{\varphi_2}_{\mu_1}g} \, m(t,x) \frac{\dd t}{t},
\end{equation}
where $\mu_1(x,\eta)=\mu(\eta)\chi(x)\in S^0_{1,0}$. That is, $I_1$ is the composition of a bilinear paraproduct with two FIOs.

In this connection, for the sake of completeness we would like to state and prove a lemma, which although somewhat implicit in the literature, is useful for what follows.
\begin{lem}\label{lem:Paraproduct} Consider the bilinear paraproduct
\[
	\Pi(f,g)(x)=\int_0^1Q_t^u \brkt{f} P_t^v \brkt{g}\, m(t,x,u,v) \frac{\dd t}{t},
\]
with $P_t^u$ and $Q_t^v$ as in \eqref{eq:PsandQs} and $m(t,x,u,v)$ smooth satisfying \eqref{eq:boundedness_of_m}. This operator is a bilinear pseudodifferential operator with symbol in $S^{0}_{1,0}(n,2)$ and seminorms depending polynomially on $\abs{u}$ and $\abs{v}$.
\end{lem}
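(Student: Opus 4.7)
My plan is to write $\Pi(f,g)$ as a bilinear pseudodifferential operator by inserting the Fourier-side definitions of $Q_t^u$ and $P_t^v$ and identifying its symbol as
\[
\tau(x,\xi,\eta) = \int_0^1 \widehat{\psi}(t\xi)\,\widehat{\theta}(t\eta)\,e^{it(\xi\cdot u+\eta\cdot v)}\, m(t,x,u,v)\,\frac{\dd t}{t},
\]
so that $\Pi(f,g)(x)=\iint \tau(x,\xi,\eta)\widehat{f}(\xi)\widehat{g}(\eta)e^{ix\cdot(\xi+\eta)}\ddd\xi\ddd\eta$. Then I need to verify the differential inequalities defining $S^{0}_{1,0}(n,2)$ for $\tau$.

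The key observation driving all the estimates is the annular support of $\widehat{\psi}$: since $\widehat{\psi}(t\xi)\neq 0$ forces $t|\xi|\in[1/2,2]$, the $t$-integral is effectively taken over an interval of length $\sim 1/|\xi|$, and in particular $\tau$ vanishes for $|\xi|<1/2$. Combined with the fact that $\widehat{\theta}(t\eta)\neq 0$ requires $t|\eta|\leq 1/4$, one gets $|\eta|\lesssim|\xi|$ on the support of $\tau$, so that $1+|\xi|+|\eta|\thickapprox|\xi|$ there. This is the crucial localisation.

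Next I differentiate under the integral sign. The $x$-derivatives only act on $m(t,x,u,v)$ and are uniformly bounded by \eqref{eq:boundedness_of_m}, so they contribute a constant $C_\gamma$. Each $\xi$-derivative acting on $\widehat{\psi}(t\xi)e^{it\xi\cdot u}$ produces, by the Leibniz rule, a factor of $t$ times either a derivative of $\widehat{\psi}$ (still annulus-supported) or the component $iu_j$. Thus
\[
\bigl|\partial_\xi^\alpha\bigl(\widehat{\psi}(t\xi)e^{it\xi\cdot u}\bigr)\bigr|\lesssim t^{|\alpha|}(1+|u|)^{|\alpha|}\mathbf{1}_{\{t|\xi|\thickapprox 1\}},
\]
and analogously for $\eta$-derivatives with $(1+|v|)^{|\beta|}$. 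Plugging these bounds in and using $t\thickapprox|\xi|^{-1}$ on the support, the total integrand is bounded by $(1+|u|)^{|\alpha|}(1+|v|)^{|\beta|}C_\gamma\, t^{|\alpha|+|\beta|-1}\mathbf{1}_{\{t|\xi|\thickapprox 1\}}$, whose integral in $t$ is of order $|\xi|^{-|\alpha|-|\beta|}\thickapprox(1+|\xi|+|\eta|)^{-|\alpha|-|\beta|}$. This yields
\[
\bigl|\partial_\xi^\alpha\partial_\eta^\beta\partial_x^\gamma\tau(x,\xi,\eta)\bigr|\lesssim C_{\alpha,\beta,\gamma}(1+|u|)^{|\alpha|}(1+|v|)^{|\beta|}(1+|\xi|+|\eta|)^{-|\alpha|-|\beta|},
\]
which is exactly the $S^{0}_{1,0}(n,2)$ condition with the stated polynomial dependence on $|u|$ and $|v|$.

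I do not foresee any serious obstacle: the argument is essentially a change of variables in $t$ dictated by the annular support of $\widehat{\psi}$. The only point that requires a moment of care is the interplay between the two frequency supports, which forces the needed decay simultaneously in $\xi$ and $\eta$; this is handled by the observation $|\eta|\lesssim|\xi|$ on $\mathrm{supp}\,\tau$, so the joint weight $(1+|\xi|+|\eta|)^{-|\alpha|-|\beta|}$ is controlled by the $|\xi|^{-|\alpha|-|\beta|}$ produced by the $\xi$-localisation alone.
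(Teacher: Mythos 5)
Your proof is correct and follows essentially the same route as the paper: identify the symbol $\tau(x,\xi,\eta)=\int_0^1\widehat{\psi}(t\xi)\widehat{\theta}(t\eta)e^{it(\xi\cdot u+\eta\cdot v)}m(t,x,u,v)\,\dd t/t$, exploit the annular support of $\widehat\psi$ to localise $t\thickapprox|\xi|^{-1}$ (and hence $|\eta|\lesssim|\xi|$, $|\xi|\gtrsim 1$), and then differentiate under the integral using the $t^{|\alpha|+|\beta|}$ gain from the chain rule and the uniform bounds \eqref{eq:boundedness_of_m} on $\d_x^\gamma m$. You are somewhat more explicit than the paper about tracking the polynomial factors $(1+|u|)^{|\alpha|}(1+|v|)^{|\beta|}$, but the argument is the same.
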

\begin{proof} Observe that one can easily write
\[
	\Pi(f,g)(x)=\iint \lambda(x,\xi,\eta) \widehat{f}(\xi)\widehat{g}(\eta)e^{ix\cdot(\xi+\eta)}\ddd \xi\ddd \eta,
\]
where
\[
	\lambda(x,\xi,\eta)=\int_0^1  \widehat{\psi}(t\xi)e^{it\xi \cdot v}\widehat{\theta}(t\eta)e^{it\eta \cdot u} m(t,x,u,v)\frac{\dd t}{t}.
\]
Then since $t\thickapprox \abs{\xi}^{-1}$, $\abs{\eta}\lesssim \abs{\xi}$ and $1\lesssim \abs{\xi}$, we have
\[
	\abs{\lambda(x,\xi,\eta)} \lesssim \int_{t\sim \abs{\xi}^{-1}} \frac{\dd t}{t}\thickapprox 1,
\]
and for $\abs{\alpha}+\abs{\beta}\geq 1$,
\[
	\abs{\d^\alpha_\xi\d^\beta_\eta\d^\gamma_x \lambda(x,\xi,\eta)} \lesssim C(u,v)\abs{\xi}^{-\abs{\alpha}-\abs{\beta}}\lesssim  C(u,v) \brkt{1+\abs{\xi}+\abs{\eta}}^{-\abs{\alpha}-\abs{\beta}}.
\]
\end{proof}
\begin{rem}
As is well-known, bilinear pseudodifferential operators with symbols in $S^{0}_{1,0}(n,2)$ are examples of bilinear Calder\'on-Zygmund operators, see e.g. \cite{GT1}. Therefore, $\Pi(f,g)$ satisfies all the boundedness properties that are listed in \cite{GK}*{Theorem 1.1}. These include boundedness on products of Banach spaces such as $L^p$, $H^1$ and $\mathrm{BMO}$.
\end{rem}
Thus we can write \eqref{eq:eq:I1} as
\begin{equation}\label{eq:paraproduct}
	I_1=\Pi(T^{\varphi_1}_{\nu_1}f,T^{\varphi_2}_{\mu_1}g).
\end{equation}

Since $m=0$, then either $n=1$ or $p=q=2$. If $p=q=2$, the $L^2\times L^2 \to L^1$ boundedness of $ T_{\sigma_1}^{\phase_1,\phase_2}(\mu(D)f,\mu(D)g)$ follows  from the previous lemma and the $L^2\to L^2$ boundedness given by \ref{eq:SSS} in Theorem \ref{linear_FIO}. Observe that in dimension $n=1$, the matters  reduce to the study of \eqref{eq:paraproduct} with
\[
	\mu_1(x,\cdot)=\nu_1(x,\cdot)=\chi(x)\mu(\cdot).
\]
This means that the boundedness of the bilinear FIO in the case $L^{p}\times L^{q}\to L^r$ for $1<p,q<\infty$ follows from Lemma \ref{lem:Paraproduct} and  \ref{eq:SSS} in Theorem \ref{linear_FIO}.
It will be shown in Section \ref{sect:1d} that the endpoint results for the one dimensional bilinear FIOs are false.  So, from now on we shall assume that $n\geq 2$.

\begin{rem}\label{rem:Remark} Observe that by a similar argument to above, if $m=-(n-1)\abs{\frac{1}{p}-\frac{1}{2}}$ for $1<p<\infty$ and $m_2=0$, we can obtain
the $L^p\times L^2\to L^r$  boundedness of  the operator $ T_{\sigma_1}^{\phase_1,\phase_2}(\mu(D)f,\mu(D)g)$. Moreover,  since paraproducts map $\mathrm{BMO}\times L^2\to L^2$ $($see e.g. \cite{CM4}*{Proposition 2}$)$, by \ref{eq:SSS_infty} in Theorem $\ref{linear_FIO}$ we would have the $L^\infty \times L^2\to L^2$ boundedness of $ T_{\sigma_1}^{\phase_1,\phase_2}(\mu(D)f,\mu(D)g)$ for $m=-\frac{n-1}{2}$.
\end{rem}

\subsection*{Case II: $m<0$}

By the previous remark, from now on, we can assume that $m_2<0$.

In order to continue the proof of the theorem we introduce another Littlewood-Paley partition of unity of the type
\begin{equation}\label{eq:L-P}
	1=\int_0^\infty \widehat{\psi}(s\nabla_{x}\phase_2(x,\eta))^2\, \frac{\dd s}{s}, \qquad \forall \eta\neq 0, x\in \R^n.
\end{equation}
In this way
\[
	T_{\mu_1^t} g(x)= \int_0^\infty S_{t,s} g(x)\,  \frac{\dd s}{s}
\]
where
\[
	S_{t,s}g(x)=\int \chi(x)\mu(\eta)\widehat{\theta_1}(t\nabla_x\phase_2(x,\eta))t^{-m_2}\widehat{\psi}(s\nabla_x\phase_2(x,\eta))^2 \widehat{g}(\eta) e^{i\phase_2(x,\eta)}\ddd \eta.
\]
The support properties of $\widehat{\theta_1}$ and $\widehat{\psi}$ imply that there exist positive constants $\kappa_1, \kappa_2$ such that, for any $t\in (0,1]$,
\[
	\kappa_2 t \les s\leq \kappa_1.
\]
Without loss of generality we can take $\kappa_1\geq 1$.

Hence, if we define $\kappa=\kappa_2/\kappa_1$,
\[
	\begin{split}
	T_{\mu_1^t} g(x)&=\int_{\kappa_2 t}^{\kappa_1} S_{t,s} g(x)\, \frac{\dd s}{s}=\int_{\kappa}^{1/t} S_{t,ts\kappa_1} g(x)\,\frac{\dd s}{s}.
	\end{split}
\]
Observe now that
\[
	\begin{split}
	 S_{t,t s\kappa_1} g(x) &=\int	\chi(x)\mu(\eta)\widehat{\theta_1}(t\nabla_x\phase_2(x,\eta))t^{-m_2}\widehat{\psi}(ts\kappa_1\nabla_x\phase_2(x,\eta))^2\,\, \widehat{g}(\eta) e^{i\phase_2(x,\eta)}\ddd \eta\\
	 &=s^{m_2}\int	\chi(x)\mu(\eta)\widehat{\theta_1}(t\nabla_x\phase_2(x,\eta))\widehat{\Psi}(ts\nabla_x\phase_2(x,\eta)) \abs{\nabla_x\phase_2(x,\eta)}^{m_2}\,\, \widehat{g}(\eta) e^{i\phase_2(x,\eta)}\ddd \eta\\
	 \end{split}
\]
where
\begin{equation}\label{eq:new_psi}
	\widehat{\Psi}(\eta)=\widehat{\psi}(\kappa_1\eta)^2 \abs{\eta}^{-m_2}.
\end{equation}
Now if $P_{1,t}$ denotes the multiplier operator with symbol $\widehat{\theta_1}(t\eta)$, then Theorem \ref{left composition with pseudo} yields
\[
	\begin{split}
	 S_{t,ts\kappa_1} g(x) &=s^{m_2} P_{1,t} \left[\int	\chi(\cdot)\mu(\eta)\widehat{\Psi}(ts\nabla_x\phase_2(\cdot,\eta)) \abs{\nabla_x\phase_2(\cdot,\eta)}^{m_2}\,\, \widehat{g}(\eta) e^{i\phase_2(\cdot,\eta)}\ddd \eta\right](x)\\
	 &+ s^{m_2}\int	R_t^1(x,\eta)\,\widehat{g}(\eta)\, e^{i\phase_2(x,\eta)}\ddd \eta,
	 \end{split}
\]
where $R_t^1(x,\eta)\in S^{m_2 -(\frac{1}{2}-\varepsilon)}_{1,0}$ with seminorms bounded by $t^\varepsilon$ with $\varepsilon\in (0,1/2)$ and compact spatial support.

Observe that, since $ts\leq 1$, $\widehat{\Psi}(ts\eta)\in S^{0}_{1,0}$ unifomly in $ts$, Theorem \ref{left composition with pseudo} yields
\[
	\begin{split}
	&\int	\chi(x)\mu(\eta)\widehat{\Psi}(ts\nabla_x\phase_2(x,\eta)) \abs{\nabla_x\phase_2(x,\eta)}^{m_2}\,\, \widehat{g}(\eta) e^{i\phase_2(x,\eta)}\ddd \eta=\\
	&\qquad =Q_{ts} \left[\int	\chi(\cdot)\mu(\eta)\abs{\nabla_x\phase_2(\cdot,\eta)}^{m_2}\,\, \widehat{g}(\eta) e^{i\phase_2(\cdot,\eta)}\ddd \eta\right]\\
	&\qquad+ \int R_{ts}^2(x,\eta)\,\widehat{g}(\eta)\, e^{i\phase_2(x,\eta)}\ddd \eta,
	\end{split}
\]
where $R_{ts}^2(x,\eta)\in S^{m_2 -(\frac{1}{2}-\varepsilon)}_{1,0}$ with seminorms bounded by $(ts)^\varepsilon$ with $\varepsilon\in (0,1/2)$ and compact spatial support.
Hence
\[
	\begin{split}
	 S_{t,ts\kappa_1} g(x) &=s^{m_2} P_{1,t} Q_{ts} \left(T_{\gamma}^{\phase_2} g\right)(x)+s^{m_2}P_{1,t}\left[\int R_{ts}^2(\cdot,\eta) \, \widehat{g}(\eta)\, e^{i\phase_2(\cdot,\eta)}\ddd \eta\right]\\
	 &+s^{m_2}\int R_t^1(x,\eta)\, \widehat{g}(\eta) e^{i\phase_2(x,\eta)}\ddd \eta\\
	 &=S_{s,t}^1g(x)+S_{s,t}^2g(x)+S_{s,t}^3g(x),
	 \end{split}
\]
where $\gamma(x,\eta)=\chi(x)\mu(\eta)\abs{\nabla_x\phase_2(x,\eta)}^{m_2}\in S^{m_2}_{1,0}$.

Then
\[
\begin{split}
I_1&=\int_0^1 \brkt{Q_t^u T_{\nu_{m_1}}^{\varphi_1}f(x)} \brkt{P_t^v T_{\mu_1^t}^{\phase_2} g(x)} m(t,x) \frac{\dd t}{t}\\
&=\sum_{j=1,2,3}\int_{\kappa}^{\infty} \int_0^1 \chi_{(0,1)}(st)\brkt{Q_t^u T_{\nu_{m_1}}^{\varphi_1}f(x)} \brkt{P_t^v S^{j}_{s,t} g(x)} m(t,x) \frac{\dd t}{t}\frac{\dd s}{s}
\end{split}
\]
Observe that
\[
	\norm{P_t S^{3}_{s,t}g}_{L^q}\lesssim s^{m_2} t^{\varepsilon} \norm{g}_{L^q},
\]
\[
	\norm{P_t S^{2}_{s,t}g}_{L^q}\lesssim s^{m_2+\varepsilon} t^{\varepsilon} \norm{g}_{L^q},
\]
and
\[
	\norm{Q_t T_{\nu_{m_1}} f}_{L^p}\lesssim \norm{f}_{L^p}.
\]
for $1<p\leq \infty$ and $\norm{Q_t T_{\nu_{m_1}} f}_{L^1}\lesssim \norm{f}_{H^1}$.
Then, for $1<p\leq \infty$
\[
\begin{split}
	&\norm{\int_{\kappa}^{\infty} \int_0^1 \chi_{(0,1)}(st)\brkt{Q_t^u T_{\nu_{m_1}}^{\varphi_1}f(x)} \brkt{P_t^v S^{3}_{s,t} g(x)} m(t,x) \frac{\dd t}{t}\frac{\dd s}{s}}_{L^r}\\
	&\quad \lesssim \int_{\kappa}^\infty \int_0^{1/s} s^{m_2} t^{\varepsilon}\frac{\dd t}{t}\frac{\dd s}{s} \norm{f}_{L^p}\norm{g}_{L^q}\lesssim \norm{f}_{L^p}\norm{g}_{L^q}.
\end{split}
\]
For $p=1$, one has similar bound with $H^1$ instead of $L^1$.
Moreover if $n\geq 2$,
\[
\begin{split}
	&\norm{\int_{\kappa}^{\infty} \int_0^1 \chi_{(0,1)}(st)\brkt{Q_t^u T_{\nu_{m_1}}^{\varphi_1}f(x)} \brkt{P_t^v S^{2}_{s,t} g(x)} m(t,x) \frac{\dd t}{t}\frac{\dd s}{s}}_{L^r}\\
	&\quad \lesssim \int_{\kappa}^\infty \int_0^{1/s} s^{m_2+\varepsilon} t^{\varepsilon}\frac{\dd t}{t}\frac{\dd s}{s} \norm{f}_{L^p}\norm{g}_{L^q}\lesssim \norm{f}_{L^p}\norm{g}_{L^q}.
\end{split}
\]
Hence it only remains to prove the desired bound for
\begin{equation}\label{eq:final_reduction}
	\int_\kappa^\infty  s^{m_2} \int_0^1\chi_{(0,1)}(st)\brkt{Q_t^u T_{\nu_{m_1}}^{\varphi_1}f(x)} \brkt{P_t^v P_{1,t} Q_{ts} T_{\gamma}^{\phase_2}g(x)}m(t,x)\frac{\dd t}{t}\frac{\dd s}{s}.
\end{equation}
Let us define
\begin{equation}\label{eq:Miracle}
	R_t (G)(x):=\int_\kappa^{1/t} s^{m_2}Q_{ts}(G)(x)\frac{\dd s}{s}=\int K_t(x-y) G(y)\dd y,
\end{equation}
where
\begin{equation}\label{eq:Miracolous_kernel}
	K_t(z)=\int_\kappa^{1/t} s^{m_2} \Psi\brkt{\frac{z}{ts}} (ts)^{-n}\frac{\dd s}{s}.
\end{equation}
\begin{lem} \label{lem:miracle} The function $K_t$ defined above is such that
\begin{equation}\label{eq:cancellation}
	\int K_t(z) \dd z=0
\end{equation}
and for each $0<\delta<-m_2$ we have the estimates
\begin{equation}\label{eq:decay}
	\abs{K_t(x-y)}\lesssim {t^{-n}}{\brkt{1+\frac{\abs{x-y}}{t}}^{-n-\delta}}
\end{equation}
and
\begin{equation}\label{eq:mean_value}
	\abs{K_t(x-y)-K_t(x-y')}\lesssim {t^{-n-1}}\abs{y-y'}
\end{equation}
for all $x,y,y'\in \R^n$ and $0<t<1$
\end{lem}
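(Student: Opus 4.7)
My approach would first record the basic properties of $\Psi$ itself. Since $\widehat{\psi}$ is smooth with compact support in the annulus $\{1/2\leq |\eta|\leq 2\}$, the factor $\widehat{\psi}(\kappa_1\eta)^2$ appearing in the definition \eqref{eq:new_psi} is supported in an annulus bounded away from the origin, on which $|\eta|^{-m_2}$ is smooth. Hence $\widehat{\Psi}\in C^\infty_{0}(\R^n\setminus\{0\})$, which makes $\Psi$ a Schwartz function with $\int\Psi(w)\,dw = \widehat{\Psi}(0) = 0$. The cancellation \eqref{eq:cancellation} is then immediate: by Fubini (the integrand is absolutely integrable on the compact $s$-interval) and the scaling $w = z/(ts)$,
\[
\int K_t(z)\,\dd z = \left(\int_\kappa^{1/t} s^{m_2-1}\,\dd s\right)\int \Psi(w)\,\dd w = 0.
\]

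For the decay \eqref{eq:decay}, I would change variables $r=ts$ in \eqref{eq:Miracolous_kernel} to rewrite
\[
K_t(z) = t^{-m_2}\int_{t\kappa}^{1} r^{m_2-n-1}\Psi(z/r)\,\dd r.
\]
The Schwartz decay of $\Psi$ gives, for the specific choice of exponent $n+\delta$,
\[
\abs{\Psi(z/r)} \lesssim \left(\frac{r}{r+\abs{z}}\right)^{n+\delta}.
\]
Using $r\geq t\kappa$ one has $r+\abs{z}\gtrsim t+\abs{z}$ uniformly, so the whole estimate collapses to
\[
\abs{K_t(z)}\lesssim \frac{t^{-m_2}}{(t+\abs{z})^{n+\delta}}\int_{t\kappa}^{1} r^{m_2+\delta-1}\,\dd r.
\]
The condition $0<\delta<-m_2$ guarantees the exponent $m_2+\delta-1<-1$, hence the remaining integral is controlled by $(t\kappa)^{m_2+\delta}\thickapprox t^{m_2+\delta}$ and the resulting bound simplifies to $t^{\delta}(t+\abs{z})^{-n-\delta}=t^{-n}(1+\abs{z}/t)^{-n-\delta}$, which is \eqref{eq:decay}.

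For the mean-value estimate \eqref{eq:mean_value}, I would apply the mean value theorem to $\Psi$, giving $\abs{\Psi(z/r)-\Psi(z'/r)}\lesssim \norm{\nabla\Psi}_{L^\infty}\abs{z-z'}/r$ (finite because $\Psi\in \mathcal{S}$). Substituting into $K_t(z)-K_t(z')$ and integrating yields
\[
\abs{K_t(z)-K_t(z')}\lesssim t^{-m_2}\abs{z-z'}\int_{t\kappa}^{1} r^{m_2-n-2}\,\dd r \lesssim t^{-m_2}\abs{z-z'}\, t^{m_2-n-1} = t^{-n-1}\abs{z-z'},
\]
since $m_2-n-2<-1$; the choice $z=x-y$, $z'=x-y'$ then completes \eqref{eq:mean_value}. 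The only real subtlety across the three estimates is in the decay step, where the requirement $\delta<-m_2$ is forced precisely to make the lower endpoint contribution $t^{m_2+\delta}$ integrable and to convert the factor $t^{-m_2}$ into the correct spatial decay rate.
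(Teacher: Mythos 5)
Your proof is correct and takes essentially the same approach as the paper: cancellation from $\int\Psi=0$, decay from the Schwartz decay of $\Psi$ together with the integrability of $s^{m_2+\delta-1}$ over $[\kappa,\infty)$, and the Lipschitz bound from the mean value theorem. The only (minor, purely cosmetic) difference is that you obtain the decay in one step from the uniform bound $\abs{\Psi(w)}\lesssim(1+\abs{w})^{-n-\delta}$ together with $r+\abs{z}\gtrsim t+\abs{z}$, whereas the paper derives the two bounds $\abs{K_t(z)}\lesssim t^{-n}$ and $\abs{K_t(z)}\lesssim(\abs{z}/t)^{-n-\delta}t^{-n}$ separately and combines them.
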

\begin{proof} The first assertion follow from the fact that $\int \Psi \dd z=0$. For the second one, since $\Psi\in \S$ and $0<\delta<-m_2$ we have that
\[
	\abs{K_t(z)}\leq \brkt{\frac{\abs{z}}{t}}^{-n-\delta} t^{-n} \int_\kappa^{1/t} s^{m_2+\delta}\frac{\dd s }{s}\lesssim  \brkt{\frac{\abs{z}}{t}}^{-n-\delta} t^{-n}.
\]
Moreover
\[
	\abs{K_t(z)}\leq t^{-n}\int_\kappa^{1/t} s^{m_2-n-1}\dd s\lesssim  t^{-n}.
\]
These estimates yield \eqref{eq:decay}. Estimate \eqref{eq:mean_value} follows from the mean value theorem and  the fact that
\[
	\abs{\nabla_z K_t(z)}=\abs{ \int_\kappa^{1/t} t^{-n-1}s^{-n-1+m_2} \brkt{\nabla\Psi} \brkt{\frac{x}{ts}}\frac{\dd s}{s}}\lesssim t^{-n-1}.
\]

\end{proof}
\begin{prop} The operator $R_t$ defined above satisfies
\begin{equation}\label{eq:Lq_bound}
	\sup_{t>0}\norm{R_t f}_{L^q}\lesssim \norm{f}_{L^q},\qquad 1\leq q\leq \infty,
\end{equation}
and
\begin{equation}\label{eq:BMO_bound}	\sup_{t>0}\norm{R_t f}_{L^\infty}\lesssim \norm{f}_{\mathrm{BMO}}.
\end{equation}
\end{prop}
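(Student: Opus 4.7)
The plan is to prove the two estimates separately, relying entirely on the three properties of the kernel $K_t$ established in Lemma \ref{lem:miracle}.

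For the $L^q$ bound, $1\le q\le \infty$, I would first show that $K_t\in L^1(\R^n)$ uniformly in $t>0$. Indeed, the decay estimate \eqref{eq:decay} with any choice $0<\delta<-m_2$ gives
\[
	\int_{\R^n} \abs{K_t(z)}\dd z \lesssim \int_{\R^n} t^{-n}\brkt{1+\tfrac{\abs{z}}{t}}^{-n-\delta}\dd z = \int_{\R^n}(1+\abs{w})^{-n-\delta}\dd w <\infty,
\]
after the change of variables $w=z/t$, and this bound is independent of $t$. Since $R_t f = K_t \ast f$, Young's inequality yields $\norm{R_t f}_{L^q}\le \norm{K_t}_{L^1}\norm{f}_{L^q}\lesssim \norm{f}_{L^q}$ for all $1\le q\le \infty$.

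For the $\mathrm{BMO}\to L^\infty$ bound, the cancellation condition \eqref{eq:cancellation} is essential. Fix $x\in \R^n$ and write, using $\int K_t=0$,
\[
	R_t f(x) = \int K_t(x-y)\brkt{f(y)-f_{B(x,t)}}\dd y,
\]
where $f_{B(x,t)}$ denotes the average of $f$ over the ball $B(x,t)$. I would then split the integral into $\abs{x-y}\le t$ and the dyadic annuli $A_k=\set{y: 2^k t<\abs{x-y}\le 2^{k+1}t}$ for $k\ge 0$. On the near-diagonal part the size estimate \eqref{eq:decay} gives $\abs{K_t(x-y)}\lesssim t^{-n}$, so
\[
	\abs{\int_{B(x,t)}K_t(x-y)(f(y)-f_{B(x,t)})\dd y}\lesssim \frac{1}{\abs{B(x,t)}}\int_{B(x,t)}\abs{f(y)-f_{B(x,t)}}\dd y \lesssim \norm{f}_{\mathrm{BMO}}.
\]
On each annulus $A_k$, \eqref{eq:decay} yields $\abs{K_t(x-y)}\lesssim t^{-n}2^{-k(n+\delta)}$, and the standard $\mathrm{BMO}$ estimate $\abs{f_{B(x,2^{k+1}t)}-f_{B(x,t)}}\lesssim (k+1)\norm{f}_{\mathrm{BMO}}$ gives
\[
	\int_{A_k}\abs{f(y)-f_{B(x,t)}}\dd y \lesssim (k+1)(2^{k+1}t)^n\norm{f}_{\mathrm{BMO}}.
\]
Combining these bounds,
\[
	\abs{\int_{A_k} K_t(x-y)(f(y)-f_{B(x,t)})\dd y}\lesssim (k+1)2^{-k\delta}\norm{f}_{\mathrm{BMO}},
\]
and since $\delta>0$ the sum $\sum_{k\ge 0}(k+1)2^{-k\delta}$ converges. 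Taking the supremum in $x$ and $t$ completes the argument. The only subtle point is to keep $0<\delta<-m_2$ fixed throughout so that the integrability in the Young inequality step and the geometric decay on annuli both hold; this is guaranteed by the hypothesis $m_2<0$ assumed in Case II. The smoothness estimate \eqref{eq:mean_value} is not needed here, but will likely play a role later when one wishes to iterate this ingredient into a boundedness statement for the bilinear operator itself.
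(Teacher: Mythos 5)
Your proof is correct and follows essentially the same route as the paper: for \eqref{eq:Lq_bound}, the uniform $L^1$ bound on $K_t$ (via \eqref{eq:decay} and a change of variables) combined with Young's inequality matches the paper's appeal to Minkowski's integral inequality and \eqref{eq:decay}; for \eqref{eq:BMO_bound}, both arguments start from the cancellation \eqref{eq:cancellation} to replace $f$ by $f-f_{B(x,t)}$ and then exploit the decay — the paper rescales to $B(0,1)$ and cites \cite{G}*{Proposition 7.1.5}, while you unpack that citation by carrying out the dyadic-annuli estimate explicitly, which is exactly the proof of that proposition.
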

\begin{proof} The first assertion is a consequence of the Minkowski integral inequality and \eqref{eq:decay}.
For the second assertion,  using \eqref{eq:cancellation} and \eqref{eq:decay}
\[
	\abs{R_t f(x)}\lesssim \int {t^{-n}}{\brkt{1+\frac{\abs{x-y}}{t}}^{-n-\delta}} \abs{f(y)-{\rm Avg}_{B(x,t)}f}\dd y.
\]
By a change of variables, if $F_{t,x}(y)=f(ty+x)$ then,
\[
	\abs{R_t f(x)}\lesssim \int {\brkt{1+{\abs{y}}}^{-n-\delta}} \abs{F_{t,x}(y)-{\rm Avg}_{B(0,1)}F_{t,x}}\dd y\lesssim \norm{f}_{\mathrm{BMO}},
\]
where the last inequality follows from \cite{G}*{Proposition 7.1.5} and the dilation and translation invariance of the BMO norm.
\end{proof}
Now, it suffices to see that
\begin{equation}\label{eq:X}
	 \abs{\int \int_0^{1} h(x) \brkt{Q_t^u T_{\nu_{m_1}}^{\varphi_1}f(x)} \brkt{\tilde{P}_{t}^v R_t T_{\gamma}^{\phase_2}g(x)}m(t,x)\frac{\dd t}{t}\dd x}\lesssim \norm{f}_{L^p}\norm{g}_{L^q} \norm{h}_{L^{r'}},
\end{equation}
where $h\in L^{r'}$ has support contained in the $x$ support of $m$,  $1/{r'}=1-1/p-1/q$ and $\tilde{P}_{t}^v=P_t^v P_{1,t}$.
Now we decompose $\tilde{P}_{t}^v=P_t^{2,v}+Q_t^{2,v}$ where $P_t^{2,v}$  has symbol a smooth function of the type $\widehat{\theta_2}(t\xi)e^{i t \xi\cdot v}$ with $\supp \widehat{\theta_2}$ contained in a sufficiently small ball, and the symbol of $Q_t^{2,v}$  is $(\widehat{\theta}(t\xi)-\widehat{\theta_2}(t\xi))e^{i t \xi\cdot v}$. We take the support of $\widehat{\theta_2}$ small enough to assure that, for a suitable smooth radial function $\widehat{\psi_2}(\xi)$ supported in an annulus, we can write
\[
	\brkt{Q_t^u T_{\nu_{m_1}}^{\varphi_1}f(x)} \brkt{P_t^{2,v} R_t T_\gamma^{\phase_2}g (x)}=\tilde{Q}_{t}\brkt{\brkt{Q_t^u T_{\nu_{m_1}}^{\varphi_1}f} \brkt{P_t^{2,v} R_t T_\gamma^{\phase_2}g}}(x),
\]
where $\tilde{Q}_{t}$ is the multiplier operator with symbol $\widehat{\psi_2}(t\xi)$. Furthermore, we can find a smooth radial function $\widehat{\theta_3}$ supported in a ball such that
\[
	\brkt{Q_t^u T_{\nu_{m_1}}^{\varphi_1}f(x)} \brkt{Q_t^{2,v}R_t  T_\gamma^{\phase_2}g (x)}=\tilde{P}_{t}\brkt{\brkt{Q_t^u T_{\nu_{m_1}}^{\varphi_1}f} \brkt{Q_t^{2,v} R_t T_\gamma^{\phase_2}g}}(x),
\]
where $\tilde{P}_{t}$ has symbol $\widehat{\theta_3}(t\xi)$.
Then, the left hand side of \eqref{eq:X} is equal to
\[
	\begin{split}
		I+II:=&\int \int_0^{1} \brkt{Q_t^u T_{\nu_{m_1}}^{\varphi_1}f(x)} \brkt{P_t^{2,v} R_t T_\gamma^{\phase_2}g (x)} \tilde{Q}_{t}((h m(t,\cdot))(x) \frac{\dd t}{t}\dd x\\
	&\quad+\int \int_0^{1} \brkt{Q_t^u T_{\nu_{m_1}}^{\varphi_1}f(x)} \brkt{Q_t^{2,v} R_t T_\gamma^{\phase_2}g (x)}\tilde{P}_{t}((h m(t,\cdot))(x)\frac{\dd t}{t}\dd x
	\end{split}
\]
Define $M_{m}$ by $M_m(f)(x) = m(t,x)f(x)$. Observe that
\[
	\begin{split}
	I =I_{4}+I_5 &=\int \int_0^{1} \brkt{Q_t^u T_{\nu_{m_1}}^{\varphi_1}f(x)} \brkt{P_t^{2,v} R_t T_\gamma^{\phase_2}g (x)} \tilde{Q}_{t}h(x) m(t,x) \frac{\dd t}{t}\dd x\\
	&+\int\int_0^{1} \brkt{Q_t^u T_{\nu_{m_1}}^{\varphi_1}f(x)} \brkt{P_t^{2,v} R_t T_\gamma^{\phase_2}g (x)} \left[\tilde{Q}_{t}, M_m\right]h(x)\frac{\dd t}{t}\dd x
	\end{split}
\]

Then  we could apply  \ref{eq:SSS} in Theorem \ref{linear_FIO}, \eqref{eq:Lq_bound} and the Minkowski integral inequality to obtain
\begin{equation}\label{eq:lack_of_cake_6}
	\norm{P_t^{2,v} R_t T_\gamma^{\phase_2}g}_{L^q}\leq C(v) \norm{g}_{L^q},
\end{equation}
whenever $1<q<\infty$. Similarly
\begin{equation}\label{eq:lack_of_cake_7}
	\norm{P_t^{2,v}R_t  T_\gamma^{\phase_2}g }_{L^1}\leq C(v) \norm{g}_{H^1},
\end{equation}
For $q=\infty$, \eqref{eq:BMO_bound} and  \ref{eq:SSS_infty} in Theorem  \ref{linear_FIO} yield that for any $t>0$,
\begin{equation}\label{eq:lack_of_cake_8}
	\norm{P_t^{2,v}R_t T_\gamma^{\phase_2}g}_{L^\infty}\lesssim \norm{R_t T_\gamma^{\phase_2}g}_{L^\infty}\lesssim\norm{T_\gamma^{\phase_2}g}_{\mathrm{BMO}}
	\lesssim \norm{g}_{L^\infty}.
\end{equation}

For the analysis of $I_5$ we observe that, by the mean-value theorem, we have that
\[
|m(t,y,u,v) - m(t,x,u,v)| \lesssim |y-x|
\]
with an implicit constant that by \eqref{eq:boundedness_of_m}, is independent of $t$, $u$ and $v$. Using this, we compute
\begin{align*}
 |[\tilde{Q}_{t},M_m](f)(x)| & \lesssim \left|\int t^{-n}{\psi}\brkt{\frac{x-y}{t}}\big(m(t,y,u,v) - m(t,x,u,v)\big) f(y) \dd y\right| \\
& \lesssim t\int \abs{t^{-n}\psi\brkt{\frac{x-y}{t}}}\frac{|x-y|}{t} |f(y)| \dd y.
\end{align*}
Therefore
\begin{equation}\label{eq:lack_of_cake_9}
	\norm{\left[\tilde{Q}_{t}, M_m\right]h}_{L^{r'}}\lesssim t \norm{h}_{L^{r'}}.	
\end{equation}
Similar estimate holds for the commutator $\left[\tilde{P}_{t}, M_m\right]$.

Then,  \eqref{eq:lack_of_cake_3_1}, \eqref{eq:lack_of_cake_3},  \eqref{eq:lack_of_cake_6} and \eqref{eq:lack_of_cake_9},  imply   that $\abs{I_5} \leq C(u,v,\varepsilon ) \norm{f}_{L^{p}} \norm{g}_{L^{q}} \norm{h}_{L^{r'}}$ for $1<q<\infty$. For $q=1$, \eqref{eq:lack_of_cake_3} \eqref{eq:lack_of_cake_7} and \eqref{eq:lack_of_cake_9} yield $\abs{I_5} \leq C(u,v,\varepsilon ) \norm{f}_{L^{\infty}} \norm{g}_{H^1} \norm{h}_{L^{\infty}}$. For $q=\infty$,  \eqref{eq:lack_of_cake_3_1}, \eqref{eq:lack_of_cake_3}, \eqref{eq:lack_of_cake_8} and \eqref{eq:lack_of_cake_9} yield
\[
	\abs{I_5}\lesssim \norm{f}_{L^p}\norm{g}_{L^\infty} \norm{h}_{L^{r'}},
\]
for $p>1$, and
$\abs{I_5}\lesssim \norm{f}_{H^1}\norm{g}_{L^\infty} \norm{h}_{L^{\infty}}$, for $p=1$.

Arguing in a similar way,
\[
	\begin{split}
	II =I_{6}+I_7 &=\int \int_0^{1} \brkt{Q_t^u T_{\nu_{m_1}}^{\varphi_1}f(x)} \brkt{Q_t^{2,v} R_t  T_\gamma^{\phase_2}g (x)} \tilde{P}_{t}h(x) m(t,x) \frac{\dd t}{t}\dd x\\
	&+\int\int_0^{1} \brkt{Q_t^u T_{\nu_{m_1}}^{\varphi_1}f(x)} \brkt{Q_t^{2,v} R_t T_\gamma^{\phase_2}g (x)} \left[\tilde{P}_{t}, M_m\right]h(x) \frac{\dd t}{t}\dd x
	\end{split}
\]
with
$\abs{I_7}\leq C(u,v,\varepsilon ) \norm{f}_{L^{p}} \norm{g}_{L^{q}} \norm{h}_{L^{r'}}$ and the obvious modifications for the extremal cases.
So we have reduced the problem to prove the desired estimates for
\begin{equation}\label{eq:I_4}
\abs{I_4}=\abs{\int \int_0^{1} \brkt{Q_t^u T_{\nu_{m_1}}^{\varphi_1}f(x)} \brkt{P_t^{2,v} R_t T_\gamma^{\phase_2}g (x)} \tilde{Q}_{t}h(x) m(t,x) \frac{\dd t}{t}\dd x},
\end{equation}
\begin{equation}\label{eq:I_6}
	\abs{I_6}=\abs{\int\int_0^{1} \brkt{Q_t^u T_{\nu_{m_1}}^{\varphi_1}f(x)} \brkt{Q_t^{2,v} R_t T_\gamma^{\phase_2}g (x)} \tilde{P}_{t}h(x) m(t,x) \frac{\dd t}{t}\dd x}.
\end{equation}

At this point we proceed by studying four subcases.

\subsubsection*{{\bf Case II.a:} $m=-(n-1)/2$} Using Remark \ref{rem:Remark}, it is enough to consider the $L^2\times L^\infty\to L^2$ case. Therefore let us assume that $f,h\in L^2$, $g\in L^\infty$. Observe that we can assume, without loss of generality, that $\norm{g}_{L^\infty}=1$. Then, by the Cauchy-Schwarz inequality and \eqref{eq:boundedness_of_m} we have
\[
	\abs{I_4}\lesssim \brkt{\int \int_0^1  \abs{Q_t^u T_{\nu_{m_1}}^{\varphi_1}f(x)}^2 \frac{\dd t}{t} \dd x }^{\frac{1}{2}}
	 \brkt{\int \int_0^1  \abs{P_t^{2,v} R_t T_\gamma^{\phase_2}g (x)}^2 \abs{\tilde{Q}_{t} h(x)}^2 \frac{\dd t}{t} \dd x }^{\frac{1}{2}}.
\]
A standard quadratic estimate together with the $L^2$ boundedness of $T_{\nu_{m_1}}^{\varphi_1}$ given by  \ref{eq:SSS} in Theorem \ref{linear_FIO} yield that the first factor on the right hand side is bounded by $C(u)\norm{f}_{L^2}$. For the second factor, a quadratic estimate and \eqref{eq:lack_of_cake_8} yield that it is bounded by $\norm{g}_{L^\infty}\norm{h}_{L^2}$. Therefore $\abs{I_4}\lesssim C(u)  \norm{f}_{L^2}\norm{g}_{L^\infty}\norm{h}_{L^2}$.

In order to deal with $I_6$, let us recall that a Borel measure $\dd\mu$ on $\R^n_+$ is called a Carleson measure if
\[
\norm{\mathrm{d}\mu}_{\mathcal{C}}=\sup_{\epsilon>0}\sup_{y\in \R^n} \frac{1}{\epsilon^n}\int_0^{\epsilon}\int_{\abs{x-y}<\epsilon} \mathrm{d}\abs{\mu}(x,t)<+\infty.
\]
The quantity $\norm{\mathrm{d}\mu}_{\mathcal{C}}$ is called the Carleson norm of $\dd\mu$.
We shall also need the following lemma, whose proof is a modification of an argument which goes back to A. Uchiyama.
\begin{lem}\label{lem:composition} For any Carleson measure $\mathrm{d}\mu$ and $K_t$ satisfying \eqref{eq:decay} for some $\delta>0$,
\[
	\dd \tilde{\mu}(x,t):=\brkt{\int \abs{K_t(x-y)}\dd \mu(y,t)}\dd x,
\]
defines a Carleson measure and
$
	\norm{\dd \tilde{\mu}}_{\mathcal{C}}\lesssim \norm{\mathrm{d}\mu}_{\mathcal{C}}.
$
\end{lem}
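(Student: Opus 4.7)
The plan is to verify the Carleson condition for $\dd\tilde\mu$ by a standard dyadic decomposition around an arbitrary fixed centre. Given $y_0\in\R^n$ and $\epsilon>0$, I would apply Fubini to write
\[
\frac{1}{\epsilon^n}\int_0^\epsilon\!\!\int_{\abs{x-y_0}<\epsilon}\dd\tilde\mu(x,t)
=\frac{1}{\epsilon^n}\int_0^\epsilon\!\!\int\brkt{\int_{\abs{x-y_0}<\epsilon}\abs{K_t(x-y)}\dd x}\dd\abs{\mu}(y,t),
\]
and then decompose the $y$-integration as $B_0=\set{\abs{y-y_0}<2\epsilon}$ together with the annuli $A_k=\set{2^k\epsilon\leq\abs{y-y_0}<2^{k+1}\epsilon}$ for $k\geq 1$.

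For the near piece $B_0$, I would use the trivial bound $\int_{\R^n}\abs{K_t(x-y)}\dd x\lesssim 1$ which follows immediately from \eqref{eq:decay} upon scaling $u=(x-y)/t$. The resulting contribution is controlled by
$\epsilon^{-n}\int_0^{2\epsilon}\!\!\int_{\abs{y-y_0}<2\epsilon}\dd\abs\mu\leq 2^n\norm{\dd\mu}_{\mathcal C}$ by direct application of the Carleson hypothesis on the ball of radius $2\epsilon$.

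For each annular piece $A_k$ with $k\geq 1$, the triangle inequality gives $\abs{x-y}\geq 2^{k-1}\epsilon$ whenever $\abs{x-y_0}<\epsilon$, so \eqref{eq:decay} yields
\[
\abs{K_t(x-y)}\lesssim t^{-n}\brkt{1+\frac{2^{k-1}\epsilon}{t}}^{-n-\delta}\lesssim 2^{-k(n+\delta)}\frac{t^{\delta}}{\epsilon^{n+\delta}}
\]
for $0<t<\epsilon$, after bounding $1+2^{k-1}\epsilon/t\geq 2^{k-1}\epsilon/t$. Integrating in $x$ over $\abs{x-y_0}<\epsilon$ produces the pointwise-in-$(y,t)$ estimate $\int_{\abs{x-y_0}<\epsilon}\abs{K_t(x-y)}\dd x\lesssim 2^{-k(n+\delta)}(t/\epsilon)^{\delta}\leq 2^{-k(n+\delta)}$. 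Applying the Carleson condition on the enlarged ball of radius $2^{k+1}\epsilon$ then bounds the $A_k$-contribution by
\[
\frac{2^{-k(n+\delta)}}{\epsilon^n}\brkt{2^{k+1}\epsilon}^n\norm{\dd\mu}_{\mathcal C}=2^{n-k\delta}\norm{\dd\mu}_{\mathcal C}.
\]

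Finally, summing the geometric series $\sum_{k\geq 1}2^{n-k\delta}<\infty$ (which converges precisely because $\delta>0$) combines with the $B_0$ estimate to yield $\norm{\dd\tilde\mu}_{\mathcal C}\lesssim\norm{\dd\mu}_{\mathcal C}$. The entire argument is elementary once the dyadic decomposition is set up; the only mildly delicate point is matching the $t$-range to the spatial scale so that the Carleson condition can be invoked on balls of radius $2^{k+1}\epsilon$ rather than on pieces of truncated cylinders — this is why the trivial estimate $t/\epsilon\leq 1$ for $t\in(0,\epsilon)$ is used to clean up the final exponent and guarantee summability.
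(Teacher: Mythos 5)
Your proof is correct and follows the same underlying Uchiyama-type strategy as the paper: Fubini, split the $y$-integration by distance from the centre, exploit the decay of $K_t$ on the far pieces, and invoke the Carleson hypothesis on suitably enlarged regions. The only difference is in the choice of decomposition: the paper tiles $\R^n$ by balls $B(y_0 + \epsilon k,\,\epsilon\sqrt{n}/2)$ indexed by the lattice $k\in\Z^n$, separating $\abs{k}<2+\sqrt{n}$ from $\abs{k}\geq 2+\sqrt{n}$ and summing $\sum\abs{k}^{-n-\delta}$; you use dyadic annuli $A_k=\{2^k\epsilon\leq\abs{y-y_0}<2^{k+1}\epsilon\}$ and sum the geometric series $\sum 2^{-k\delta}$. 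Both are standard and equally rigorous; the dyadic version is arguably a bit cleaner to write down because the summability is a one-line geometric series rather than a lattice sum, while the paper's tiling has the minor advantage of covering $\R^n$ by congruent pieces so that each local Carleson estimate is applied on a fixed scale $\epsilon$ rather than on the growing radii $2^{k+1}\epsilon$. Your observation about matching the $t$-range to the spatial scale (i.e.\ using $t/\epsilon\leq 1$ to kill the $(t/\epsilon)^\delta$ factor) is exactly the same device the paper uses to pass from the decay estimate to a bound uniform in $t\leq\epsilon$.
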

\begin{proof} Let $\epsilon>0$ and $y\in \R^n$. First, we cover $\R^n$ with balls of radius $\epsilon\sqrt{n}/{2}$ and centre $y+\epsilon k$, with $k\in \Z^n$. Then changing the order of integration followed by a change of variables yield
\begin{equation}\label{eq:Uchiyama}
	\begin{split}
	I(\epsilon,y)&:=\int_0^\epsilon \int_{\abs{x-y}<\epsilon} \brkt{\int \abs{K_t(x-z)}\dd \mu(z,t)} \dd x\\
	&\leq \sum_{k\in \Z^n} \int_0^\epsilon \int_{\abs{x-y}<\epsilon} \brkt{\int_{\abs{z-y-k\epsilon}<\epsilon\frac{\sqrt{n}}{2}} \abs{K_t(x-z)}\dd \mu(z,t)} \dd x\\
	&= \sum_{k\in \Z^n} \int_0^\epsilon \int_{\abs{z-y-k\epsilon}<\epsilon\frac{\sqrt{n}}{2}}  \brkt{\int_{\abs{z-y+u}<\epsilon}  \abs{K_t(u)}\dd u}\dd \mu(z,t).
	\end{split}
\end{equation}
Using the triangle inequality we have that $\abs{u+k\epsilon}\leq \epsilon \brkt{1+\frac{\sqrt{n}}{2}}$, and therefore we have
$
	\abs{u}\geq \epsilon \brkt{\abs{k}-\brkt{1+\frac{\sqrt{n}}{2}}}.
$

Now if $\abs{k}\geq 2+\sqrt{n}$ then $\abs{u}\geq \epsilon \abs{k}/2$ and for such $k$'s, \eqref{eq:decay} yields that
\[
	 \brkt{\int_{\abs{z-y+u}<\epsilon}  \abs{K_t(u)}\dd u}\lesssim \abs{k}^{-n-\delta} \brkt{\frac{t}{\epsilon}}^{\delta}\leq \abs{k}^{-n-\delta},
\]
for $t\leq \epsilon$. Then, enlarging the domain of integration as necessary (depending on the dimension) we have
\[
	\begin{split}
	 \sum_{\abs{k}\geq 2+\sqrt{n}} \int_0^\epsilon \int_{\abs{z-y-k\epsilon}<\epsilon\frac{\sqrt{n}}{2}} & \brkt{\int_{\abs{z-y+u}<\epsilon}  \abs{K_t(u)}\dd u}\dd \mu(z,t)\\
	 &\lesssim   \epsilon^{n}\sum_{\abs{k}\geq 2+\sqrt{n}} \abs{k}^{-n-\delta} \norm{\dd\mu}_{\mathcal{C}}\lesssim   \epsilon^{n}\norm{\dd\mu}_{\mathcal{C}}.
	 \end{split}
\]
On the other hand, using once again \eqref{eq:decay}
\[
	 \brkt{\int_{\abs{z-y+u}<\epsilon}  \abs{K_t(u)}\dd u}\lesssim \int \brkt{1+\frac{\abs{u}}{t}}^{-n-\delta} t^{-n}\dd u\lesssim 1.
\]
Hence, again enlarging the domain of integration as necessary,
\[
	\begin{split}
	 \sum_{\abs{k}< 2+\sqrt{n}} \int_0^\epsilon \int_{\abs{z-y-k\epsilon}<\epsilon\frac{\sqrt{n}}{2}}  \brkt{\int_{\abs{z-y+u}<\epsilon}  \abs{K_t(u)}\dd u}\dd \mu(z,t)\lesssim  \epsilon^{n}\norm{\dd\mu}_{\mathcal{C}}.
	 \end{split}
\]
Putting the estimates together we obtain the desired result.
\end{proof}

Observing that the commutator $[R_t, Q_{t}^{2,v}]=0$ and returning to $I_6$, we claim that
\begin{equation}\label{eq:carleson}
	\abs{R_t Q_{t}^{2,v}T_{\gamma}^{\phase_2} g(x)}^2\frac{\dd t}{t}\dd x,
\end{equation}
is a Carleson measure with norm bounded by a constant times $\norm{g}_{L^\infty}^2$= 1. Indeed, it is well-known (see e.g. \cite{S}) that, for any $f\in \mathrm{BMO}$
\begin{equation}\label{eq:standard_CM}
	\mathrm{d}\mu(x,t)=\abs{Q_t^{2,v} f(x)}^2\frac{\dd t}{t}\dd x,
\end{equation}
is a Carleson measure and
\[
	\norm{\mathrm{d}\mu}_{\mathcal{C}}\lesssim \norm{f}_{\mathrm{BMO}}^2.
\]
The Cauchy-Schwarz inequality and \eqref{eq:decay} yield
\[
	\begin{split}
	\abs{R_t Q_{t}^{2,v}T_{\gamma}^{\phase_2} g(x)}^2 &\leq \brkt{\int\abs{K_t(y)}\dd y}\brkt{\int \abs{K_t(x-y)} \abs{Q_t^{2,v} T_{\gamma}^{\phase_2} g(y)}^2\dd y}\\
	&\lesssim  \int \abs{K_t(x-y)} \abs{Q_t^{2,v} T_{\gamma}^{\phase_2} g(x)(y)}^2\dd y.
	\end{split}
\]
Therefore,  \ref{eq:SSS_infty} in Theorem \ref{linear_FIO} and Lemma \ref{lem:composition} prove that \eqref{eq:carleson} is a Carleson measure.

Proceeding as in the analysis of $I_4$  we have
\[
	\abs{I_6}\lesssim \brkt{\int \int_0^1  \abs{Q_t^u T_{\nu_{m_1}}^{\varphi_1}f(x)}^2 \frac{\dd t}{t} \dd x }^{\frac{1}{2}}
	 \brkt{\int \int_0^1  \abs{\tilde{P}_{t} h(x)}^2 \abs{Q_t^{2,v} R_t T_\gamma^{\phase_2}g (x)}^2 \frac{\dd t}{t} \dd x }^{\frac{1}{2}}.
\]
The first factor is treated as before using a standard quadratic estimate and for the second factor we use \eqref{eq:carleson}, \cite{FS}*{Theorem 4} and the boundedness of the non-tangential maximal operator to obtain that
\[
	\abs{I_6}\lesssim \norm{f}_{L^2} \norm{g}_{L^\infty} \norm{h}_{L^2}.
\]

\subsubsection*{{\bf Case II.b:}  $m=-(n-1)$ and $L^\infty\times L^\infty \to BMO$ boundedness}

In this case we need to estimate $I_4$ and $I_6$ with $m_1=m_2=-(n-1)/2$, for $f,g\in L^\infty$ and $h\in H^1$. Without loss of generality, we can assume that $\norm{f}_{L^\infty}=\norm{g}_{L^\infty}= 1$. To control $I_4$ we need the following proposition.
\begin{prop}\label{prop:yabuta}
Let $F\in H^1$, $v\in L^\infty_{t,x}$, and $T_{t}$ be the convolution operator given by
\[
	T_{t} (f)(x)=\int f(x-y)\dd \nu_t(y),
\]
with $\{\nu_t\}_t$  be finite measures such that for some $\delta>0$ and for any $t>0$,
\begin{equation}\label{eq:decay_measure}
	 \int \brkt{1+\frac{\abs{x-y}}{t}}^{-n-\delta} \dd \abs{\nu_t}(y)\lesssim \brkt{1+\frac{\abs{x}}{t}}^{-n-\delta}.
\end{equation}
Let $G(t,x)$ be a measurable function on $\R^{n+1}_+$ such that $\dd \mu_G(t,x)=\abs{G(t,x)}^2\frac{\dd t}{t}\dd x$ is a Carleson measure. Then
\[
	\abs{\int\int_0^\infty Q_t T_t F(x)\, G(t,x) v(t,x)\frac{\dd t}{t}\dd x}\leq C \norm{F}_{H^1}\norm{\dd \mu_G}_{\mathcal{C}}^{\frac{1}{2}}\norm{v}_{L^\infty_{t,x}}.
\]
\end{prop}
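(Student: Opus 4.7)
The plan is to exploit the Coifman--Meyer--Stein tent-space duality $(T^{1})^{*}=T^{\infty}$ together with the Fefferman--Stein area-function characterisation of $H^{1}$, thereby avoiding any atomic decomposition of the Hardy space. First I would absorb $v$ into $G$: since $v\in L^{\infty}_{t,x}$, the measure $\dd\mu_{vG}(t,x):=\abs{v(t,x)G(t,x)}^{2}\frac{\dd t}{t}\dd x$ is also Carleson, with $\norm{\dd\mu_{vG}}_{\mathcal{C}}\leq \norm{v}^{2}_{L^{\infty}_{t,x}}\norm{\dd\mu_{G}}_{\mathcal{C}}$, so it suffices to establish the bound in the particular case $v\equiv 1$ and pay a factor $\norm{v}_{L^{\infty}_{t,x}}$ in the constant.

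Next, by the tent-space duality one has
\[
\abs{\iint_{\R^{n+1}_{+}}\Phi(t,x)\Psi(t,x)\frac{\dd t}{t}\dd x}\lesssim \norm{A\Phi}_{L^{1}(\R^{n})}\,\norm{\dd\mu_{\Psi}}_{\mathcal{C}}^{1/2},
\]
where $A\Phi(x):=\brkt{\iint_{\abs{y-x}<t}\abs{\Phi(t,y)}^{2}\frac{\dd t\,\dd y}{t^{n+1}}}^{1/2}$ is the tent-space area function and $\dd\mu_{\Psi}(t,x):=\abs{\Psi(t,x)}^{2}\frac{\dd t}{t}\dd x$. Applying this inequality with $\Phi=Q_{t}T_{t}F$ and $\Psi=G$, everything reduces to the square-function bound
\[
\norm{A(Q_{t}T_{t}F)}_{L^{1}(\R^{n})}\lesssim \norm{F}_{H^{1}(\R^{n})}.
\]

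To prove the latter, I would identify $Q_{t}T_{t}$ as convolution with the kernel $k_{t}:=\psi_{t}\ast\nu_{t}$, where $\psi_{t}(z)=t^{-n}\psi(z/t)$, and verify that $\{k_{t}\}_{t>0}$ is an admissible Littlewood--Paley family: $(i)$ the cancellation $\int k_{t}\,\dd z=0$ is immediate from $\int\psi\,\dd z=0$, which holds because $\widehat{\psi}$ vanishes near the origin; $(ii)$ the size estimate $\abs{k_{t}(x)}\lesssim t^{-n}(1+\abs{x}/t)^{-n-\delta'}$ (for any $\delta'>0$) follows from the Schwartz decay of $\psi$ combined with hypothesis \eqref{eq:decay_measure} on $\nu_{t}$; $(iii)$ the analogous estimate for $\nabla_{x}k_{t}$, with an extra factor $t^{-1}$, is obtained by passing the derivative onto $\psi$. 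Given these three properties, the classical Fefferman--Stein area-function characterisation of $H^{1}$ (see \cites{S,FS}) yields $\norm{A(Q_{t}T_{t}F)}_{L^{1}}\lesssim \norm{F}_{H^{1}}$, which completes the argument.

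The main obstacle is the verification in the final step: one has to show that the convolution of the smooth, well-localised $\psi_{t}$ with the \emph{rough} finite measure $\nu_{t}$ still inherits the pointwise decay and smoothness required of a Littlewood--Paley projector. The assumption \eqref{eq:decay_measure} is tailored exactly to permit this transfer of decay, after which the square-function estimate for $Q_{t}T_{t}F$ in terms of $\norm{F}_{H^{1}}$ is a standard application of the area-function theory.
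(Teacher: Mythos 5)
Your strategy is genuinely different from the paper's. The paper writes $\widehat{\psi}(\xi)=-\abs{\xi}e^{-\abs{\xi}}\widehat{u}(\xi)$, so that $Q_tT_tF=\tilde{Q}_tT_t\brkt{t\,\d_t\mathbb{P}_tF}$, applies Cauchy--Schwarz, and the $H^1$ side then sees only the Poisson semigroup; the Stein--Weiss theory of the generalised Cauchy--Riemann system $\Phi$ (subharmonicity of $\abs{\Phi}$, non-tangential maximal function) produces $\norm{F}_{H^1}$, while the adjoint $A_t^*=(\tilde{Q}_tT_t)^*$ is pushed onto the Carleson side and controlled by Lemma~\ref{lem:composition}. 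You instead keep $T_t$ on the $F$ side, appeal to the tent-space duality $(T^1)^*=T^\infty$, and reduce to the area-function estimate $\norm{A(Q_tT_tF)}_{L^1}\lesssim\norm{F}_{H^1}$. The duality step is correct, as is your verification of the cancellation, size and Lipschitz bounds for the composite kernel $k_t=\psi_t*\nu_t$.

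The gap is in the final citation. The classical Fefferman--Stein area-function characterisation concerns dilates $\psi_t$ of a \emph{fixed} mean-zero kernel $\psi$; here $k_t=\psi_t*\nu_t$ is a genuinely $t$-dependent family, since the measures $\nu_t$ are not themselves dilates of a fixed measure, so $\norm{A(k_t*F)}_{L^1}\lesssim\norm{F}_{H^1}$ does not follow from the cited result as stated. Closing it requires either (a) dominating $\abs{k_t*F}$ via \eqref{eq:decay_measure} and Cauchy--Schwarz by a superposition of translates of $\abs{\psi_t*F}$ and then invoking the change-of-aperture theorem for tent spaces, or (b) a vector-valued Calder\'on--Zygmund (equivalently, atomic) argument for square functions built from $t$-varying kernels. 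Both are standard but neither is a one-line citation, and (b) is precisely the atomic machinery you set out to avoid. This is exactly the obstruction the paper's Poisson factorisation circumvents: by adjointing $\tilde{Q}_tT_t$ onto the Carleson-measure side of the pairing, the $H^1$ side only ever involves $t\,\d_t\mathbb{P}_t$, for which the harmonic-extension argument is immediate.
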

\begin{proof}
By a density argument (see \cite{S0}*{pp.~231-232}) it is enough to prove the result for $F\in H^1$ with  compact frequency support away from the origin.
Let $\widehat{\psi}$ be the multiplier associated to $Q_t$. Since $\psi$ is supported in an annulus we can write
\[
	\widehat{\psi}(\xi)=-\abs{\xi}e^{-\abs{\xi}} \widehat{u}(\xi),
\]
where $\widehat{u}\in C^\infty_0(\R^n)$ and is  supported in the same annulus as $\widehat{\psi}$. Then
\[
	\widehat{\psi}(t\xi)= -t \abs{\xi}e^{-t\abs{\xi}} \widehat{u}(t\xi)= t \d_t \brkt{\widehat{\mathbb{P}_t}(\xi)}\widehat{u}(t\xi)
\]
where $\mathbb{P}_t$ stands for the Poisson kernel of $\R^{n+1}_+$. Therefore, if $\tilde{Q_t}$ stands for the convolution operator associated to the symbol $\widehat{u}(t\xi)$, using the commutativity of multipliers we have
\[
	Q_t T_t F(x)=\tilde{Q}_t T_t \brkt{ t\d_t\mathbb{P}_t F}(x).
\]
Let $A_t=\tilde{Q}_t T_t$. Then
\[
	I=\int\int_0^\infty Q_t T_t F(x)\, G(t,x) v(t,x)\frac{\dd t}{t}\dd x=\int\int_0^\infty t \d_t \mathbb{P}_t F(x)\, A_t^*\brkt{G(t,\cdot) v(t,\cdot)}\frac{\dd t}{t}\dd x.
\]
Let $\Phi$ be the generalised Cauchy-Riemann system for $F$ i.e.
\[
	\Phi=\brkt{\mathbb{P}_t F, \mathfrak{R}_1\mathbb{P}_t F,\ldots, \mathfrak{R}_n\mathbb{P}_t F},
\]
where $\mathfrak{R}_j$ denotes the $j$-th Riesz transform given by
\[
	\mathfrak{R}_j(f)(x)=-i\int_{\R^n} \frac{\xi_j}{\abs{\xi}} \widehat{f}(\xi) e^{i x\xi}\ddd \xi.
\]
For further information see e.g. \cite{SW}. It is known (see \cite{S}) that $\Phi$ satisfies
\[
	\abs{\d_t\mathbb{P}_t F}^2\lesssim \abs{\nabla_{t,x} \Phi}^2\thickapprox \abs{\Phi}\Delta_{t,x}\abs{\Phi}.
\]

Now, the Cauchy-Schwarz inequality yields
\begin{equation}\label{eq:technic_yabuta}
	\abs{I}\lesssim\brkt{\int \int_0^\infty t{\Delta_{t,x}\abs{\Phi}}\dd t\dd x}^{1/2}
	\brkt{\int \int_0^\infty \abs{\Phi}\abs{A_t^*\brkt{G(t,\cdot) v(t,\cdot)}}^2 \frac{\dd t}{t}\dd x}^{1/2}.
\end{equation}
Integrating by parts, the first factor in the previous inequality becomes
\[
	\brkt{\int \abs{\Phi(x,0)}\dd x}^{\frac{1}2}\lesssim \norm{F}_{H^1}^{1/2}.
\]
By the hypothesis of the proposition, $\abs{G(t,x)}^2\frac{\dd t}{t}\dd x$ is Carleson measure, therefore we have that
$\abs{G(t,x) v(t,x)}^2\frac{\dd t}{t}\dd x$ is also a Carleson measure with norm bounded by $\norm{v}_{L^\infty_{t,x}}^2\norm{\mathrm{d} \mu_G}_{\mathcal{C}}$. Now, by the hypothesis \eqref{eq:decay_measure} and the fact that $u\in \S$, the kernel of $A_t$, which we denote by $K_t$,  satisfies the condition \eqref{eq:decay}. So we can apply Lemma \ref{lem:composition} to deduce that
\[
	\int \abs{K_t(x-y)} \abs{\brkt{G(t,\cdot) v(t,\cdot)}(y)}^2\dd y \frac{\dd t}{t}\dd x
\]
is a Carleson measure. Since  $\sup_t \norm{K_t}_{L^1}<+\infty$ and
\[
	\abs{A_t^*\brkt{G(t,\cdot) v(t,\cdot)}(x)}\lesssim \brkt{\sup_{t}\norm{K_t}_{L^1}}^{1/2} \brkt{\int \abs{K_t(x-y)}\abs{\brkt{G(t,\cdot) v(t,\cdot)}(y)}^2\dd y}^{1/2},
\]
we deduce that
\begin{equation}\label{eq:carleson_technical}
	\abs{A_t^*\brkt{G(t,\cdot) v(t,\cdot)}(x)}^2\frac{\dd t}{t}\dd x,
\end{equation}
is a Carleson measure with norm bounded by a constant times $\norm{\mathrm{d}\mu_G}_{\mathcal{C}}\norm{v}_{L^\infty_{t,x}}^2$.
Therefore, the second term in \eqref{eq:technic_yabuta} is bounded by
\[
	\norm{\mathrm{d}\mu_G}_{\mathcal{C}}^{1/2}\norm{v}_{L^\infty_{t,x}}\brkt{\int \sup_{t>0} \sup_{\abs{x-y}<t}  \abs{\Phi(t,y)}\dd x}^{1/2}\lesssim \norm{F}_{H^1}^{1/2}\norm{\mathrm{d}\mu_G}_{\mathcal{C}}^{1/2}\norm{v}_{L^\infty_{t,x}}.
\]
So, putting this all together
\[
	\abs{I}\lesssim \norm{F}_{H^1}\norm{\mathrm{d}\mu_G}_{\mathcal{C}}^{1/2}\norm{v}_{L^\infty_{t,x}},
\]
which concludes the proof.
\end{proof}

\begin{cor}
\label{cor:yabuta} There exists a constant $C$ such that for any $F\in H^1$, $G\in \mathrm{BMO}$ and $v\in L^\infty_{t,x}$,
\[
	\abs{\int\int_0^\infty Q_t F(x)\, \tilde{Q}_t G(x)\, v(t,x)\frac{\dd t}{t}\dd x}\leq C \norm{F}_{H^1}\norm{G}_{\mathrm{BMO}}\norm{v}_{L^\infty_{t,x}}.
\]
\end{cor}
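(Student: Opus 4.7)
The corollary should follow by directly specialising Proposition \ref{prop:yabuta} to the case in which the convolution operator $T_t$ is the identity and the function $G(t,x)$ is taken to be $\tilde{Q}_t G(x)$ for a fixed $G \in \mathrm{BMO}$. The plan is therefore to verify the two hypotheses of the proposition in this setting.

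First, I would take $\nu_t = \delta_0$ (the Dirac mass at the origin), so that $T_t = \mathrm{Id}$. The decay condition \eqref{eq:decay_measure} is then trivial, since $\int (1+|x-y|/t)^{-n-\delta} \, d|\delta_0|(y) = (1+|x|/t)^{-n-\delta}$, so the required hypothesis holds with equality (even without the implicit constant).

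Next, I would let $\mathfrak{G}(t,x) := \tilde{Q}_t G(x)$ and verify that $d\mu_{\mathfrak{G}}(t,x) = |\tilde{Q}_t G(x)|^2 \tfrac{dt}{t}\,dx$ is a Carleson measure with $\|d\mu_{\mathfrak{G}}\|_{\mathcal{C}} \lesssim \|G\|_{\mathrm{BMO}}^2$. This is the classical Carleson-measure characterisation of $\mathrm{BMO}$: since $\hat{u}$ in the definition of $\tilde{Q}_t$ is a smooth function supported in an annulus away from the origin, $\tilde{Q}_t$ is a Littlewood--Paley operator of the standard type, and the estimate \eqref{eq:standard_CM} (which is used earlier in the analysis of Case II.a) applies verbatim. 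I would simply cite this together with the reference to \cite{S}.

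Plugging these choices into Proposition \ref{prop:yabuta} gives
\[
\left|\int\!\!\int_0^\infty Q_t F(x)\, \tilde{Q}_t G(x)\, v(t,x)\, \frac{dt}{t}\,dx\right|
\;\leq\; C\,\|F\|_{H^1}\,\|d\mu_{\mathfrak{G}}\|_{\mathcal{C}}^{1/2}\,\|v\|_{L^\infty_{t,x}}
\;\lesssim\; \|F\|_{H^1}\,\|G\|_{\mathrm{BMO}}\,\|v\|_{L^\infty_{t,x}},
\]
which is the claimed inequality. No essential obstacle is expected; the only small point to mention clearly is that the Carleson estimate for $\tilde{Q}_t G$ requires the symbol of $\tilde{Q}_t$ to be annulus-supported, which holds by construction in the proof of Proposition \ref{prop:yabuta}.
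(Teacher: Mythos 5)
Your proposal is exactly the paper's own argument: take $\nu_t=\delta_0$, set $G(t,x)=\tilde{Q}_t G(x)$, and invoke the Carleson estimate \eqref{eq:standard_CM} to verify the hypotheses of Proposition \ref{prop:yabuta}. The paper states this in one sentence; your write-up is a correct and careful expansion of the same reasoning.
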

\begin{proof} Put $G(t,x)=\tilde{Q}_t G(x)$, $\nu_t=\delta_0$ in Proposition \ref{prop:yabuta} and use \eqref{eq:standard_CM}.
\end{proof}

We wish to apply Corollary \ref{cor:yabuta} with $F=h$, $G=T_{\nu_{m_1}}^{\phase_1} f$ and \begin{equation}\label{eq:v_Yabuta}
	v(t,x)= {P_t^{2,v} R_t T_\gamma^{\phase_2}g (x)}\, m(t,x).
\end{equation}
By \eqref{eq:boundedness_of_m} and \eqref{eq:lack_of_cake_8} we have $\sup_{t,x}\abs{v(t,x)}\lesssim \norm{g}_{L^\infty}$, and by \ref{eq:SSS_infty} in Theorem \ref{linear_FIO} we have $T_{\nu_{m_1}}^{\phase_1} f\in \mathrm{BMO}$. Consequently we can indeed apply Corollary \ref{cor:yabuta} to obtain
\begin{equation}
\abs{I_4}\lesssim \norm{T_{\nu_{m_1}}^{\phase_1} f}_{\mathrm{BMO}} \norm{h}_{H^1} \norm{g}_{L^\infty}\lesssim
\norm{f}_{L^\infty} \norm{h}_{H^1} \norm{g}_{L^\infty} .
\end{equation}

For $I_6$, since \begin{equation}\label{eq:Carleson_technic}
	 \abs{\brkt{Q_t^u T_{\nu_{m_1}}^{\varphi_1}f(x)} \brkt{Q_t^{2,v} R_t T_\gamma^{\phase_2}g (x)}}\leq \frac{1}{2}\brkt{\abs{Q_t^u T_{\nu_{m_1}}^{\varphi_1}f(x)}^2+ \abs{Q_t^{2,v} R_t T_\gamma^{\phase_2}g (x)}^2}.
\end{equation}
and  both terms on the right hand side define a Carleson measure by \eqref{eq:standard_CM} and \eqref{eq:carleson} respectively, we have that
\[
	\dd\mu(x,t)=\brkt{Q_t^u T_{\nu_{m_1}}^{\varphi_1}f(x)} \brkt{Q_t^{2,v} R_t T_\gamma^{\phase_2}g (x)}\dd x \frac{\dd t}{t},
\]
is a Carleson measure and $	\norm{\dd\mu}_{\mathcal{C}}\lesssim \norm{f}_{L^\infty}^2+ \norm{g}_{L^\infty}^2=2$. Now \cite{FS}*{Theorem 4} and  \eqref{eq:boundedness_of_m} imply that
\[
	\begin{split}
	\abs{I_6}&\lesssim \int \int_0^1 \abs{\tilde{P}_t h(x)} \dd\abs{\mu}(x,t)\lesssim  \int \sup_{0<t<1} \sup_{\abs{x-y}<t} \abs{\tilde{P}_t h(y)} \dd x\lesssim \norm{h}_{H^1}.
	\end{split}
\]
\subsubsection*{{\bf Case II.c:} $m=-(n-1)$ and $H^1\times L^\infty \to L^1$ boundedness}

In this case, it remains to estimate $I_4$ and $I_6$ with $m_1=m_2=-(n-1)/2$, for $f\in H^1$ $g,h\in L^\infty$.

At this point, we shall recall some facts from the theory of local Hardy spaces useful for our current purposes.  It is shown in \cite{Gol} that a function $f$ belongs to the local Hardy space $h^1$ if, and only if $f\in L^1$ and $\mathfrak{R}_j((1-\widehat{\theta})(D)f)\in L^1$ where $\widehat{\theta}$ is a bump function supported near the origin and $\mathfrak{R}_j$  denotes the $j$-th Riesz transform. Moreover
\begin{equation}\label{eq:local_H1}
\begin{split}
	\norm{f}_{h^1} &\thickapprox \norm{f}_{L^1}+\sum_{j=1}^n \norm{\mathfrak{R}_j((1-\widehat{\theta})(D)f)}_{L^1}\thickapprox \norm{\widehat{\theta}(D)f}_{L^1} +\norm{(1-\widehat{\theta})(D) f}_{H^1},
\end{split}
\end{equation}
where the last estimate follows from the well-known characterization of the $H^1$ norm (see e.g. \cite{S0}*{p. 221}). We would like to mention that different choices of $\theta$ yield equivalent norms.

Since the frequency support of the symbol associated to $Q_t^u$ is supported in an annulus, we can find a smooth compactly supported bump function $\widehat{\theta}$  such that, for any $0<t<1$
\[
	Q_t^u=Q_t^u\brkt{\brkt{1-\widehat{\theta}}(D)}.
\]
Then one can write
\[
	\begin{split}
{I_4} &={\int \int_0^1  \brkt{Q_t^u\brkt{(1-\widehat{\theta})(D) T_{\nu_{m_1}}^{\varphi_1}f(x)}} \brkt{P_t^{2,v}  R_t T_\gamma^{\phase_2}g (x)} \tilde{Q}_{t} h(x)\, m(t,x) \frac{\dd t}{t}\dd x}.
\end{split}
\]
Taking $F=(1-\widehat{\theta})(D) T_{\nu_{m_1}}^{\varphi_1}f$, $G=h$ and $v(t,x)$ as in \eqref{eq:v_Yabuta}, and applying Corollary \ref{cor:yabuta}, \eqref{eq:local_H1} and the natural embedding of $L^\infty$ into $\mathrm{BMO}$, one has
that
\[
	\abs{I_4}\lesssim\norm{(1-\widehat{\theta})(D)T_{\nu_{m_1}}^{\varphi_1}f}_{H^1} \norm{g}_{L^\infty} \norm{h}_{\mathrm{BMO}}\lesssim \norm{T_{\nu_{m_1}}^{\varphi_1}f}_{h^1}\norm{g}_{L^\infty}\norm{h}_{L^\infty}.
\]
Finally, applying \ref{eq:Peloso} in Theorem \ref{linear_FIO} and the embedding of  $H^1$ into $h^1$, one obtains
\[
	\abs{I_4}\lesssim \norm{f}_{H^1}\norm{g}_{L^\infty}\norm{h}_{L^\infty}.
\]

To control $I_6$ we observe that a similar analysis to that of $I_4$ allows us to write
\[
	\begin{split}
{I_6} &={\int \int_0^1  \brkt{Q_t^u\brkt{(1-\widehat{\theta})(D) T_{\nu_{m_1}}^{\varphi_1}f(x)}} \brkt{Q_t^{2,v}  R_t T_\gamma^{\phase_2}g (x)} \tilde{P}_{t} h(x)\, m(t,x) \frac{\dd t}{t}\dd x}.
\end{split}
\]
Taking $v(t,x)=\tilde{P}_{t} h(x)\, m(t,x)$,  $F=(1-\widehat{\theta})(D) T_{\nu_{m_1}}^{\varphi_1}f(x)$, $G(t,x)=Q_t^{2,v}  R_t T_\gamma^{\phase_2}g (x)$ and $\nu_t=\delta_0$, and applying Proposition \ref{prop:yabuta} and the fact that $\abs{G(t,x)}^2\dd x \dd t/t$ is a Carleson measure (see  \eqref{eq:carleson}) yields
\[
	\abs{I_6}\lesssim\norm{(1-\widehat{\theta})(D)T_{\nu_{m_1}}^{\varphi_1}f}_{H^1} \norm{g}_{L^\infty} \norm{h}_{\mathrm{BMO}}\lesssim \norm{f}_{H^1}\norm{g}_{L^\infty}\norm{h}_{L^\infty}.
\]

\subsubsection*{{\bf Case II.d:} $m=-(n-1)$ and $L^\infty\times H^1 \to L^1$ boundedness}

In this case $g\in H^1$, $f,h\in L^\infty$ and $m_1=m_2=-(n-1)/2$.
Without loss of generality we can assume that $\norm{f}_{L^ \infty}=\norm{h}_{L^ \infty}=1$.

Arguing as in the previous case, we can find a smooth compactly supported bump function $\widehat{\theta}$  such that
\begin{equation}\label{eq:I_6}
	I_6={\int\int_0^{1} \brkt{Q_t^u T_{\nu_{m_1}}^{\varphi_1}f(x)} \brkt{Q_t^{2,v} R_t (1-\widehat{\theta})(D) T_\gamma^{\phase_2}g (x)} \tilde{P}_{t}h(x) m(t,x) \frac{\dd t}{t}\dd x}.
\end{equation}
Then, we use Proposition \ref{prop:yabuta} with $v(t,x)= \tilde{P}_{t}h(x) m(t,x)$, $G(t,x)=Q_t^u T_{\nu_{m_1}}^{\varphi_1}f(x)$, $F=(1-\widehat{\theta})(D)T_\gamma^{\phase_2}g (x)$ and $\dd\nu_t(x)=K_t(x)\dd x$, where $K_t$ is the kernel of $R_t$ defined in \eqref{eq:Miracolous_kernel}. We obtain
\[
\abs{I_6}\lesssim\norm{T_{\nu_{m_1}}^{\varphi_1}f}_{\mathrm{BMO}}\norm{(1-\widehat{\theta})(D)T_{\nu_{m_1}}^{\varphi_1}g}_{H^1} \norm{h}_{L^\infty}\lesssim \norm{f}_{L^\infty}\norm{g}_{H^1}\norm{h}_{L^\infty},
\]
where the last estimate follows from \eqref{eq:local_H1}, \ref{eq:SSS_infty} and \ref{eq:Peloso} in Theorem \ref{linear_FIO}.

In order to control $I_4$, take $G(t,x)=\brkt{Q_t^u T_{\nu_{m_1}}^{\varphi_1}f(x)} \tilde{Q}_{t}h(x) m(t,x)$, which, by a similar argument to that for \eqref{eq:Carleson_technic}, it is seen to give rise to a Carleson measure with norm bounded by a constant (here we use that $\norm{f}_{L^ \infty}=\norm{h}_{L^ \infty}=1$). Then
\[
	I_4=\int \int_0^ 1 \brkt{P_t^ {2,v} T_\gamma^{\phase_2}g }(x) R_t^*\brkt{G(t,\cdot)}(x)\frac{\dd t}{t}\dd x.
\]
By Lemma \ref{lem:composition}, $R_t^*\brkt{G(t,\cdot)}(x)$ gives rise to a Carleson measure, so we can apply  \cite{G}*{Corollary 7.3.6}  to obtain
\[
	\abs{I_4}\lesssim \int \sup_{0<t<1} \sup_{\abs{x-y}<t} \abs{P_t^ {2,v} T_\gamma^{\phase_2}g(x)}\dd x\lesssim \norm{T_\gamma^{\phase_2}g}_{h^ 1}\lesssim \norm{g}_{H^1},
\]
where the second inequality follows from the definition of the $h^1$ norm in \eqref{eq:h1} in such a way that the implicit constant depends polynomially on $v$, and the last estimate follows from \ref{eq:Peloso} in Theorem \ref{linear_FIO}.

\section{Endpoint cases for $n=1$}\label{sect:1d}

In this section we provide counterexamples in dimension $n=1$ to some of the endpoint results which are valid in dimension two and higher.

Let us start by discussing the failure of the $L^\infty\times L^\infty \to \mathrm{BMO}$ boundedness.

\begin{prop} \label{Prop_5.1}There exist $f,g\in L^\infty_c$ and $\sigma\in S^{0}_{1,0}(n,2)$ such that $T^{\phase_1,\phase_2}_\sigma(f,g)\not \in \mathrm{BMO}$ where $\phase_1(x,\xi)=x\xi+\abs{\xi}$ and $\phase_2(x,\eta)=x\eta$.
\end{prop}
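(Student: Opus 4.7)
The plan is to exhibit an elementary counterexample by choosing the symbol to depend only on the spatial variable, so that the bilinear operator factors. Let $\chi\in C^{\infty}_c(\R)$ with $\chi\equiv 1$ near the origin, and set $\sigma(x,\xi,\eta):=\chi(x)$, which clearly lies in $S^{0}_{1,0}(1,2)$ and is compactly supported in $x$. For any $f,g\in L^{\infty}_c(\R)\subset L^{2}(\R)$, Fubini combined with Fourier inversion in $\eta$ yields the factorisation
\[
T^{\phi_1,\phi_2}_\sigma(f,g)(x)\;=\;\chi(x)\,F(x)\,g(x),\qquad F(x):=\int\widehat f(\xi)\,e^{i(x\xi+|\xi|)}\ddd\xi.
\]
Splitting the $\xi$-integral over $\xi>0$ and $\xi<0$ and using $|\xi|=\pm\xi$ according to the sign, one recognises, for $f\in L^{2}(\R)$, $F(x)=f_+(x+1)+f_-(x-1)$, where $f_\pm=\tfrac12(f\pm iHf)$ are the positive/negative frequency projections and $H$ is the Hilbert transform.

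Choosing $f=g=\mathbf{1}_{[0,1]}$ (both in $L^{\infty}_c$) and using the explicit formula $Hf(y)=\pi^{-1}\log|y/(y-1)|$, I would compute
\[
f_+(x+1)\;=\;\tfrac12\mathbf{1}_{[-1,0]}(x)+\tfrac{i}{2\pi}\log\!\left|\tfrac{x+1}{x}\right|,
\]
while $f_-(x-1)$ is smooth on a neighbourhood of $0$. Since $g=\mathbf{1}_{[0,1]}$ vanishes on $(-1,0)$ and equals $1$ on $(0,1)$, on a small neighbourhood of $x=0$ one has
\[
\Im\,T^{\phi_1,\phi_2}_\sigma(f,g)(x)\;=\;\tfrac{1}{2\pi}\,\chi(x)\,\mathbf{1}_{(0,1)}(x)\,\log\!\left|\tfrac{x+1}{x}\right|\;+\;R(x),
\]
with $R\in L^{\infty}_{\mathrm{loc}}$ near $0$. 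The mechanism is transparent: the logarithmic blow-up of $F$ at $x=0$ survives on the right half of any symmetric neighbourhood of $0$, but is annihilated by $g$ on the left half — precisely the ``BMO$\,\times\,L^{\infty}\not\subset\,$BMO'' phenomenon.

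To verify non-membership in $\mathrm{BMO}$, I would test against the intervals $I_\varepsilon=(-\varepsilon,\varepsilon)$ as $\varepsilon\to 0^+$. Setting $u(x):=\mathbf{1}_{(0,\varepsilon)}(x)(-\log|x|)$, an elementary computation gives $\mathrm{Avg}_{I_\varepsilon}u=(1-\log\varepsilon)/2\to+\infty$; since $u\equiv 0$ on $(-\varepsilon,0)$,
\[
\frac{1}{|I_\varepsilon|}\int_{I_\varepsilon}|u-\mathrm{Avg}_{I_\varepsilon}u|\,dx\;\ge\;\frac{1}{2\varepsilon}\int_{-\varepsilon}^{0}|\mathrm{Avg}_{I_\varepsilon}u|\,dx\;=\;\frac{1-\log\varepsilon}{4}\;\xrightarrow[\varepsilon\to 0^+]{}\;\infty.
\]
Since $\chi(0)=1$ and the remainder $R$ together with the smooth factor $\log|x+1|$ contribute only bounded quantities on $I_\varepsilon$, they cannot cancel this divergence, so the $\mathrm{BMO}$ seminorm of $\Im T^{\phi_1,\phi_2}_\sigma(f,g)$ on $I_\varepsilon$ is unbounded and in particular $T^{\phi_1,\phi_2}_\sigma(f,g)\notin\mathrm{BMO}$.

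The only genuine technical point is to justify the factorisation $T^{\phi_1,\phi_2}_\sigma(f,g)=\chi F g$ beyond the Schwartz class where the oscillatory integral defining the operator in \eqref{defn RS FIO} is absolutely convergent. This is handled by a routine $L^{2}$-density argument: $F=f_+(\cdot+1)+f_-(\cdot-1)\in L^{2}(\R)$ by Plancherel and $g\in L^{\infty}$, so $\chi F g\in L^{2}$, and the pointwise identity is obtained as the $L^{2}$-limit of the Schwartz identity under approximation of $f$ and $g$ by Schwartz functions. Once the factorisation is in hand, the remainder of the argument is elementary. I do not foresee a substantial obstacle.
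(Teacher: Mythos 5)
Your proof is correct and takes essentially the same approach as the paper's: both choose the $x$-dependent symbol $\sigma(x,\xi,\eta)=\chi(x)$ so the operator factors as $\chi\cdot T_1^{\phase_1}(f)\cdot g$, both identify $T_1^{\phase_1}$ with the combination of shifts and Hilbert transforms coming from the $|\xi|$-term, and both detect the log singularity truncated on one side by $g=\chi_{[0,1]}$ via the averaging argument on $(-\varepsilon,\varepsilon)$. The only cosmetic difference is the choice $f=\chi_{[0,1]}$ instead of the paper's $f=\chi_{[-1,1]}$, which slightly simplifies the bookkeeping.
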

\begin{proof}
Consider $\sigma(x,\xi,\eta)=\chi(x)\in S^0_{1,0}(n,2)$, with $\chi\in \mathcal{C}^\infty_0(\R^n)$ supported in $(-2,2)$, equal to $1$ in $[-1,1]$ and even.
Then for any $f$ and $g$ we have that
\begin{equation}\label{eq:model}
	T_\sigma^{\phase_1,\phase_2}(f,g)(x)=\chi(x)T_{1}^{\phase_1}(f)(x) g(x),
\end{equation}
where
\[
	T_{1}^{\phase_1}(f)(x):=\int \widehat{f}(\xi) e^{i \abs{\xi}+i x\xi}\ddd \xi=\frac{f(x+1)+f(x-1)}{2}+i\frac{Hf(x+1)-Hf(x-1)}{2},
\]
and $H$ stands for the Hilbert transform. Recall now that  $H\chi_{[-1,1]}(x)=\frac{1}{\pi} \log \abs{\frac{x+1}{x-1}}$. Therefore
\[
	T_{1}^{\phase_1}(\chi_{[-1,1]})(x)=S(x)+\frac{i}{2\pi}\log \abs{\frac{(x+2)(x-2)}{x^2}},
\]
where $S(x)=\frac{1}{2}\brkt{\chi_{[-2,0]}(x)-\chi_{[0,2]}(x)}$, which is trivially bounded by $1$. Define
\[
	h(x)=T_{1}^{\phase_1}(\chi_{[-1,1]})(x)-S(x).
\]
Now observe that for any $0<\varepsilon<1$,
\[
	-2\pi i \int_0^\varepsilon h(x)\dd x=\varepsilon\brkt{\log(\varepsilon+2)+\log(2-\varepsilon)-2\log \varepsilon}+2\brkt{\log(\varepsilon+2)-\log(2-\varepsilon)-2}.
\]
Then, if we define $h_+(x)=h(x)\chi_{[0,1]}(x)$,
\[
	\abs{{\rm Avg}_{(-\varepsilon,\varepsilon)}h_+}=\frac{1}{4\pi}\abs{{\log \frac{(2+\varepsilon)(2-\varepsilon)}{\varepsilon^2}}-\frac{2}{\varepsilon}\brkt{2-\log \frac{2+\varepsilon}{2-\varepsilon}}},
\]
which yields
\[
	\lim_{\varepsilon\to 0^+}\abs{{\rm Avg}_{(-\varepsilon,\varepsilon)}h_+}=+\infty.
\]
But then since
\[
	\frac{1}{2\varepsilon}\int_{-\varepsilon}^\varepsilon \abs{h_+(x)-{\rm Avg}_{(-\varepsilon,\varepsilon)}h_+}\dd x\geq \frac{1}{2\varepsilon}\int_{-\varepsilon}^0 \abs{{\rm Avg}_{(-\varepsilon,\varepsilon)}h_+}\dd x =\frac{1}{2}\abs{{\rm Avg}_{(-\varepsilon,\varepsilon)}h_+},
\]
it follows that $h_+\not \in \mathrm{BMO}$, and therefore, since $S\in L^\infty$,
\[
	T_\sigma^{\phase_1,\phase_2}(\chi_{[-1,1]}(x),\chi_{[0,1]})=T_{1}^{\phase_1}(\chi_{[-1,1]})(x)\chi_{[0,1]}(x)\not \in \mathrm{BMO}.
\]
\end{proof}

\begin{cor} The operator $T_ {\sigma}^{\phase_1,\phase_2}$ defined in \eqref{eq:model} is not bounded from $L^\infty\times H^1\to L^1$.
 \end{cor}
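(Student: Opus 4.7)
The plan is to reduce the claim to Proposition \ref{Prop_5.1} via the Fefferman $H^1$--$\mathrm{BMO}$ duality. The essential algebraic observation is that for the specific symbol and phases of \eqref{eq:model} the operator has the symmetric shape
\[
T_\sigma^{\phase_1,\phase_2}(f,g)(x) = \chi(x)\, T_1^{\phase_1}(f)(x)\, g(x),
\]
so that, by rearranging the factors inside the integrand,
\[
\int T_\sigma^{\phase_1,\phase_2}(f,g)(x)\, h(x)\, \dd x = \int g(x)\cdot T_\sigma^{\phase_1,\phase_2}(f,h)(x)\, \dd x.
\]
In other words, the second argument of $T_\sigma^{\phase_1,\phase_2}$ and a putative $L^\infty$ test function enter symmetrically, and the right-hand side is precisely the natural duality pairing of $g$ with the function $T_\sigma^{\phase_1,\phase_2}(f,h)$.

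Assume, toward a contradiction, that $T_\sigma^{\phase_1,\phase_2}: L^\infty \times H^1 \to L^1$ is bounded. Then for all $f, h \in L^\infty$ and all $g \in H^1$,
\[
\left|\int g(x)\, T_\sigma^{\phase_1,\phase_2}(f,h)(x)\, \dd x\right| \leq \|T_\sigma^{\phase_1,\phase_2}(f,g)\|_{L^1}\,\|h\|_{L^\infty} \leq C\,\|f\|_{L^\infty}\,\|h\|_{L^\infty}\,\|g\|_{H^1}.
\]
Taking the supremum over $g$ in the unit ball of $H^1$ (it is enough to test against finite sums of $H^1$-atoms, which form a dense subset of $H^1$) and invoking Fefferman's duality $(H^1)^* = \mathrm{BMO}$, one deduces
\[
\|T_\sigma^{\phase_1,\phase_2}(f,h)\|_{\mathrm{BMO}} \leq C\,\|f\|_{L^\infty}\,\|h\|_{L^\infty},
\]
i.e.\ $T_\sigma^{\phase_1,\phase_2}$ would be bounded from $L^\infty \times L^\infty$ to $\mathrm{BMO}$.

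It then suffices to invoke the explicit counterexample already produced in the proof of Proposition \ref{Prop_5.1}: the choice $f = \chi_{[-1,1]}$ and $h = \chi_{[0,1]}$ yields $T_\sigma^{\phase_1,\phase_2}(f,h) = T_1^{\phase_1}(\chi_{[-1,1]})\chi_{[0,1]} \notin \mathrm{BMO}$, contradicting the preceding step. The only small technical point is the legitimacy of the Fefferman-duality identification: since $\chi\, T_1^{\phase_1}(f)\, h$ is compactly supported and locally integrable (explicitly, it is the sum of a bounded piece and a logarithmic piece), its pairing against $H^1$-atoms is absolutely convergent, so the standard atomic characterisation of $(H^1)^*$ identifies it as a $\mathrm{BMO}$ function. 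Beyond this routine verification, no new harmonic analysis is required; all the substance of the argument is packaged inside Proposition \ref{Prop_5.1}.
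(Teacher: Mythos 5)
Your proposal is correct and follows essentially the same route as the paper: both use the symmetry identity $\langle T_\sigma^{\phase_1,\phase_2}(f,g),h\rangle=\langle T_\sigma^{\phase_1,\phase_2}(f,h),g\rangle$ (which holds here because $T_\sigma^{\phase_1,\phase_2}(f,g)=\chi\, T_1^{\phase_1}(f)\, g$), invoke the $H^1$--$\mathrm{BMO}$ duality to reduce $L^\infty\times H^1\to L^1$ boundedness to $L^\infty\times L^\infty\to\mathrm{BMO}$, and then apply Proposition~\ref{Prop_5.1}. Your added remark on why the duality pairing is legitimate here is a reasonable, if routine, precaution the paper leaves implicit.
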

 \begin{proof} Observe that by the definition of the operator, for any $f,g,h$
 \[
 	\langle T_ {\sigma}^{\phase_1,\phase_2}(f,g),h\rangle=\langle T_ {\sigma}^{\phase_1,\phase_2}(f,h),g\rangle.
 \]
 Then, since $\mathrm{BMO}=(H^1)^*$, $T_ {\sigma}^{\phase_1,\phase_2}:L^\infty\times H^1\to L^1$ is equivalent to $T_ {\sigma}^{\phase_1,\phase_2}:L^\infty\times L^\infty\to \mathrm{BMO}$. But as we saw in Proposition \ref{Prop_5.1}, this is impossible, and therefore  $T_ {\sigma}^{\phase_1,\phase_2}$  is not bounded from $L^\infty\times H^1\to L^1$
 \end{proof}

\begin{prop}
The operator $T_ {\sigma}^{\phase_1,\phase_2}$ defined in \eqref{eq:model} is not bounded from $L^\infty\times L^p\to L^p$ for any $1<p<\infty$.
\end{prop}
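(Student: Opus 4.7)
The plan is to exploit the same logarithmic singularity in $T_1^{\phase_1}(\chi_{[-1,1]})$ that was constructed in the proof of Proposition \ref{Prop_5.1}. With $\sigma$ as in \eqref{eq:model}, one has
\[
    T_\sigma^{\phase_1,\phase_2}(f,g)(x) = \chi(x)\, T_1^{\phase_1}(f)(x)\, g(x),
\]
so if $f \in L^\infty$ produces an output $T_1^{\phase_1}(f)$ with an unavoidable logarithmic blow-up at some point $x_0$ in the interior of $\supp\chi$, then boundedness $L^\infty \times L^p \to L^p$ would force multiplication by $\log(1/|x-x_0|)$ to be a bounded operation on $L^p$, which is false.

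Concretely, I would take $f = \chi_{[-1,1]} \in L^\infty$. By the computation in the proof of Proposition \ref{Prop_5.1},
\[
    T_1^{\phase_1}(f)(x) = S(x) + \frac{i}{2\pi}\log\left|\frac{(x+2)(x-2)}{x^2}\right|,
\]
where $S$ is bounded. Since $\chi(0)=1$, the imaginary part of $\chi(x)T_1^{\phase_1}(f)(x)$ is comparable to $\tfrac{1}{\pi}\log(1/|x|)$ in a small neighbourhood of the origin, while the real part remains bounded there.

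Next I would construct an explicit $g \in L^p$ supported in a small interval around $0$ such that $g(x)\log(1/|x|) \notin L^p$. A natural candidate is
\[
    g(x) = \frac{\chi_{(0,1/2)}(x)}{|x|^{1/p}\,\log(e/|x|)},
\]
for which $\|g\|_{L^p}^p \asymp \int_0^{1/2} \frac{\dd x}{|x|(\log(e/|x|))^p}<+\infty$ whenever $p>1$, while $|g(x)\log(1/|x|)|^p \gtrsim |x|^{-1}$ near $0$, giving a non-integrable singularity.

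Combining these two ingredients, $T_\sigma^{\phase_1,\phase_2}(f,g)$ satisfies
\[
    |T_\sigma^{\phase_1,\phase_2}(f,g)(x)| \gtrsim |g(x)|\log(1/|x|)
\]
on a neighbourhood of $0$, hence $T_\sigma^{\phase_1,\phase_2}(f,g)\notin L^p(\R)$ although $f\in L^\infty$ and $g\in L^p$. The only subtlety to verify is the local lower bound for the imaginary part of $T_1^{\phase_1}(f)$ near the origin, which follows immediately from the explicit formula above; the rest is elementary integration. No further machinery is required, since the operator has already been reduced to a pointwise product with an unbounded multiplier.
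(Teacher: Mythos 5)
Your proposal is correct. The core ingredient is the same as the paper's, namely the logarithmic singularity of $T_1^{\phase_1}(\chi_{[-1,1]})$ near $x=0$, but the way it is exploited differs. The paper argues by contradiction: it tests the operator against the family $g_\varepsilon=\chi_{[0,\varepsilon]}$ and shows that the uniform bound $\frac{1}{\varepsilon}\int_0^\varepsilon|\log x|^p\,\dd x\leq C$ must fail as $\varepsilon\to 0$. You instead build a single explicit pair $(f,g)$ with $f=\chi_{[-1,1]}\in L^\infty$ and $g(x)=\chi_{(0,1/2)}(x)\,|x|^{-1/p}(\log(e/|x|))^{-1}\in L^p$ so that $T_\sigma^{\phase_1,\phase_2}(f,g)\notin L^p$; the verifications you sketch ($g\in L^p$ for $p>1$, and $|g(x)|\log(1/|x|)\notin L^p$ near $0$ because the weight produces a $1/|x|$ singularity) are all elementary substitutions and correct, and the pointwise lower bound $|T_\sigma^{\phase_1,\phase_2}(f,g)(x)|\gtrsim|g(x)|\log(1/|x|)$ near the origin follows from $\chi\equiv1$ there and from taking the imaginary part of $T_1^{\phase_1}(f)$. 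The paper's version is marginally shorter and avoids the ad hoc construction of a borderline $L^p$ function, but your argument yields a concrete counterexample and is equally valid; both reduce to the same unboundedness of the pointwise multiplier $\log(1/|x|)$ on $L^p$.
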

\begin{proof} Assume on the contrary that there exists $1<p<\infty$ such that the operator is bounded. In particular, taking $f=\chi_{[-1,1]}$ and $g_\varepsilon(x)=\chi_{[0,\varepsilon]}(x)$ with $0<\varepsilon<1$, yields
\[
	\norm{T_\sigma^{\phase_1,\phase_2}(f,g)}_{L^p}^p=\int_{0}^\varepsilon \abs{T_{1}^{\phase_1}(f)(x)}^p \dd x \leq C \varepsilon
\]
with $C$ independent on $\varepsilon$. This would imply in particular that
\[
	\frac{1}{\varepsilon}\int_{0}^\varepsilon \abs{\log x }^p\dd x \leq C
\]
uniformly in $\varepsilon$. But letting $\varepsilon$ tend to zero, we will reach a contradiction.
\end{proof}
\begin{rem}
We recall that when the phases $\phase_1(x,\xi)=x\xi$ and $\phase_2(x,\eta)=x\eta$, i.e. when we are dealing with one dimensional bilinear pseudodifferential operators, then all the end-point estimates discussed above are actually valid and there are no counterexamples in these cases, see e.g. \cite{GK}, \cite{GT1}. Thus, the phenomenon observed above concerning the lack of boundedness at the end points in dimension one, is a feature of bilinear $\mathrm{FIO}$s.
\end{rem}
\section{The high frequency case: Non-Banach endpoints} \label{sect:non_banach}

The fact that the dual of $L^p$ spaces for $0<p<1$ is trivial precludes the use of duality arguments as in the previous sections. Nevertheless, the proof of the non-Banach results follows a similar line of thought as the Banach case, albeit with some modifications. In this section, we confine ourselves to indicating the main differences in the argument to be made for obtaining the desired boundedness results.

Proceeding similarly to the analysis of the Banach case, we can decompose the original operator as in  \eqref{low-high}. Replacing \eqref{eq:annulus} by its discrete version and using the fact that $\widetilde{\sigma}_1$ in \eqref{eq:newsigma1} has fixed compact support in $(\xi,\eta)$, we can expand $\widetilde{\sigma}_1$ in a Fourier Series. This will reduce matters to the study of an operator that is of the type
\begin{equation} \label{three3}
\sum_{u,v\in \Z^n} \sum_{k\in \Z} T^{\phase_1}_{\nu_1^{t,u}}(f)(x) T^{\phase_2}_{\mu_1^{t,v}}(g)(x) \frac{m(t,x,u,v)}{(1 + |u|^2 + |v|^2)^N},
\end{equation}
which can be regarded as a discrete counterpart of \eqref{three2}. In this expression $N$ is a sufficiently large integer. For the sake of brevity and to facilitate the comparison with the Banach case, we shall from now on set $t=2^{-k}$ for $k\in \Z$.

One also has a similar expression for $T_{\sigma_2}^{\phase_1, \phase_2}$, but once again, it will be enough to prove the boundedness of the part corresponding to $\sigma_1$. Thus, the study reduces to proving the estimates for an operator of the type
\[
	T(F,G)(x)=\sum_{k\geq 0} T^{\phase_1}_{\nu_1^{t,u}}(f)(x) T^{\phase_2}_{\mu_1^{t,v}}(g)(x){m(t,x,u,v)}
\]
with polynomial bounds in $\abs{u}$ and $\abs{v}$.

To obtain the desired boundedness, we shall first consider the case $n\geq 2$. For any fixed $-(n-1)\leq m<0$, we let $0<\alpha<2$ be such that $m=-(n-1)\brkt{\frac{1}{\alpha}-\frac{1}{2}}$. We claim that the operator $T$ defined above satisfies
\begin{equation}\label{eq:endpoints}
	T: H^{\alpha}\times L^2 \to L^{\beta}\qquad {\rm and }\qquad  T: L^2\times H^{\alpha}\to L^{\beta},
\end{equation}
whenever $\frac{1}{\beta}=\frac{1}{\alpha}+\frac{1}{2}=1-\frac{m}{n-1}$. These yield the same boundedness for $T_{\sigma_1}$.

By the results of the previous section, one has
\[
	T_{\sigma_1}: H^p\times H^q\to  L^1\qquad\text{and}\qquad T_{\sigma_1}: H^q\times H^p\to  L^1,
\]
with
\[
	\frac{1}{p}={\frac{3}{4}-\frac{1}{2\alpha}}=\frac{1}{2}+\frac{m}{2(n-1)}, \qquad\text{and}\qquad \frac{1}{q}=\frac{1}{2}-\frac{m}{2(n-1)}.
\]
Using these results, assuming that \eqref{eq:endpoints} holds, and applying complex interpolation gives us that
\[
	T_{\sigma_1}: H^p\times H^q\to  L^r
\]
boundedly provided $m=-(n-1)\brkt{\abs{\frac{1}{p}-\frac{1}{2}}+\abs{\frac{1}{q}-\frac{1}{2}}}$ and $\frac{1}{r}=\frac{1}{p}+\frac{1}{q}\geq 1$ and $1\leq p,q\leq \infty$. In an abuse of notation, we identify $H^\infty$ with $L^\infty$ in the previous expressions.

In order to prove \eqref{eq:endpoints}, by a similar argument to the Banach case, but using sums instead of integrals, one reduce matters to the study of operators
\begin{equation}\label{eq:disc_I1}
   I_1=\sum_{k\geq 0} \brkt{Q_t^u T_{\nu_{m_1}}^{\varphi_1}f(x)} \brkt{P_t^v T_{\mu_1^t}^{\phase_2} g(x)} m(t,x),
\end{equation}
where the error terms can be handled routinely as in the Banach case, where we use the well-known inequality
\[
	\brkt{\sum_{k} \abs{a_k}}^r\leq \sum_{k} \abs{a_k}^r, \qquad \text{for $0<r\leq 1$},
\]
instead of the Minkowski integral inequality.

Let us start by proving the $H^{\alpha}\times L^2 \to L^{r}$ boundedness, in which case we have $m_1=m$ and $m_2=0$  in \eqref{eq:disc_I1}. We observe that $\mu_1^t$ is actually independent of $t$. Therefore $I_1$ is a composition of a paraproduct $\tilde{\Pi}(F,G)=\sum_{k\geq 0} \brkt{Q_t^u F(x)} \brkt{P_t^v G(x)} m(t,x,u,v)$, satisfying similar properties as those considered in Lemma \ref{lem:Paraproduct}, and two FIOs. In particular, $\tilde{\Pi}$ is a bilinear pseudodifferential operator with symbol
\[
	\lambda(x,\xi,\eta)=\sum_{k\geq 0} \widehat{\psi}(t\xi)e^{it\xi \cdot v}\widehat{\theta}(t\eta)e^{it\eta \cdot u} m(t,x,u,v),
\]
belonging to $S^0_{1,0}(n,2)$ with seminorms depending polynomially on $\abs{u}$ and $\abs{v}$. Then, \cite{GK}*{Theorem 1.1} yields that this paraproduct is $H^{\alpha}\times L^2 \to L^{r}$ bounded. Since $0<t<1$, there exists a smooth bump function $\widehat{\theta}$ such that $Q_t^u=Q_t^u\brkt{(1-\widehat{\theta})(D)}$. Therefore, \cite{Gol}*{Lemma 4} yields that $\tilde{\Pi}$ maps $h^{p}\times L^2 \to L^{r}$ and \ref{eq:SSS}, \ref{eq:Peloso} in Theorem \ref{linear_FIO} yield the result.

For proving the $L^2\times H^{\alpha}\to L^{r}$ boundedness, one proceeds as in the Banach case and matters reduce to giving the desired estimate for
\[
    \sum_{k\geq0} \brkt{Q_t^u T_{\nu_{m_1}}^{\varphi_1}f(x)} \brkt{\tilde{P}_{t}^v R_t T_{\gamma}^{\phase_2}g(x)}m(t,x,u,v),
\]
which is the discrete counterpart of \eqref{eq:final_reduction}, where $m_1=0$, $m_2=m$ and $R_t$ is the discrete analogue of \eqref{eq:Miracle}. More precisely,
$R_t (G)(x)=\int K_t(x-y) G(y)\dd y$, with
\begin{equation*}
	K_t(z)=\sum_{-\log_2 \kappa\leq j\leq k}2^{j m_2} \Psi\brkt{2^{k-j}z} 2^{n(k-j)},
\end{equation*}
and as in \eqref{eq:new_psi}
\begin{equation}\label{eq:new_psi_2}
	\widehat{\Psi}(\eta)=\widehat{\psi}(\kappa_1\eta)^2\abs{\eta}^{m_2}:=\widehat{\psi}(\kappa_1\eta) \widehat{\tilde{\Psi}}(\eta),
\end{equation}
where $\widehat{\psi}$ is smooth, radial and positive with
\[
	\supp \widehat{\psi}\subset \set{\xi:\, 2^{-2}\leq \abs{\xi}\leq 1},
\]
and
\[
	\sum_{j\in \Z} \widehat{\psi}(2^j \eta)^2=1\qquad {\text{for any}\, \eta\neq 0}.
\]
Hence, the desired boundedness would follow if we establish that the operator given by
\begin{equation*}
    S(F,G)(x)=\sum_{k\geq 0} \brkt{Q_t^u F(x)} \brkt{\tilde{P}_{t}^v R_t G(x)}m(t,x,u,v)
\end{equation*}
is $L^2\times h^{\alpha}\to L^{r}$ bounded. Observe also that there exists a smooth bump function $\widehat{\theta}$ such that $R_t=R_t\brkt{(1-\widehat{\theta})(D)}$. Then \cite{Gol}*{Lemma 4} implies that it is enough to prove the $L^2\times H^\alpha\to L^r$ boundedness of the operator $S(F,G)$ defined above. To this end, the Cauchy-Schwarz inequality yields
\[
	\abs{S(F,G)(x)}\lesssim \brkt{\sum_{k\geq 0}  \abs{Q_t^u F(x)}^2}^{1/2} \brkt{\sum_{k\geq 0}  \abs{P_t^v R_t G(x)}^2}^{1/2}.
\]
Therefore, using H\"older's inequality and standard quadratic estimates, we have
\[
	\norm{S(F,G)}_{L^{r}}\lesssim \norm{F}_{L^2} \brkt{\int {\brkt{\sum_{k\geq 0} \abs{P_t^v R_t G(x)}^2}^{\alpha/2}}\dd x}^{1/\alpha}.
\]
Observe now that by \eqref{eq:new_psi_2}
\[
	P_t R_t G(x)=\sum_{-\log_2 \kappa\leq j\leq k} s^{m_2} \int \big(\theta_t*\tilde{\Psi}_{ts}\big)(y) \big(\psi_{ts\kappa_1}*G\big)(x-y)\dd y
\]
where we set $s=2^j$. Then for any $b>0$ (to be later determined)
\[
	\abs{P_t R_t G(x)}\leq \sum_{-\log_2 \kappa\leq j\leq k} s^{m_2} \int \abs{\theta_t*\tilde{\Psi}_{ts}(y)}\brkt{1+\frac{\abs{y}}{ts}}^b\dd y\, M^{**}_b(G,\psi_{ts\kappa_1})(x),
\]
where
\[
	M^{**}_b(G,\psi_u)(x):=\sup_{y\in \R^n} \frac{\abs{\psi_u*G(x-y)}}{\brkt{1+\frac{\abs{y}}{u}}^b},
\]
denotes Peetre's maximal function, see \cite{Trie}.

Now \cite[p. 16]{Trie} yields
\[
	M^{**}_b(G,\psi_u)(x)\lesssim \left[M\brkt{\abs{\psi_u*G}^{\frac{b}{n}}}(x)\right]^\frac{n}{b},
\]
for any $x\in \R^n$ and any $u>0$, where $M$ stands for the Hardy-Littlewood maximal operator.  Moreover, by the results in \cite{G}*{Appendix K}, one has for any $N>n+b$
\[
	 \abs{\theta_t*\tilde\Psi_{ts}(y)}\lesssim  (t\max\brkt{s,1})^{-n}
	 \brkt{1+\frac{\abs{y}}{t\max\brkt{s,1}}}^{-N},
\]
which in turn implies that
\[
	 \sup_{0<t<1, \kappa< s<1/t}\int \abs{\theta_t*\tilde\Psi_{ts}(y)}\brkt{1+\frac{\abs{y}}{ts}}^b\dd y\lesssim 1.
\]
Thus, we have the pointwise inequality
\[
	\abs{P_t R_t G(x)}\lesssim \sum_{-\log_2 \kappa\leq j\leq k}s^{m_2}\left[M\brkt{\abs{\psi_{st\kappa_1}*G}^{\frac{b}{n}}}(x)\right]^\frac{n}{b}.
\]
Therefore, for any $q>\max\brkt{\frac{b}{n},1}$
\[
	\begin{split}
	\brkt{\sum_{k\geq 0} \abs{P_t R_t G(x)}^{q}}^{1/q}&\lesssim \sum_{j=-\log_2 \kappa}^\infty s^{m_2}\brkt{ \sum_{k\geq j}\left[M\brkt{\abs{\psi_{st\kappa_1}*G}^{\frac{b}{n}}}(x)\right]^\frac{qn}{b}}^{\frac{1}{q}}\\
	&\leq C_{m_2} \brkt{ \sum_{k\geq 0} \left[M\brkt{\abs{\psi_{t\kappa_1}*G}^{\frac{b}{n}}}(x)\right]^\frac{qn}{b}}^{\frac{1}{q}}
	\end{split}
\]
where $C_{m_2}=\sum_{j=-\log_2\kappa}^\infty s^{m_2}=\sum_{j=-\log_2\kappa}^\infty 2^{j m_2}<+\infty$, and we have used the fact that $st=2^{j-k}$, which allows us to re-index the sum.

Hence, for any $p>\frac{b}{n}$ the boundedness of the vector-valued maximal operator
\[
	\norm{\brkt{\sum_{k\geq 0}\abs{M{f_k}(\cdot)}^{\frac{qn}{b}}}^{\frac{b}{qn}}}_{L^{\frac{pn}{b}}(\R^n)}\lesssim \norm{\brkt{\sum_{k\geq 0}\abs{{f_k}(\cdot)}^{\frac{qn}{b}}}^{\frac{b}{qn}}}_{L^{\frac{pn}{b}}(\R^n)},
\]
(proved by Fefferman-Stein in \cite{FS2}*{Theorem 1}) yields
\[
	\norm{\brkt{\sum_{k\geq 0} \abs{P_t R_t G}^{q}}^{1/q}}_{L^p(\R^n)}\lesssim \left[\int \brkt{\sum_{k\geq 0} {\abs{\psi_{t\kappa_1}*G(x)}}^q}^{\frac{p}{q}}\dd x\right]^{\frac{1}{p}}.
\]
Now if we define $\delta_{\kappa_1}G(y)=G(\kappa_1 y)$, a change of variables yields that the last term is equal to
\[
	\left[\int \brkt{\sum_{k\geq 0} {\abs{\psi_{t}*\delta_{\kappa_1}G(x)}}^q}^{\frac{p}{q}}\dd x\right]^{\frac{1}{p}}	\kappa_1^{\frac{n}{p}}\lesssim \norm{\delta_{\kappa_1}G}_{F^0_{p,q}}.
\]
where $F^0_{p,q}$ stands for the corresponding Triebel-Lizorkin space (see \cite{Trie} for the general definition and further properties of these spaces). In particular, taking $b<\alpha n<2n$, $q=2$ and using the identification of the Triebel-Lizorkin space $F_{\alpha,2}^0$ given in \cite{Trie}*{Theorem 1, p. 92}, we obtain
\[
	\norm{S(F,G)}_{L^{r}}\lesssim \norm{F}_{L^2}\norm{\delta_{\kappa_1}G}_{h^{\alpha}}.
\]
But this yields the desired result because
\[
	\norm{\delta_{\kappa_1}G}_{h^{p}}\leq \norm{\delta_{\kappa_1}G}_{H^{\alpha}}\thickapprox \norm{G}_{H^{\alpha}},
\]
where the last equality follows from the fact that
\[
	\norm{\delta_{\kappa_1}G}_{H^{\alpha}}^\alpha=\int \sup_{t>0} \abs{\widehat{\Theta}(tD) \delta_{\kappa_1}G(x)}^\alpha\dd x=\int \sup_{t>0} \abs{\widehat{\Theta}(\kappa_1 tD)G(\kappa_1 x)}^\alpha\dd x=\kappa_1^{-n}\norm{G}_{H^{\alpha}}^\alpha.
\]
\\

The case $n=1$ is simpler, in the sense that, as in the previous case, matters reduce to the study of an operator of the type \eqref{eq:disc_I1} with $\mu_1^t$ independent of $t$. That is, $I_1$ is the composition of the paraproduct $\tilde{\Pi}$ defined above and two FIOs. In particular, Theorem \ref{linear_FIO} and the boundedness of the paraproducts \cite{GK}*{Theorem 1.1} yield
\[
	T_{\sigma_1}:H^p\times H^q\to L^{r},
\]
provided $m=0$, $1\leq \frac{1}{r}=\frac{1}{p}+\frac{1}{q}$ and $p,q<\infty$.

\section{A composition formula} \label{sec:proof_of_asymptotic}

In this section we prove  the Theorem \ref{left composition with pseudo} concerning composition of a pseudodifferential operator and an FIO.

Let $\chi\in \mathcal{C}^{\infty}(\mathbb{R}^n)$ such that $0\leq \chi\leq 1,$ $\chi(z)=1$ for $|z|<{\epsilon}/{2}$ and $\chi(z)=0$ for $|z|>\epsilon$ where $0<\epsilon<C_1/(4C_0)$, with $C_0=\sup_{(x,\xi)\in \Omega}\sup_{\abs{\alpha}=2}\brkt{ \abs{\d^\alpha_x \varphi(x,\xi)}/\bra{\xi}}$ and $C_1$ is the constant in \ref{partone}. We decompose $\sigma_t(x,\xi)$ into two parts $\textbf{I}_1 (t,x,\xi)$ and $\textbf{I}_2 (t,x,\xi)$ where
\begin{equation*}
  \textbf{I}_1 (t,x,\xi)=\iint a_t(y,\xi)\rho(t\eta)\, (1-\chi(x-y))\,e^{i(x-y)\cdot\eta+i\phase(y,\xi)-i\phase(x,\xi)}\, \ddd\eta \dd y,
  \end{equation*}
  and
\begin{equation*}
  \textbf{I}_2 (t,x,\xi)=\iint a_t(y,\xi)\rho(t\eta)\,\chi(x-y)\,e^{i(x-y)\cdot\eta+i\phase(y,\xi)-i\phase(x,\xi)}\ddd\eta \dd y.
  \end{equation*}\\

We begin by analysing $\textbf{I}_1 (t,x,\xi)$. We claim that
\begin{equation}\label{eq:claim_0}
	\sup_{x\in \R^n }\abs{\d^\gamma_x \d^\beta_\xi \textbf{I}_1 (t,x,\xi)}\lesssim \bra{\xi}^{m-M\brkt{\frac{1}{2}-\varepsilon }-\abs{\gamma}} t^{M\varepsilon}.
\end{equation}
To simplify the exposition, we are going omit the estimates in the $x$-derivatives, that is we only study the case $\gamma=0$, as this argument contains the main ideas of the proof.

The chain rule and Leibniz formula tell us that $\d^\beta_\xi \textbf{I}_1 (t,x,\xi)$ is a finite linear combination of elements of the type
\[
	\iint \d^{\beta_1}_\xi a_t(y,\xi)\rho(t\eta)\,(1-\chi(x-y))\,\prod_{j=1}^k \d^{b_j}_\xi\brkt{\phase(y,\xi)-\phase(x,\xi)} e^{i(x-y)\cdot\eta+i\phase(y,\xi)-i\phase(x,\xi)}\ddd\eta \dd y.
\]
with $\beta_1+\beta_2=\beta$, $b_1+\ldots+b_k=\beta_2$ and, if $\beta_2\neq 0$, $\abs{b_j}\geq 1$ for each $j$. Since $\d^{b_j}_\xi\brkt{\phase(y,\xi)-\phase(x,\xi)}=\int_0^1 \nabla_x \d^{b_j}_\xi\phase(x+s(y-x),\xi))\dd s \cdot (y-x)$, we can express each of these terms as a finite linear combination of  terms of the type
\[
	\iint \d^{\beta_1}_\xi a_t(y,\xi)\rho(t\eta)\,(1-\chi(x-y))\,g_{k,\beta_2}(x,y,\xi) (x-y)^{\kappa} e^{i(x-y)\cdot\eta+i\phase(y,\xi)-i\phase(x,\xi)}\ddd\eta \dd y,
\]
for some multi-index $\kappa$, with $\abs{\kappa}=k$ and $g_{k,\beta_2}$ an smooth function satisfying
\[
	\abs{\d^\gamma_x \d^\nu_y g_{k,\beta_2}(x,y,\xi)}\lesssim \bra{\xi}^{k-\abs{\beta_2}}.
\]
Integrating by parts in $\eta$, the previous expression becomes a linear combination of terms like
\begin{equation}\label{eq:tecnichal_1}
	t^k \iint \d^{\beta_1}_\xi a_t(y,\xi) (\d^\kappa\rho)(t\eta)\,(1-\chi(x-y))\,g_{k,\beta_2}(x,y,\xi) e^{i(x-y)\cdot\eta+i\phase(y,\xi)-i\phase(x,\xi)}\ddd\eta \dd y.
\end{equation}
To estimate these terms, we introduce now the differential operators
\[
^{t}L_{\eta} =-i\sum_{j=1}^{n}\frac{x_j -y_j}{|x-y|^2}\partial_{\eta_j} \quad \text{and} \quad
^{t}L_{y} =\frac{1}{|\nabla_{y}\phase(y,\xi)|^2 -i\Delta_{y}\phase(y,\xi)}(1-\Delta_{y}).
\]
Now integration by parts yields that the term \eqref{eq:tecnichal_1} is equal to
\begin{equation*}
t^k \iint L^{N_{2}}_{y} \left[e^{-iy\cdot\eta}(1-\chi(x-y)) \d^{\beta_1}_\xi a_t(y,\xi) g_{k,\beta_2}(x,y,\xi) L^{N_1}_{\eta}[(\d^{\kappa}\rho)(t\eta)]\right]\,e^{ix\cdot\eta+i\phase(y,\xi)-i\phase(x,\xi)}\ddd\eta\, \dd y,
  \end{equation*}
which is a linear combination of terms of the type
\begin{equation}\label{eq:star}
 t^{k+N_1} \iint \Lambda(x,y,\xi,\eta,N_1,N_2)(\d^{\kappa+\gamma}\rho)(t\eta)\,e^{ix\cdot\eta+i\phase(y,\xi)-i\phase(x,\xi)}\ddd\eta\, dy,
  \end{equation}
  with $\Lambda(x,y,\xi,\eta,N_1,N_2)=L^{N_{2}}_{y} \left[e^{-iy\cdot\eta}(1-\chi(x-y)) \d^{\beta_1}_\xi a_t(y,\xi) g_{k,\beta_2}(x,y,\xi) \frac{{(x-y)^\gamma}}{\abs{x-y}^{2N_1}}\right]$ and $\abs{\gamma}=N_1$. Observe that since $t\leq 1$ and $\varepsilon <1/2$,
 \[
 	t^{k+N_1}\abs{(\d^{\kappa+\gamma} \rho) (t\eta)}\lesssim t^{k+N_1}\bra{t\eta}^{-k-N_1(1-\varepsilon )}\leq t^{N_1\varepsilon }\bra{\eta}^{-k-N_1(1-\varepsilon )}\leq t^{N_1\varepsilon }\bra{\eta}^{-N_1/2}.
 \]

Because of \ref{partone} one has
\[
	|\abs{\nabla_{y}\phase(y,\xi)}^2 -i\Delta_{y}\phase(y,\xi)|\geq \abs{\nabla_{y}\phase(y,\xi)}^2\gtrsim \abs{\xi}^2.
\]
Then, one can show that %
\[
	\abs{\Lambda(x,y,\xi,\eta,N_1,N_2)}\lesssim \bra{\eta}^{2N_2} \bra{\xi}^{m-\abs{\beta_1}} \bra{\xi}^{k-\abs{\beta_2}}\abs{x-y}^{-N_1} \bra{\xi}^{-2N_2}.
\]
Therefore, the term in \eqref{eq:star} is bounded by
\[
	t^{N_1 \varepsilon } \bra{\xi}^{m-\abs{\beta}} \bra{\xi}^{\abs{\beta}-2N_2}\iint_{|x-y|>\epsilon} \langle \eta\rangle^{2N_2-N_1/2} |x-y|^{-N_1} \ddd\eta\, \dd y.
\]
Thus, taking $	N_1=4N_2+M+2n$ {and} $N_2>(\abs{\beta}+M)/2$, claim \eqref{eq:claim_0} follows.\\

We now proceed to the analysis of $\textbf{I}_2 (t,x,\xi)$. First we make the change of variables $\eta=\nabla_x\phase(x,\xi)+\zeta$ in the integral defining $\textbf{I}_2 (t,x,\xi)$ and then expand $\rho(t\eta)$ in a Taylor series to obtain
\begin{equation*}\label{Eq:ralpha}
  \begin{aligned} \rho(t\nabla_x\phase(x,\xi)+t\zeta) & = \sum_{0 \leq |\alpha|<M} t^{|\alpha|}\frac{\zeta^\alpha}{\alpha !} (\partial_\xi^\alpha\rho)(t\nabla_x\phase(x,\xi)) + t^{M} \sum_{|\alpha|=M} C_\alpha {\zeta^\alpha} r_\alpha(t, x,\xi,\zeta),
  \end{aligned} \end{equation*}
where $\displaystyle  r_\alpha(t, x,\xi,\zeta)  = \int_0^1 (1-s)^{M-1} (\partial_\xi^{\alpha} \rho)(t\nabla_x\phase(x,\xi)+st\zeta) \dd s$. Then, setting
\[
\Phi(x,y,\xi)=\phase(y,\xi)-\phase(x,\xi)+(x-y)\cdot\nabla_x\phase(x,\xi),
\]
we can write
\[
\textbf{I}_2 (t,x,\xi)= \sum_{|\alpha|<M} \frac{t^{|\alpha|}}{\alpha!}\, \sigma_{\alpha}(t,x,\xi) + t^{M}\, \sum_{|\alpha|=M} C_\alpha\, R_{\alpha}(t,x,\xi),
\]
where
\begin{align*}
\sigma_{\alpha}(t,x,\xi) %
& = (\partial_\xi^\alpha \rho)(t\nabla_x\phase(x,\xi)) \partial_y^{\alpha}\left.\left[ e^{i\Phi(x,y,\xi)}a_t(y,\xi)\chi(x-y) \right]\right|_{y=x}
\end{align*}
and
\[
R_{\alpha}(t,x,\xi) = \iint e^{i(x-y)\cdot\zeta} e^{i\Phi(x,y,\xi)} \zeta^{\alpha} \chi(x-y) \,a_t(y,\xi)\, r_\alpha(t, x,\xi,\zeta) \dd y \ddd\zeta.
\]\\

We start by the study of $\sigma_\alpha$, for $\abs{\alpha}\geq 1$. Faa-Di Bruno's formula yields that for any multi-indices $\beta,\gamma$,
\begin{equation}\label{eq:FaadB}
	\abs{\d^\beta_x \d^\gamma_\xi \brkt{(\partial_\xi^\alpha \rho)(t\nabla_x\phase(x,\xi))}}\lesssim \sum_{j=1}^{\abs{\gamma}+\abs{\beta}} t^j \bra{t\xi}^{-j-\abs{\alpha}(1-\varepsilon )} \bra{t\xi}^{-\abs{\alpha}\varepsilon } \bra{\xi}^{j-\abs{\gamma}}.
\end{equation}
Since $t\leq 1$ we have that  $$\bra{t\xi}^{-j-\abs{\alpha}(1-\varepsilon )}\bra{t\xi}^{-\abs{\alpha}\varepsilon }\leq t^{-j-\abs{\alpha}(1-\varepsilon )} \bra{\xi}^{-j-\abs{\alpha}(1-\varepsilon )},$$ which yields
\begin{equation}\label{symbol estimate for rho}
	\abs{\d^\gamma_\xi \d_x^\beta \brkt{(\partial_\xi^\alpha \rho)(t\nabla_x\phase(x,\xi))}}\lesssim t^{\abs{\alpha}(\varepsilon -1)} \bra{\xi}^{-\abs{\alpha}(1-\varepsilon )-\abs{\gamma}}.
\end{equation}
We claim that, for any multi-indices $\beta,\gamma$ and any $\xi\in \R^n$,
\begin{equation}\label{eq:claim_1}
	\abs{\d^\gamma_\xi \d_x^\beta \brkt{\left.\partial_y^{\alpha}\left[ e^{i\Phi(x,y,\xi)} a_t(y,\xi) \chi(x-y)\right]\right|_{y=x}}}\lesssim  \bra{\xi}^{{|\alpha|/2}-\abs{\gamma}+m}.
\end{equation}
Observe that the last two estimates together imply that $\sigma_\alpha$ satisfies \eqref{eq:statement_1}.
In order to prove the claim, observe first that, since $\chi=1$ in a neighbourhood of the origin, $\d^\alpha \chi (0)=0$ for $\abs{\alpha}\geq 1$. This yields
\[
	\left.\partial_y^{\alpha}\left[ e^{i\Phi(x,y,\xi)} a_t(y,\xi) \chi(x-y)\right]\right|_{y=x}=
	\left.\partial_y^{\alpha}\left[ e^{i\Phi(x,y,\xi)} a_t(y,\xi) \right]\right|_{y=x},
\]
which is equal to
\begin{equation}\label{eq:1}
	\d^\alpha_x a_t(x,\xi)+\sum_{\alpha_1+\alpha_2=\alpha,\abs{\alpha_1}\geq 2}
	 C_{\alpha_1,\alpha_2}\left.\partial_y^{\alpha_1}\left[ e^{i\Phi(x,y,\xi)} \right]\right|_{y=x} \d^{\alpha_2}a_t(x,\xi),
\end{equation}
since $\left.\nabla_y\Phi(x,y,\xi)\right|_{y=x}=0$.  Faa-Di Bruno's formula yields, for $\abs{\gamma}\geq 1$,
\[
\partial_y^{\gamma} e^{i\Phi(x,y,\xi)}=\sum_{\gamma_1 + \cdots+ \gamma_k =\gamma} C_\gamma (\partial^{\gamma_{1}}_{y}\Phi(x,y,\xi))\cdots (\partial^{\gamma_{k}}_{y}\Phi(x,y,\xi))\,e^{i\Phi(x,y,\xi)},
\]
where the sum ranges of $\gamma_j$ such that $|\gamma_{j}|\geq 1$ for $j=1,2,\dots, k$ and $\gamma_1 + \cdots+ \gamma_k =\gamma$ for some $k \in \N$.  Taking into account that $\Phi(x,x,\xi)=0$ and $\left.\partial_y\Phi(x,y,\xi)\right|_{y=x}=0$, we have that
\[
\left.\partial_y^{\gamma} e^{i\Phi(x,y,\xi)}\right|_{y=x}=\sum_{\gamma_1 + \cdots+ \gamma_k =\gamma, \abs{\gamma_j}\geq 2} C_\gamma (\partial^{\gamma_{1}}_{x}\varphi(x,\xi))\cdots (\partial^{\gamma_{k}}_{x}\varphi(x,\xi)).
\]
Since  $\sum_{j=1}^{k} |\gamma_j |\leq |\gamma|$, we actually have $2k\leq |\gamma|$,
which yields
\[
	\abs{\d^\nu_\xi \d^\beta_x\left[ \left.\partial_y^{\gamma} e^{i\Phi(x,y,\xi)}\right|_{y=x}\right]}\lesssim \abs{\xi}^{k-\abs{\nu}}\leq \bra{\xi}^{ \frac{|\gamma|}{2}-\abs{\nu}}.
\]
Thus,  using the above estimates and \eqref{eq:1}, we have that
\[
	\abs{\d^\nu_\xi \d^\beta_x\left[ \left.\partial_y^{\alpha} e^{i\Phi(x,y,\xi)} a_t(x,\xi)\right|_{y=x}\right]}\lesssim %
	\bra{\xi}^{m-\abs{\nu}} \sum_{j=2}^{\abs{\alpha}} \bra{\xi}^{\frac{j}{2}}\lesssim
	 \bra{\xi}^{\frac{\abs{\alpha}}{2}+m-\abs{\nu}} \quad {\text{for $\abs{\alpha}\geq 2,$}}
\]
and
\[
\abs{\d^\nu_\xi \d^\beta_x\left[ \left.\partial_y^{\alpha} e^{i\Phi(x,y,\xi)} a_t(x,\xi)\right|_{y=x}\right]}=\abs{\d^\nu_\xi \d^{\beta+\alpha}_x a_t(x,\xi)}\lesssim \bra{\xi}^{m-\abs{\nu}}\quad {\text{for $\abs{\alpha}=1$}},
\]
which imply \eqref{eq:claim_1}.\\

Let $\alpha$ with $\abs{\alpha}=M$. To estimate the remainder $R_{\alpha}$ we observe that it is sufficient to control a term of the type
\begin{equation}\label{modified rest term}
\begin{split}
	&\tilde{R}_\alpha(t,s,x,\xi)=\iint e^{i(x-y)\cdot\zeta}  e^{i\Phi(x,y,\xi)}a_t(y,\xi) \chi(x-y) \zeta^\alpha \brkt{\d^\alpha \rho}\brkt{t\nabla_x\varphi(x,\xi)+st \zeta}\dd y \ddd\zeta\\
	&=\iint e^{i(x-y)\cdot\zeta}  D_y^{\alpha} \left[ e^{i\Phi(x,y,\xi)}\, \chi(x-y) a_t(y,\xi)\right] \brkt{\d^\alpha \rho}\brkt{t\nabla_x\varphi(x,\xi)+st \zeta}\dd y \ddd \zeta\\
	&=:R_\alpha^I(t,s,x,\xi)+ R_\alpha^{I\!\!I}(t,s,x,\xi)
\end{split}
\end{equation}
uniformly in $s\in (0,1)$, where  $R_\alpha^I(t,s,x,\xi)$ is the integral expression given by
\[
	\iint e^{i(x-y)\cdot\zeta}  D_y^{\alpha} \left[ e^{i\Phi(x,y,\xi)}\, \chi(x-y) a_t(y,\xi)\right] g\left(\frac{\zeta}{\bra{\xi}}\right) \brkt{\d^\alpha \rho}\brkt{t\nabla_x\varphi(x,\xi)+st \zeta}\dd y \ddd \zeta,
\]
$R_\alpha^{I\!\!I}=\tilde{R}_\alpha-R_\alpha^{I}$ and $g\in \mathcal{C}_0^\infty(\R^n)$ such that $g(x)=1$ for $|x|<r/2$ and $g(x)=0$ for $|x|>r$, for some small $8C_0\epsilon<r<C_1/2$, where $C_1$ is the constants appearing in  the assumption \ref{partone} and $C_0$ is defined above.\\

We claim that
\begin{equation}\label{eq:claim_2}
	\sup_{0<s<1}\sup_{x\in \R^n }\abs{\d^\gamma_x \d^\beta_\xi R^I_\alpha (t,s,x,\xi)}\lesssim t^{-\abs{\alpha}(1-\varepsilon )}\bra{\xi}^{m-\abs{\alpha}(\frac{1}2-\varepsilon )-\abs{\beta}}.
\end{equation}
As we did before, we are going omit the estimates in the $x$-derivatives to simplify the exposition. To control $R_\alpha^I$ we observe that assumption \ref{partone} on $\phase$ yields
\[
\begin{aligned}
\abs{\nabla_x\phase(x,\xi)+s\zeta}\leq & (C_2\sqrt{2}+r)\abs{\xi}\quad \text{and}\\
\abs{\nabla_x\phase(x,\xi)+s\zeta}\geq & |\nabla_x\phase|-|\zeta| \geq (C_1-r)\abs{\xi},
\end{aligned}
\]
for any $s\in [0,1]$. Therefore, $\abs{\nabla_x\phase(x,\xi)+s\zeta}$ and $\abs{\xi}$ are equivalent uniformly in $s\in [0,1]$. Arguing as in the derivation of \eqref{symbol estimate for rho}, this yields that for $|\zeta|\leq r\bra{\xi}$
\[
\abs{\d^\kappa_\zeta \d^\beta_x \d^\gamma_\xi \brkt{(\partial^\alpha \rho)(t\nabla_x\phase(x,\xi))+st \zeta}}\lesssim s^{\abs{\kappa}} t^{-\abs{\alpha}(1-\varepsilon )} \bra{\xi}^{-\abs{\alpha}(1-\varepsilon )-\abs{\kappa}-\abs{\gamma}}.
\]

On the other hand observe that
\[
	\abs{\d_\xi^\gamma \d^\kappa_\zeta g\left(\frac{\zeta}{\bra{\xi}}\right) }\lesssim \bra{\xi}^{-\abs{\gamma}-\abs{\kappa}}.
\]
We define
\[
	b_{\alpha,\alpha_2}(x,y,\xi,\zeta,s,t)=\d^{\alpha_2}_y a_t(y,\xi)g\left(\frac{\zeta}{\bra{\xi}}\right)  \brkt{\d^\alpha \rho}\brkt{t\nabla_x\varphi(x,\xi)+st \zeta}).
\]
It follows from the previous estimates that
\begin{equation}\label{eq:bs}
	\sup_{0<s<1}\abs{\d^\kappa_\zeta \d^\beta_x \d^\gamma_\xi \d^\nu_y b_{\alpha.\alpha_2}(x,y,\xi,\zeta,s,t)}\lesssim t^{-\abs{\alpha}(1-\varepsilon )}\bra{\xi}^{m-\abs{\alpha}(1-\varepsilon )-\abs{\kappa}-\abs{\gamma}}.
\end{equation}
 Observe that $R_\alpha^I(t,s,x,\xi)$ is a finite linear combination of expressions of the type
\[
	 \iint e^{i(x-y)\cdot\zeta}\,\partial_y^{\alpha_1} \chi(x-y)\, b_{\alpha,\alpha_2}(x,y,\xi,\zeta,s,t)\, \prod_{j=1}^ k \d^{a_j}_y(\Phi(x,y,\xi))\, e^{i\Phi(x,y,\xi)} \dd y \ddd\zeta,
\]
where $\alpha_1+\alpha_2+\alpha_3=\alpha$, $a_1+\ldots+a_k=\alpha_3$ with $0\leq k\leq \abs{\alpha_3}$ such that if $\alpha_3\neq 0$ then $\abs{a_j}\geq 1$ for $j=1,\ldots, k$. Therefore, $\d^\beta_\xi R^I_{\alpha}$ can be expressed as a finite linear combination of terms of the form
\[
	\begin{split}
	 &\iint \partial_y^{\alpha_1} \chi(x-y)\, \d^{\beta_1}_{\xi}b_{\alpha,\alpha_2}(x,y,\xi,\zeta,s,t)  \prod_{j=1}^ k \d^ {b_j}_{\xi}\d^{a_j}_y(\Phi(x,y,\xi))\, \prod_{j=1}^{l}\d^{d_j}_\xi \Phi(x,y,\xi)  \\
	 &\qquad \times  e^{i\Phi(x,y,\xi)+i(x-y)\cdot\zeta} \dd y \ddd\zeta,
	 \end{split}
\]
where  $\beta_1+\beta_2+\beta_3=\beta$, $b_1+\ldots+b_k=\beta_2$, $d_1+\ldots+d_l=\beta_3$ and if $\beta_3\neq 0$ then $\abs{d_ j}\geq 1$ for $j=1,\ldots l$ with $l\leq \abs{\beta_3}$. Observe that
\[
	\Phi(x,y,\xi)=(x-y)^T\cdot\int_0^1\int_0^1 u (\nabla^2_x  \varphi)(x+uv(y-x),\xi)\, \dd u\dd v \cdot (x-y).
\]
Thus
$
	\abs{\d^d_\xi \Phi(x,y,\xi)}\lesssim \abs{\xi}^{1-\abs{d}} |x-y|^2,
$
which implies that
\begin{equation} \label{bound}
	\abs{\prod_{j=1}^{l}\d^{d_j}_\xi \Phi(x,y,\xi)}\lesssim\abs{\xi}^{2l} \abs{x-y}^{2l} \abs{\xi}^{-\abs{\beta_3}-l}\leq (1+\abs{\xi}\abs{x-y})^{2\abs{\beta_3}}\abs{\xi}^{-\abs{\beta_3}}.
\end{equation}
We have also that, for $\abs{a}=1$,
\begin{equation}\label{eq:improvement}
	\d^{a}_y \Phi(x,y,\xi)= \int_0^1 (\nabla_x\d^{a}_x)\phi(x+u(y-x)),\xi)\dd u\cdot (y-x)=g_{a}(x,y,\xi)\cdot (x-y),
\end{equation}
with $g_{a}$ smooth satisfying
\begin{equation}\label{eq:improvement_2}
	\abs{\d^{b}_\xi \d^{\gamma}_y g_{a}\brkt{x,y,\xi}}\lesssim \abs{\xi}^{1-\abs{b}}.
\end{equation}
On the other hand, if $\abs{a}\geq 2$,
\[
	{\abs{\d^{b}_\xi \d^{a}_y \Phi(x,y,\xi)}}=\abs{\d^{b}_\xi \d^{a}_y \varphi(y,\xi)}\lesssim \abs{\xi}^{1-\abs{b}}.
\]

To continue with the proof, first assume that $k\leq \abs{\alpha}/2$. Then the previous estimates yield
\begin{equation}\label{eq:first_case}
	\abs{  \prod_{j=1}^ k \d^ {b_j}_{\xi}\d^{a_j}_y(\Phi(x,y,\xi))}\lesssim \abs{\xi}^{k-\abs{\beta_1}} \abs{x-y}^{k'}\leq \abs{\xi}^{\abs{\alpha}/2} (1+\abs{\xi}\abs{x-y})^{\abs{\alpha}/2}  \abs{\xi}^{-\abs{\beta_1}},
\end{equation}
where $k'\leq k$ is the cardinality of the set of indices $j$ such that $\abs{a_j}=1$.
Let
\begin{equation}\label{eq:operator}
L_\zeta=\frac{(1-\bra{\xi}^2\Delta_\zeta)}{1+\bra{\xi}^2|x-y|^2}, \qquad \text{so} \quad L_\zeta^N e^{i(x-y)\cdot\zeta}=e^{i(x-y)\cdot\zeta}.
\end{equation}
Integration by parts with $L_\zeta$ yields that  $\d^\beta_\xi R^I_{\alpha}$ can be written as a linear combination of  terms of the type
\[
\begin{aligned}
&  \iint \frac{e^{i(x-y)\cdot\zeta}\,\partial_y^{\alpha_1} \chi(x-y)\, \d^{\beta_1}_{\xi}b_{\alpha,\alpha_2}(x,y,\xi,\zeta,s,t)\,  \prod_{j=1}^ k \d^ {b_j}_{\xi}\d^{a_j}_y\Phi(x,y,\xi)\, \prod_{j=1}^{l}\d^{d_j}_\xi \Phi(x,y,\xi)}{(1+\bra{\xi}^2 |x-y|^2)^N}\\
&\qquad \times \bra{\xi}^{|\kappa|} \partial_\zeta^{\kappa} e^{i\Phi(x,y,\xi)} \dd y \ddd\zeta.
\end{aligned}
\]
Taking absolute values and using \eqref{eq:bs}, \eqref{bound} and \eqref{eq:first_case} we have that the previous expression is bounded by
\[
\begin{aligned}
&t^{-\abs{\alpha}(1-\varepsilon )}\bra{\xi}^{m-\abs{\alpha}(\frac{1}2-\varepsilon )-\abs{\beta}}
\int_{\abs{\zeta}\leq r \bra{\xi}}\int_{\abs{x-y}<\epsilon} \frac{(1+\abs{\xi}\abs{x-y})^{\abs{\alpha}/2+2\abs{\beta}} }{(1+\bra{\xi}^2 |x-y|^2)^N} \dd y \ddd\zeta,\\	
&\lesssim t^{-\abs{\alpha}(1-\varepsilon )}\bra{\xi}^{m-\abs{\alpha}(\frac{1}2-\varepsilon )-\abs{\beta}} \bra{\xi}^n
\int_{\abs{z}<\epsilon} (1+\bra{\xi}^2 |z|^2)^{\abs{\alpha}/2+2\abs{\beta}-N} \dd z\\
&\lesssim  t^{-\abs{\alpha}(1-\varepsilon )}\bra{\xi}^{m-\abs{\alpha}(\frac{1}2-\varepsilon )-\abs{\beta}},
\end{aligned}
\]
provided $N$ is large enough.

Suppose now that $k>\abs{\alpha}/2$. Recalling that $k'$ denotes the cardinality of the set of indices $j$ such that $\abs{a_j}=1$, set $k_0 = k - k'$, which is the cardinality of the set of $j$ such that $\abs{a_j}\geq 2$. We must have $2k_0 + k' \leq |\alpha|$, so therefore $2k-|\alpha| \leq k'$. Hence \eqref{eq:improvement} and \eqref{eq:improvement_2} yield
\begin{equation}\label{eq:improvement_3}
	\prod_{j=1}^ k \d^ {b_j}_{\xi}\d^{a_j}_y(\Phi(x,y,\xi))=\sum_{\abs{\gamma}\geq 2k-{\abs{\alpha}}}^{\abs{\alpha}} g_{\gamma}(x,y,\xi) (x-y)^{\gamma} \quad \text{with $\quad \abs{g_\gamma(x,y,\xi)}\lesssim \abs{\xi}^{k-\abs{\beta_2}}$.}
\end{equation}
This allows us to integrate by parts and reduce the problem of  estimating $\d^\beta_\xi R^I_{\alpha}$ to that of controlling an expression of the type
\[
	 \iint e^{i(x-y)\cdot\zeta}\,\partial_y^{\alpha_1} \chi(x-y)  g_\gamma(x,y,\xi)\,\d^{\beta_1}_{\xi}b_{\alpha,\alpha_2}(x,y,\xi,\zeta,s,t)\, \prod_{j=1}^{l}\d^{d_j}_\xi \Phi(x,y,\xi) \d^\gamma_{\zeta}\,e^{i\Phi(x,y,\xi)} \dd y \ddd\zeta,
\]
for $2k-\abs{\alpha}\leq \abs{\gamma}\leq \abs{\alpha}$. Integration by parts with the operator $L_\zeta$ defined in \eqref{eq:operator}, allows us to reduce the problem to control terms of the type
\[
\begin{aligned}
&  \iint \frac{\partial_y^{\alpha_1} \chi(x-y)g_\gamma(x,y,\xi)\,\d^{\beta_1}_{\xi}b_{\alpha,\alpha_2}(x,y,\xi,\zeta,s,t)\, \prod_{j=1}^{l}\d^{d_j}_\xi \Phi(x,y,\xi)}{(1+\bra{\xi}^2 |x-y|^2)^N}\\
&\qquad\qquad\times  \bra{\xi}^{|\kappa|}\d^{\gamma+\kappa}_{\zeta}\, e^{i\Phi(x,y,\xi)+i(x-y)\cdot\zeta} \dd y \ddd\zeta.
\end{aligned}
\]
Taking absolute values and using \eqref{eq:bs}, \eqref{bound}, \eqref{eq:improvement_3} and the fact that $k>\abs{\alpha}/2$ we obtain that this term is bounded by
\[
\begin{aligned}
&
t^{-\abs{\alpha}(1-\varepsilon )}\bra{\xi}^{m-\abs{\alpha}(1-\varepsilon )+(k-\abs{\gamma})-\abs{\beta}}\int_{\abs{\zeta}\leq  r \bra{\xi}}\int_{\abs{x-y}<\epsilon} \frac{ (1+\abs{\xi}\abs{x-y})^{2\abs{\beta_3}}}
{(1+\bra{\xi}^2 |x-y|^2)^N} \dd y \ddd\zeta,\\
&\lesssim  t^{-\abs{\alpha}(1-\varepsilon )}\bra{\xi}^{m-\abs{\alpha}(\frac{1}2-\varepsilon )-\abs{\beta}},
\end{aligned}
\]
provided $N$ is large enough. Thus, altogether this implies \eqref{eq:claim_2}.\\

We claim now that
\begin{equation}\label{eq:claim_3}
	\sup_{0<s<1}\sup_{x\in \R^n }\abs{\d^\gamma_\xi \d^\beta_x R^{I\!\!I}_\alpha (t,s,x,\xi)}\lesssim t^{-\abs{\alpha}(1-\varepsilon )}\bra{\xi}^{m-\abs{\alpha}(1-\varepsilon )-\abs{\gamma}}.
\end{equation}
As we did before, to simplify the exposition, we are going omit the estimates in the $x$-derivatives. To prove the estimates on $R_{\alpha}^{I\!\!I}(t,x,\xi)$ one defines
\[
\Psi(x,y,\xi,\zeta)=(x-y)\cdot\zeta+\Phi(x,y,\xi)= (x-y)\cdot(\nabla_x\phase(x,\xi)+\zeta)+\phase(y,\xi)-\phase(x,\xi).
\]
The assumptions \ref{partone} and \ref{parttwo} on the phase function $\phase$, that fact that we have chosen $\epsilon <r/8C_0$, $|x-y|<\epsilon$ in the support of $\chi$ and that we are in the region $|\zeta|\geq \frac{r}{2}\bra{\xi}$  imply that
\begin{equation*}\label{eq:rho}
\begin{aligned}
|\nabla_y\Psi| & =|-\zeta+\nabla_y\phase-\nabla_x\phase|\leq 2C_2(|\zeta|+\bra{\xi}), \quad \text{and} \\
|\nabla_y\Psi| & \geq |\zeta|-|\nabla_y\phase-\nabla_x\phase| \geq \frac{1}{2}|\zeta|+\left(\frac{r}{4}-C_0|x-y| \right)\bra{\xi}\geq C(|\zeta|+\bra{\xi}).
\end{aligned}
\end{equation*}
Arguing as before,  $R^{I\!\!I}_{\alpha}$ can be expressed as a finite linear combination of terms of the form
\[
	 \iint e^{i\Psi(x,y,\xi,\zeta)}\,\partial_y^{\alpha_1} \chi(x-y)  \prod_{j=1}^ k \d^{a_j}_y(\Phi(x,y,\xi))\, c_{\alpha,\alpha_2}(x,y,\xi,\zeta,s,t) \dd y \ddd\zeta,
\]
where $\alpha_1+\alpha_2+\alpha_3=\alpha$ and  $a_1+\ldots +a_k=\alpha_3$, with $\abs{a_j}\geq 1$ provided $\alpha_3\neq 0$, and
\[
	c_{\alpha,\alpha_2}(x,y,\xi,\zeta,s,t)=\brkt{1-g\left(\frac{\zeta}{\bra{\xi}}\right)} \d^{\alpha_2}_y a_t(y,\xi) \brkt{\d^\alpha \rho}\brkt{t\nabla_x\varphi(x,\xi)+st \zeta}).
\]
So, $c_{\alpha,\alpha_2}$ is supported in $\abs{\zeta}\geq \bra{\xi}$ and arguing as in \eqref{eq:bs}, one has that
\begin{equation}\label{eq:cs}
	\sup_{0<s<1}\abs{\d^\beta_x \d^\gamma_\xi \d^\nu_y c_{\alpha.\alpha_2}(x,y,\xi,\zeta,s,t)}\lesssim t^{-\abs{\alpha}(1-\varepsilon )}\bra{\xi}^{m-\abs{\alpha}(1-\varepsilon )-\abs{\gamma}}.
\end{equation}
Therefore $\d^\gamma_\xi R^{I\!\!I}_{\alpha}$ can be written as a finite linear combination of terms of the type
\[
	 \iint e^{i\Psi(x,y,\xi,\zeta)}\,\partial_y^{\alpha_1} \chi(x-y)  \prod_{j=1}^ k \d^{b_j}_\xi \d^{a_j}_y(\Phi(x,y,\xi)) \prod_{j=1}^ l \d^{d_j}_\xi \Phi(x,y,\xi)\, \d^{\gamma_2}_\xi c_{\alpha,\alpha_2}(x,y,\xi,\zeta,s,t) \dd y \ddd\zeta,
\]
For the differential operator defined via the expression $^tL_y=i|\nabla_y\Psi|^{-2}\sum_{j=1}^n (\partial_{y_j}\Psi) \partial_{y_j}$, induction shows that $L_y^N$ has the form
\begin{equation*}\label{EQ:LNy} L_y^N=\frac{1}{|\nabla_y\Psi|^{4N}}\sum_{|\nu|\leq N} P_{\nu,N}\partial_y^\nu,\quad \text{where} \quad P_{\nu,N}=\sum_{|\mu|=2N} c_{\nu\mu\lambda_j}(\nabla_y\Psi)^\mu \partial_y^{\lambda_1}\Psi\cdots \partial_y^{\lambda_N}\Psi,
\end{equation*}
$|\mu|=2N$, $|\lambda_j|\geq 1$ and $\sum_{M}^N |\lambda_j|+|\nu|=2N$. It follows from assumption \ref{parttwo} on $\phase$ that $|P_{\nu,N}|\leq C(|\zeta|+\bra{\xi})^{3N}.$ Now Leibniz's rule yields that  $\d^\gamma_\xi R^{I\!\!I}_{\alpha}$ can be written as a finite linear combination of terms of the type
\[
\begin{aligned}
& \iint e^{i\Psi(x,y,\xi,\zeta)} |\nabla_y\Psi|^{-4N} P_{\nu,N}(x,y,\xi,\zeta)\prod_{j=1}^ k \d^{a_j+c_j}_y\d^{b_j}_\xi \Phi(x,y,\xi) \prod_{j=1}^ l \d^{e_j}_y\d^{d_j}_\xi \Phi(x,y,\xi)\\
&\quad \times \d^{\nu_2}_y\d^{\gamma_2}_\xi c_{\alpha,\alpha_2}(x,y,\xi,\zeta,s,t) \partial_y^{\alpha_1+\nu_1} \chi(x-y) \dd y \ddd\zeta
\end{aligned}
\]
with $\abs{\nu}\leq N$.
These terms are bounded by  $t^{-\abs{\alpha}(1-\varepsilon )}\bra{\xi}^{m-\abs{\alpha}(1-\varepsilon )-\abs{\gamma}}$ times
\[
\begin{aligned}
&\int_{|\zeta|\geq \frac{r}{2}\bra{\xi}} \int_{|x-y|<\epsilon} (|\zeta|+\bra{\xi})^{-N} \bra{\xi}^{|\alpha|+\abs{\gamma}}\, \dd y \ddd\zeta & \lesssim\bra{\xi}^{|\alpha|+\abs{\gamma}} \int_{|\zeta|\geq \frac{r}{2}\bra{\xi}}|\zeta|^{-N}\ddd\zeta \lesssim \bra{\xi}^{|\alpha|+\abs{\gamma}+n-N}.
\end{aligned}
\]
That is
\[
	\abs{\d^\gamma_\xi R_{\alpha}^{I\!\!I}(t,x,\xi)}\lesssim t^{-\abs{\alpha}(1-\varepsilon )}\bra{\xi}^{m-\abs{\alpha}(1-\varepsilon )-\abs{\gamma}} \bra{\xi}^{|\alpha|+\abs{\gamma}+n-N},
\]
which implies \eqref{eq:claim_3} (with $\beta=0$) if we take $N\geq M+\abs{\gamma}+n$.

From these estimates it readily follows that the term $r(t,x,\xi)$ in the statement of Theorem \ref{left composition with pseudo} belongs to the class $L^\infty S^{m-M(1/2-\varepsilon )}_1$ with seminorms bounded by $t^{M\varepsilon}$. Demonstration of the fact that $r(t,x,\xi)\in S^{m-M(1/2-\varepsilon )}_{1,0}$ amounts to control the $x$ derivatives of $\d^\gamma_\xi \tilde{R}_{\alpha}(t,x,\xi)$ where $\tilde{R}_{\alpha}$ is as in \eqref{modified rest term}, but since the estimates involved are carried out in a similar way as in the case of $\sigma_{\alpha}(t, x,\xi)$, we shall leave the details of this verification to the interested reader. This concludes the proof of Theorem \ref{sec:proof_of_asymptotic}.

\begin{rem}
The fact that $r(t,x,\xi)\in L^\infty S^{m-M(1/2-\varepsilon )}_1$ is actually enough for proving our main Theorem $\ref{main}$. Indeed  by \ref{eq:DSS} in Theorem $\ref{linear_FIO}$, the $\mathrm{FIOs}$ with such amplitudes are bounded on $L^p$ spaces with $1\leq p\leq\infty$.

\end{rem}

\parindent 0pt

\begin{bibdiv}
\begin{biblist}

\bib{CM4}{book}{
   author={Coifman, Ronald R.},
   author={Meyer, Yves},
   title={Au del\`a des op\'erateurs pseudo-diff\'erentiels},
   series={Ast\'erisque},
   volume={57},
   publisher={Soci\'et\'e Math\'ematique de France},
   place={Paris},
   date={1978},

}
\bib{CM5}{article}{
   author={Coifman, Ronald R.},
   author={Meyer, Yves},
   title={Nonlinear harmonic analysis, operator theory and P.D.E},
   conference={
      title={Beijing lectures in harmonic analysis},
      address={Beijing},
      date={1984},
   },
   book={
      series={Ann. of Math. Stud.},
      volume={112},
      publisher={Princeton Univ. Press},
      place={Princeton, NJ},
   },
   date={1986},
   pages={3--45},

}
\bib{DSFS}{book}{
   author={Dos Santos Ferreira, David},
   author={Staubach, Wolfgang},
   title={Global and local regularity for Fourier integral operators on weighted and unweighted spaces},
   status={to appear in the Memoirs of the AMS},
   eprint={arXiv:1104.0234},
}
\bib{FS2}{article}{
   author={Fefferman, C.},
   author={Stein, E. M.},
   title={Some maximal inequalities},
   journal={Amer. J. Math.},
   volume={93},
   date={1971},
   pages={107--115},

}
\bib{FS}{article}{
   author={Fefferman, C.},
   author={Stein, E. M.},
   title={$H^{p}$ spaces of several variables},
   journal={Acta Math.},
   volume={129},
   date={1972},
   number={3-4},
   pages={137--193},

}

\bib{Gol}{article}{
   author={Goldberg, David},
   title={A local version of real Hardy spaces},
   journal={Duke Math. J.},
   volume={46},
   date={1979},
   number={1},
   pages={27--42},

}

\bib{G}{book}{
   author={Grafakos, Loukas},
   title={Classical and modern Fourier analysis},
   publisher={Pearson Education, Inc., Upper Saddle River, NJ},
   date={2004},

}
\bib{GP}{article}{
   author={Grafakos, Loukas},
   author={Peloso, Marco M.},
   title={Bilinear Fourier integral operators},
   journal={J. Pseudo-Differ. Oper. Appl.},
   volume={1},
   date={2010},
   number={2},
   pages={161--182},

}
\bib{GK}{article}{
   author={Grafakos, Loukas},
   author={Kalton, Nigel J.},
   title={The Marcinkiewicz multiplier condition for bilinear operators},
   journal={Studia Math.},
   volume={146},
   date={2001},
   number={2},
   pages={115--156},

}

\bib{GK}{article}{
   author={Grafakos, Loukas},
   author={Kalton, Nigel J.},
   title={Multilinear Calder\'on-Zygmund operators on Hardy spaces.},
   journal={Collect. Math.},
   volume={52},
   date={2001},
   number={2},
   pages={169--179},

}

\bib{GM}{article}{
   author={Grafakos, Loukas},
   author={Masty{\l}o, Mieczys{\l}aw },
   title={Analytic families of multilinear operators},
   journal={Preprint},
}

\bib{GT1}{article}{
   author={Grafakos, Loukas},
   author={Torres, Rodolfo H.},
   title={Multilinear Calder\'on-Zygmund theory},
   journal={Adv. Math.},
   volume={165},
   date={2002},
   number={1},
   pages={124--164},

}

\bib{H1}{article}{
   author={H{\"o}rmander, Lars},
   title={Pseudo-differential operators and hypoelliptic equations},
   conference={
      title={Singular integrals (Proc. Sympos. Pure Math., Vol. X, Chicago,
      Ill., 1966)},
   },
   book={
      publisher={Amer. Math. Soc.},
      place={Providence, R.I.},
   },
   date={1967},
   pages={138--183},

}
\bib{H2}{article}{
   author={H{\"o}rmander, Lars},
   title={Fourier integral operators. $\mathrm{I}$},
   journal={Acta Math.},
   volume={127},
   date={1971},
   number={1-2},
   pages={79--183},

}
\bib{KS}{article}{
   author={Kenig, Carlos E.},
   author={Staubach, Wolfgang},
   title={$\Psi$-pseudodifferential operators and estimates for maximal
   oscillatory integrals},
   journal={Studia Math.},
   volume={183},
   date={2007},
   number={3},
   pages={249--258},

}

\bib{MT}{article}{
   author={Miyachi, Akihiko},
   author={Tomita, Naohito},
   title={Calder\'on-Vaillancourt type theorem for bilinear pseudo-differential operators},
   status={To appear in the Indiana Univ. Math. Journal},
}

\bib{PS}{article}{
   author={Peloso, Marco M.},
   author={Secco, Silvia},
   title={Boundedness of Fourier integral operators on Hardy spaces},
   journal={Proc. Edinb. Math. Soc. (2)},
   volume={51},
   date={2008},
   number={2},
   pages={443--463},

}
\bib{RS}{article}{
   author={Rodr\'iguez-L\'opez, Salvador},
   author={Staubach, Wolfgang},
   title={Estimates for rough Fourier integral and pseudodifferential operators and applications to the boundedness of multilinear operators},
   journal={J. Funct. Anal.},
   volume={10},
   number={264},
   pages={2356–2385},
   year={2013}
}

\bib{RRS}{article}{
  author={Rodr\'iguez-L\'opez, Salvador},
  author={Rule, David J.},
  author={Staubach, Wolfgang},
  title={On the boundedness of certain bilinear oscillatory integral operators},
  journal={To appear in the Transactions of the AMS},
  year={2013}
}

\bib{SSS}{article}{
   author={Seeger, Andreas},
   author={Sogge, Christopher D.},
   author={Stein, Elias M.},
   title={Regularity properties of Fourier integral operators},
   journal={Ann. of Math. (2)},
   volume={134},
   date={1991},
   number={2},
   pages={231--251},

}
\bib{S0}{book}{
   author={Stein, Elias M.},
   title={Singular integrals and differentiability properties of functions},
   series={Princeton Mathematical Series, No. 30},
   publisher={Princeton University Press},
   place={Princeton, N.J.},
   date={1970},

}
\bib{S}{book}{
   author={Stein, Elias M.},
   title={Harmonic analysis: real-variable methods, orthogonality, and
   oscillatory integrals},
   series={Princeton Mathematical Series},
   volume={43},
   note={With the assistance of Timothy S. Murphy;
   Monographs in Harmonic Analysis, III},
   publisher={Princeton University Press},
   place={Princeton, NJ},
   date={1993},

}
\bib{SW}{book}{
   author={Stein, Elias M.},
   author={Weiss, Guido},
   title={Introduction to Fourier analysis on Euclidean spaces},
   note={Princeton Mathematical Series, No. 32},
   publisher={Princeton University Press},
   place={Princeton, N.J.},
   date={1971},

}
\bib{Taylor}{book}{
   author={Taylor, Michael E.},
   title={Tools for PDE},
   series={Mathematical Surveys and Monographs},
   volume={81},
   note={Pseudodifferential operators, paradifferential operators, and layer
   potentials},
   publisher={American Mathematical Society},
   place={Providence, RI},
   date={2000},

}
\bib{Trie}{book}{
   author={Triebel, Hans},
   title={Theory of function spaces},
   series={Monographs in Mathematics},
   volume={78},
   publisher={Birkh\"auser Verlag},
   place={Basel},
   date={1983},

}

\end{biblist}
\end{bibdiv}
\end{document}